\documentclass[10pt,oneside,reqno]{article}
\usepackage[margin=1.3in]{geometry}
\usepackage[title]{appendix}
\usepackage{titlesec}
\usepackage{amsmath,amsthm,amssymb,color,bm}
\usepackage[authoryear]{natbib}
\usepackage{booktabs}

\newcommand{\dd}{\,\mathrm d}
\newcommand{\calW}{\mathcal W}
\newcommand{\Dkl}{D}
\newcommand{\Ip}{I_p}

\usepackage{tikz}
\RequirePackage{amsfonts,mathrsfs,dsfont,mathtools,thmtools}
\RequirePackage[colorlinks,citecolor=blue,urlcolor=blue,linkcolor=blue]{hyperref}
\usepackage[capitalize]{cleveref}
\RequirePackage{graphicx}

\crefname{equation}{}{}
\crefformat{equation}{\textup{(#2#1#3)}}
\crefrangeformat{equation}{\textup{(#3#1#4)--(#5#2#6)}}
\crefdefaultlabelformat{#2\textup{#1}#3}
\crefname{lemma}{Lemma}{Lemmas}
\crefname{page}{p.}{pp.}
\usepackage[normalem]{ulem}
\usepackage{bbm}
\usepackage{enumitem}
\usetikzlibrary{arrows, automata}
\usetikzlibrary{calc}
\usetikzlibrary{positioning}
\usepackage{cancel}

\numberwithin{equation}{section}
\allowdisplaybreaks[4]

\theoremstyle{plain}
\newtheorem{theorem}{Theorem}[section]
\newtheorem{assumption}{Assumption}[section]
\newtheorem{proposition}{Proposition}[section]
\newtheorem{lemma}{Lemma}[section]
\newtheorem{corollary}{Corollary}[section]

\theoremstyle{definition}

\newtheorem{remark}{Remark}[section]

\makeatletter

\newcount\minute
\newcount\hour
\newcount\hourMins
\def\now{%
\minute=\time%
\hour=\time \divide \hour by 60%
\hourMins=\hour \multiply\hourMins by 60%
\advance\minute by -\hourMins%
\zeroPadTwo{\the\hour}:\zeroPadTwo{\the\minute}%
}
\def\zeroPadTwo#1{\ifnum #1<10 0\fi#1}

\renewcommand{\cite}{\citet}

\def\^#1{\ifmmode {\mathaccent"705E #1} \else {\accent94 #1} \fi}
\def\~#1{\ifmmode {\mathaccent"707E #1} \else {\accent"7E #1} \fi}

\edef\-#1{\noexpand\ifmmode {\noexpand\bar{#1}} \noexpand\else \-#1\noexpand\fi}
\def\>#1{\vec{#1}}
\def\.#1{\dot{#1}}

\def\atop{\@@atop}
\def\*#1{\mathscr{#1}}

\renewcommand{\leq}{\leqslant}
\renewcommand{\le}{\leqslant}
\renewcommand{\geq}{\geqslant}
\renewcommand{\ge}{\geqslant}
\newcommand{\eps}{\varepsilon}

\newcommand{\IE}{\mathbbm{E}}
\newcommand{\IP}{\mathbbm{P}}
\newcommand{\Var}{\mathop{\mathrm{Var}}\nolimits}

\newcommand{\e}{{\mathrm{e}}}
\newcommand{\IR}{\mathbb{R}}

\def\ben#1{\begin{equation}#1\end{equation}}

\def\besn#1{\begin{equation}\begin{split}#1\end{split}\end{equation}}

\def\norm#1{\Vert#1\Vert}

\def\abs#1{\vert#1\vert}

\def\mid{\vert}

\def\beqn#1\eeqn{\begin{align}#1\end{align}}
\def\beq#1\eeq{\begin{align*}#1\end{align*}}

\usepackage{latexsym}
\usepackage{amscd}
\usepackage{epsf}

\DeclareMathOperator*{\argsup}{argsup}
\def\E{{\IE}}
\def\P{{\IP}}

\newcommand{\mcl}[1]{\mathcal{#1}}

\renewcommand\section{\@startsection {section}{1}{\z@}%
{-3.5ex \@plus -1ex \@minus -.2ex}%
{1.3ex \@plus.2ex}%
{\center\small\sc\mathversion{bold}}}
\def\subsection#1{\@startsection {subsection}{2}{0pt}%
{-3.5ex \@plus -1ex \@minus -.2ex}%
{1ex \@plus.2ex}%
{\bf\mathversion{bold}}{#1}}

\def\subsubsection#1{\@startsection{subsubsection}{3}{0pt}%
{\medskipamount}%
{-10pt}%
{\normalsize\itshape}{\kern-2.2ex. #1.}}

\def\blfootnote{\xdef\@thefnmark{}\@footnotetext}

\makeatother

\begin{document}
\author{Xiao Fang$^1$, Song-Hao Liu$^2$, Xiaolin Wang$^1$}
\date{\it $^1$The Chinese University of Hong Kong, $^2$Dalian University of Technology \\[2ex]  }
\title{Mixing Time of Conditional Two Star Exponential Random Graphs}
\maketitle%%%%%%%%%%%%%%%%%%%%%%%%%%%%%%%%%%%%%%%%%%%%%%
%% Please use \tableofcontents for articles %%
%% with 50 pages and more                   %%
%%%%%%%%%%%%%%%%%%%%%%%%%%%%%%%%%%%%%%%%%%%%%%
%\tableofcontents
Two star exponential random graph models (ERGMs) are an interesting special case of both general ERGMs and mean-field Ising models. In this paper, we study two star ERGMs conditioning on the edge density $p$. We begin with an analytic characterization of the replica symmetric region, where the conditional model is close in cut distance to the Erd\H{o}s--R\'enyi random graph $G(n,p)$. We prove that this region is twice as large as the corresponding region for the unconditional model. To study refined properties of the conditional model, we then analyze the global Kawasaki algorithm for sampling from it. Within the replica symmetric region, and under the additional condition that $4\beta p(1-p)<0.5$ when $|p-1/2|\lesssim 0.4632$, where $\beta$ is the model parameter, we prove metastable fast mixing of the Kawasaki algorithm. As corollaries, we obtain a weak Poincar\'e inequality and use it to deduce concentration inequalities of optimal order for conditional subgraph counts.
The additional condition $4\beta p(1-p)<1/2$ if $|p-1/2|\lesssim 0.4632$ comes from our proof technique of contractive coupling. This bottleneck did not appear in previous studies applying the contractive coupling technique to unconditional ERGMs.
%The additional contraction condition is likely a proof artifact and identifies the present limitation of available coupling techniques.

\section{Introduction}
\label{sec:intro}
%\red{to change $p$ to $p$? (watch out for $W_p$ and $(p)^2$); to use $p\mathbf{1}$ for the constant graphon?}
Exponential random graph models (ERGMs), which specify network laws through selected graph statistics, are frequently used as parametric models in network analysis.
They were suggested for directed networks by \cite{holland1981exponential} and for undirected networks by \cite{frank1986markov}.
A general development of the models is presented in \cite{wasserman1994social}. 
In this paper, we consider undirected dense ERGMs, where the resulting graphs generally contain $\asymp n^2$ edges, with
$n$ representing the number of vertices.
We do not consider sparse ERGMs (models that have $\asymp n$ edges), which are harder to study than their dense counterparts. For important discoveries about sparse ERGMs, see, for example, \cite{chatterjeeDembo2016} and \cite{Nicholas2024}.
%(see also \cite{chakraborty2024,Nicholas2024} for more details). more reference?
Alternative model specifications are also studied in the literature. For example, \cite{snijders2006new} proposed curved exponential random graph models which include statistics such as geometrically weighted degree counts.

Although there is a large literature on (unconditional) ERGMs (see, e.g., \cite{bhamidi2011mixing,chatterjee2013estimating,reinert2019,mukherjee2023statistics,bianchi2024limit,fang2024,winstein2025,winstein2025quantitative}), conditional models given the observed number of edges are less well studied. 
It is of interest to study the conditional model due to its relevance in statistical hypothesis testing problems (\cite{bresler2018optimal}) and in higher-order phase transitions in statistical mechanics models (\cite{Bauerschmidt2025}).
%, we consider two star ERGMs with a fixed edge density $\tilde p\in [\delta, 1-\delta]$.

We focus on the two star ERGM defined as follows.  
Let $\mathcal{G}_n$ denote the space of all simple graphs\footnote{All graphs considered herein are undirected, without self-loops or multiple edges.} on $n$ labeled vertices. For a graph $G \in \mathcal{G}_n$, let $\mathcal{V}(G)$ and $\mathcal{E}(G)$ denote its vertex set and edge set, respectively, with $|\mathcal{V}(G)| = n$ and $e(G)=|\mathcal{E}(G)|$ denoting the number of edges. 
Let $\alpha\in \IR$ and $\beta\geq 0$ be two real parameters. The two star ERGM assigns to each $G\in \mathcal G_n$ the probability 
\ben{\label{eq:twostarmodel2}
\mu_{\alpha, \beta}(G)=\frac{1}{Z_{n}(\alpha, \beta)}\exp\left(2\alpha e(G)+\frac{2\beta t(G)}{n}\right),
}
where $t(G)$ denotes the number of two stars (the graph with three vertices and two edges connecting them) in $G$ and $Z_{n}(\alpha, \beta)$ is the normalizing constant. The general ERGM is defined by adding more scaled subgraph counts in the exponent in \cref{eq:twostarmodel2}; we refer to \cite{chatterjee2013estimating} for details.
%Let $e(G)$ denote the number of edges and let $t(G)$ denote the number of two stars in a graph $G$.  The model is defined by
%\begin{align}\label{eq:twostarmodel}
    %\mu_{\beta}(G)=\frac{1}{Z_{n}(\alpha,\beta)}\exp\left(2\alpha  e(G)+\frac{2\beta t(G)}{n}\right).
%\end{align}
Although our method may extend to general ERGMs, we have chosen to focus on this special case where we can provide more definite answers. 
The two star ERGMs showcase the typical result we expect for general ERGMs and entail many of the technical difficulties. Besides, it is also an interesting special case of mean-field Ising models. 

We begin by studying the macroscopic behavior of the conditional two star ERGM with given edge density $p$.  Using the result of \cite{chatterjee2011large}, \cite{dembo2018large} and \cite{kenyon2017asymptotics} gave a variational characterization of the conditional model as follows.
Let \(\mathcal W\) be the space of symmetric measurable functions from \([0,1]^2\) to  \([0,1]\), and let
\[
       {\mathcal W}_{p}
        =
        \left\{W\in\mathcal W:
        \int_{[0,1]^2}W(x,y)\,dx\,dy=p
        \right\}.
\]
These results show that the conditional model is close in cut distance to the set of graphon maximizers (see the precise definitions in \cref{sec:pre})
\ben{\label{eq:variational}
W_p^*:=\argsup_{W\in \mcl W_p} \mathcal F_\beta(W),\quad \mathcal F_\beta(W):=\beta t(W)-I_p(W),
}
where
\ben{\label{eq:tW}
t(W)=\int_0^1 \big[\int_0^1 W(x,y)\dd y \big]^2\dd x,
}
\[
\Ip(W)=
\frac12\int_{[0,1]^2}\Dkl(W(x,y)\|p)\dd x\dd y,
\]
and
\[
\Dkl(q\|p)
=
q\log\frac{q}{p}
+
(1-q)\log\frac{1-q}{1-p},
\]
with the usual convention \(0\log 0=0\). 

We say that the model is in the \emph{replica symmetric region} following the terminology of \cite{chatterjee2013estimating} if the variational problem \cref{eq:variational} is maximized at the constant function $p\mathbf{1}$, where $\mathbf{1}(x,y)\equiv 1$. This means that, at first order, the conditional model is close to the Erd\H{o}s--R\'enyi random graph $G(n,p)$, in which each edge is present independently with probability $p$.
\cite[Section~4.1]{zhu2017asymptotic} proved that there is a critical value $c_p$ such that when $\beta<c_p$, the maximizer of \cref{eq:variational} is unique and is a constant and when $\beta>c_p$, the variational problem has a maximizer that is not constant. However, they did not obtain the value of $c_p$ except when $p=1/2$. Our first main result is an analytic characterization of the replica symmetric region.

\begin{proposition}
\label{prop:1}
Let \(p\in(0,1)\). Let
\[
c_p
:=
\begin{cases}
\dfrac{\log\frac{p}{1-p}}{2p-1}, & p\ne \dfrac12, \\[1.2em]
2, & p=\dfrac12.
\end{cases}
\]
\begin{enumerate}
\item If \(\beta \le c_p\), then the constant graphon \(W\equiv p\)
is the unique maximizer of \(\mathcal F_\beta\) over \(\mathcal W_p\).

\item If \(\beta >c_p\), then the constant graphon \(W\equiv p\) is not a maximizer of \(\mathcal{F}_{\beta}\) over \(\calW_p\).
\end{enumerate}
\end{proposition}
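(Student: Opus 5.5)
The idea is to bound $t(W)-p^2$ by a spectral quantity that the symmetry of $W$ controls, and to compare $\Ip(W)$ with a quadratic via the sharp sub-Gaussian constant of a Bernoulli variable. Write $K=W-p$ and $d(x)=\int_0^1 W(x,y)\dd y$, so $d-p=K\mathbf 1$, where $K$ is viewed as a symmetric Hilbert--Schmidt operator on $L^2[0,1]$. Since $\int d=p$,
\[
\mathcal F_\beta(W)-\mathcal F_\beta(p)=\beta\|K\mathbf 1\|_2^2-\tfrac12\int_{[0,1]^2}\Dkl(W(x,y)\|p)\dd x\dd y .
\]

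\emph{Step 1 (spectral bound).} Expand $\mathbf 1=\sum_i a_i\phi_i+r$ in the eigenbasis of $K$, with $\sum_i a_i^2\le 1$. The constraint $\int K=0$ reads $\sum_i\lambda_i a_i^2=0$. Then $\|K\mathbf 1\|^2=\sum_i\lambda_i^2a_i^2$. Write $P=\sum_{\lambda_i>0}\lambda_i a_i^2=\sum_{\lambda_i<0}|\lambda_i|a_i^2$. This gives $\sum\lambda_i^2a_i^2\le(\lambda_+ +|\lambda_-|)P$ and $P\le\lambda_+|\lambda_-|/(\lambda_++|\lambda_-|)$. Hence
\[
\|K\mathbf 1\|^2\le\lambda_+|\lambda_-|\le\tfrac12(\lambda_+^2+\lambda_-^2)\le\tfrac12\|K\|_{L^2}^2 ,
\]
where $\lambda_+$ and $\lambda_-$ are the extreme positive and negative eigenvalues. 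This factor $1/2$ is where symmetry enters, and it is the source of the ``twice as large'' phenomenon.

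\emph{Step 2 (entropy bound).} I will use the sharp inequality $\Dkl(q\|p)\ge c_p(q-p)^2$ for all $q\in[0,1]$. This is the optimal sub-Gaussian constant of a Bernoulli$(p)$ variable (Kearns--Saul). For $p\neq 1/2$ equality holds exactly at $q\in\{p,1-p\}$, and for $p=1/2$ only at $q=p$. This is a one-variable calculus check, namely minimizing $\Dkl(q\|p)/(q-p)^2$, and I expect it to be the most technical routine step.

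Combining the two steps gives
\[
\mathcal F_\beta(W)-\mathcal F_\beta(p)\le\beta\|K\mathbf 1\|^2-\tfrac{c_p}{2}\|K\|^2\le(\beta-c_p)\|K\mathbf 1\|^2\le0 .
\]

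\emph{Uniqueness in part 1.} If $\beta<c_p$, equality forces $K\mathbf 1=0$, and then $\int\Dkl(W\|p)=0$, so $W\equiv p$. If $\beta=c_p$, equality requires equality in Step 2 almost everywhere. For $p=1/2$ this gives $K=0$. For $p\ne1/2$ it forces $K\in\{0,1-2p\}$, which together with $\int K=0$ again gives $K=0$.

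\emph{Part 2 ($\beta>c_p$).} Here I will exhibit an explicit competitor.

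For $p\ne 1/2$, take a set $A$ of small measure $a$ and its complement $B$. Put value $q_1=p$ on $A\times A$ and $1-p$ on $A\times B$. On $B\times B$ put the value $q_3=p-2a(1-2p)+O(a^2)$ determined by $\int W=p$. Then the degrees are $d_A=1-p+O(a)$ and $d_B=p-a(1-2p)+O(a^2)$, so $t(W)-p^2=a(1-2p)^2+O(a^2)$. The entropy cost is
\[
\tfrac12\cdot 2a\,\Dkl(1-p\|p)+O(a^2)=a(1-2p)^2c_p+O(a^2).
\]
Hence $\mathcal F_\beta(W)-\mathcal F_\beta(p)=a(1-2p)^2(\beta-c_p)+O(a^2)>0$ for small $a$.

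For $p=1/2$, take instead $W=p+\eps(h(x)+h(y))$ with $\int h=0$ and $h$ bounded. A second-order expansion gives
\[
\mathcal F_\beta(W)-\mathcal F_\beta(p)=\eps^2\|h\|^2\Bigl(\beta-\tfrac1{2p(1-p)}\Bigr)+O(\eps^3)=\eps^2\|h\|^2(\beta-2)+O(\eps^3),
\]
which is positive for small $\eps$.

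The main care points are the sharp constant in Step 2 and the equality analysis at $\beta=c_p$.
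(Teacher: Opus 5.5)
Your proof is correct and follows the paper's skeleton: the sharp bound $D(q\|p)\ge c_p(q-p)^2$ (Lemma~\ref{lem:sharp}), the inequality $\|K\mathbf 1\|_{L^2}^2\le\frac12\|K\|_{L^2}^2$, the same equality analysis at $\beta=c_p$, and a two-block competitor. The genuine difference is how you get the factor $\frac12$.

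The paper writes $U(x,y)=g(x)+g(y)+R(x,y)$, where $R$ has vanishing row and column integrals. This gives the exact identity $\int U^2=2\int g^2+\int R^2$ without operator theory. It also shows that equality holds precisely for additive perturbations $g(x)+g(y)$, which is the form of your $p=1/2$ competitor. Your spectral argument needs the spectral theorem for the Hilbert--Schmidt operator $K$. In return it gives the finer intermediate bound $\|K\mathbf 1\|_{L^2}^2\le\lambda_+|\lambda_-|$, and it makes visible that the constraint $\langle\mathbf 1,K\mathbf 1\rangle=0$ forces $\mathbf 1$ to straddle the positive and negative spectrum.

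Your heuristic that the $\frac12$ is ``where symmetry enters'' is slightly off: it comes from the zero-mean constraint together with symmetry. Symmetry alone does not suffice, since both the symmetric $K\equiv c$ and the centred non-symmetric $K(x,y)=g(x)$ have $\|K\mathbf 1\|_{L^2}=\|K\|_{L^2}$.

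For Step~2, the equality set $\{p,1-p\}$ that you use at $\beta=c_p$ is not part of the sub-Gaussian citation. The paper obtains it from the sign pattern of $F'$ for $F(q)=D(q\|p)-c_p(q-p)^2$, using $F'(p)=F'(1/2)=F'(1-p)=0$ and the two inflection points of $F$.

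In part~2 you specialise the paper's block graphon to $q=1-p$. This means you need only the explicit identity $D(1-p\|p)=c_p(1-2p)^2$, rather than the characterisation $c_p=\inf_{q\ne p}D(q\|p)/(q-p)^2$ that the paper invokes. The price is a separate second-order argument at $p=1/2$, where that infimum is approached only as $q\to p$. The paper instead treats all $p$ uniformly with a near-optimal $q$.
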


The proof of \cref{prop:1} is given in \cref{sec:proof-main}. Our key observation is
\[
c_p=\inf_{\substack{q\in[0,1]\\ q\ne p}}
\frac{D(q\|p)}{(q-p)^2}.
\]
For \(W\in\mathcal W_p\), this yields
\[
\mathcal F_\beta(W)-\mathcal F_\beta(p\mathbf{1})
\le
(\beta-c_p)\int_0^1
\left(\int_0^1(W(x,y)-p)\,\dd y\right)^2
\dd x,
\]
which leads to optimality of the constant graphon and uniqueness when $\beta\le c_p$;
when $\beta>c_p$, a small two-block perturbation preserving the edge density
gives a strictly larger value than $\mathcal F_\beta(p\mathbf 1)$.

To contrast the above result, we compare it with the replica symmetric
region of the unconditional model.  We show in Appendix~\ref{others} that,
for a prescribed density \(p\ne1/2\), there exists a parameter
\(\alpha\) for which the constant graphon \(p\mathbf{1}\) is the unique global
maximizer of the unconditional variational problem (see \cref{eq:variationUnconditional} below) if and only if
\[
    \beta<\frac{c_p}{2}.
\]
At \(p=1/2\), the boundary value \(\beta=1=c_p/2\) also has the constant
graphon \(\frac{1}{2}\mathbf{1}\) as its unique maximizer.
The factor-of-two separation between the two thresholds has a structural
origin.  In the unconditional variational problem, the edge density is free
to fluctuate, so constant perturbations \(W\equiv q\), which change the
overall density from \(p\) to \(q\), are admissible.  Even after \(\alpha \)
is chosen so that \(W\equiv p\) is the candidate homogeneous phase, this
global-density mode $q$ may become energetically favorable once
\(\beta>c_p/2\).  Conditioning on the number of edges removes precisely
this unstable direction: every admissible perturbation \(U=W-p\) must have
zero average, and the two star gain can arise only through fluctuations of
the degree profile.  The corresponding entropy cost is strong enough to
stabilize the constant graphon all the way up to \(\beta=c_p\).

Consequently, in the intermediate region
\[
    \frac{c_p}{2}<\beta\leq c_p,
\]
the conditional model remains replica symmetric, whereas no choice of \(\alpha \) makes the corresponding
unconditional model concentrate around the graphon \(p\mathbf{1}\).  Thus
conditioning does not merely select a fixed-density sector of the
unconditional model; it genuinely changes the phase diagram and creates a
strictly larger homogeneous phase.  This distinction also suggests that
methods that pass through the unconditional model cannot access the full
replica symmetric region of the conditional model.  See
\cref{fig:1}.

%Note that the region $\beta<c_p$ is larger than the counterpart in the unconditional model where given $p$, we can find $\beta$ such that the constant graphon $p$ is the unique global maximizer of the unconditional ERGM model (the area below the green line in \cref{fig:1}). This shows a different phase transition curse between the conditional and unconditional model.

\begin{figure}[ht]
    \centering
    \includegraphics[width=0.5\textwidth]{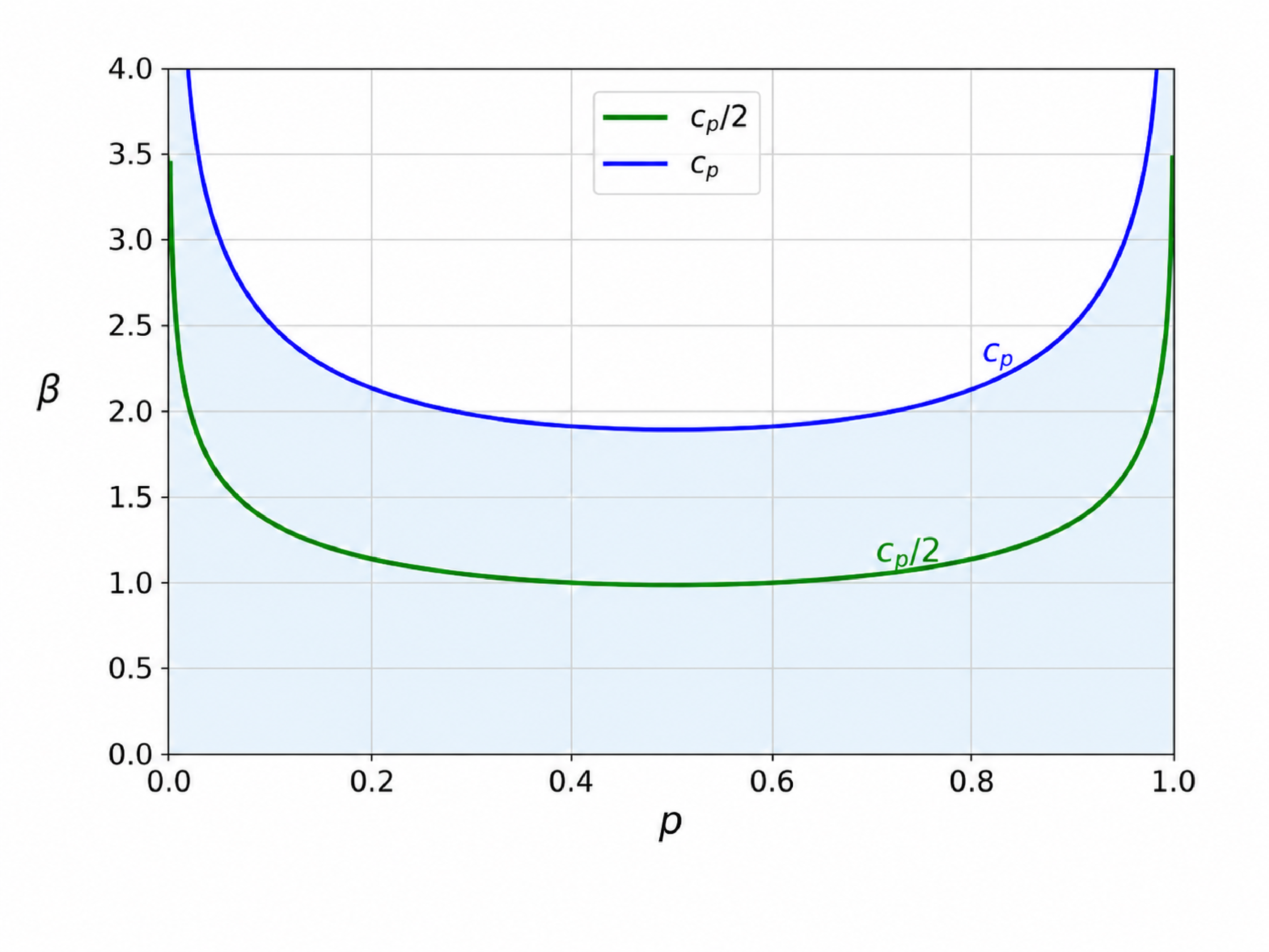}
    \caption{The replica symmetric region of the conditional model (below the blue line) is twice as large as the corresponding region for the unconditional model (below the green line).}
    \label{fig:1}
\end{figure}

As an illustration of the previous remark, \cite{fang2025conditional} obtained a conditional central limit theorem (CLT), which describes Gaussian fluctuations after normalization, for two star counts given the number of edges in general ERGMs.  Their method, however, relies on properties of the unconditional model and works at best up to the green line in \cref{fig:1}.  We instead analyze the conditional model directly.  Although we do not pursue this direction here, the results developed below may help extend the conditional CLT to a larger parameter region.

Large-deviation results describe the first-order behavior of the model, but they do not determine its microscopic law; indeed, the relevant total-variation distance may tend to one as the number of vertices grows.  For the unconditional model, microscopic properties such as concentration inequalities and CLTs can be obtained through Glauber dynamics.  Under a fixed-edge constraint, single-edge Glauber updates are unavailable.  A natural sampler is therefore the global Kawasaki dynamics, which proposes deleting one present edge and adding one absent edge, followed by a heat-bath update conditional on the remaining edges (see \cref{sec:pre}).  Our second main result proves metastable fast mixing of this dynamics in a subregion of the replica symmetric phase.

\begin{theorem}[\cref{coupling}, informal version]\label{thm:1informal}
Consider the two star ERGM \cref{eq:twostarmodel2} conditioned to have edge density $p$.  Assume $0\leq \beta<c_p$ and also $4\beta p(1-p)<1/2$ when $|p-1/2|\lesssim 0.4632$.  From every initial state in a regular set of overwhelming stationary probability, the Kawasaki dynamics mixes in $O(n^2\log n)$ steps, up to an exponentially small error.
\end{theorem}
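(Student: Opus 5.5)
We prove the informal statement by a contractive (path) coupling restricted to a regular set, as in the unconditional analyses of \cite{bhamidi2011mixing}, now adapted to Kawasaki moves. Concretely, we define the regular set $\mathcal R_\eps$ as the set of graphs with exactly $\lfloor p\binom n2\rfloor$ edges whose degree profile is uniformly close to constant, $\max_v |d_v/n - p|\le \eps$. The first task is to show $\mu(\mathcal R_\eps^c)\le e^{-cn}$ under the conditional measure. For this we use \cref{prop:1}. Since $\beta<c_p$, the constant graphon is the unique maximizer, and the stability bound $\mathcal F_\beta(W)-\mathcal F_\beta(p\mathbf 1)\le(\beta-c_p)\int_0^1(\int_0^1 (W-p)\,\dd y)^2\dd x$ gives a quantitative gap in degree-profile $L^2$ distance. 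The large deviation principle of \cite{chatterjee2011large,dembo2018large} then gives exponential smallness of an $L^2$ degree deviation. To upgrade this to $L^\infty$ control of single degrees, we condition on the rest of the graph: the row of vertex $v$ has an approximately product (tilted binomial) law, so Hoeffding plus a union bound finishes.

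Next we analyze one Kawasaki step. A step picks a present edge $e$ and an absent edge $f$ uniformly and resamples the pair $\{e,f\}$ from its conditional law given the other edges. Only the swap and no-swap states are compatible with the edge count, so the swap probability is a logistic function of $\frac{2\beta}{n}(\text{two-star change})$. That change equals $\frac{2\beta}{n}$ times the difference of the endpoint degree sums of $e$ and $f$, up to $O(1/n)$. We then build a path coupling between two graphs $X,Y$ differing by a single swap, that is, at two edges $a\in X\setminus Y$ and $b\in Y\setminus X$, using Hamming distance. Under identical proposals we compute the expected change of distance. The two disagreements are repaired with probability about $2/(p(1-p)n^4)\cdot$(order one). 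New disagreements arise when the proposal touches an edge sharing a vertex with $a$ or $b$: the swap probabilities then differ by at most $\frac{2\beta}{n}\sup\varphi'$, where $\varphi'\le 1/4$ and on $\mathcal R_\eps$ is close to $p(1-p)$ evaluated at the typical logistic argument. Summing over the $\approx 4n$ such edges yields a contraction factor $1-\frac{c}{n^2}(1-\gamma)$. Here $\gamma$ is essentially $4\beta p(1-p)\cdot$(a combinatorial factor) together with a term comparing the constraint to $c_p$. The contraction condition $\gamma<1$ is where the hypothesis $4\beta p(1-p)<1/2$ enters. For $p$ near $0$ or $1$ the condition $\beta<c_p$ already forces it, which is why the extra hypothesis is needed only for $|p-1/2|\lesssim 0.4632$, namely where $4c_pp(1-p)\ge 1/2$.

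Finally we assemble the pieces by the standard metastable argument. Starting in $\mathcal R_\eps$, the chain stays in $\mathcal R_{2\eps}$ for $e^{cn}$ steps except with exponentially small probability. This follows from the stationary bound together with a drift/martingale estimate on degrees: each step changes at most four degrees by one, and under the dynamics each degree is pulled back toward $pn$ because $\beta<c_p$. Within that window, path coupling along geodesics, whose intermediate states also lie in the good set after mild enlargement of $\eps$, gives distance contraction. Since the initial distance is $O(n^2)$, $O(n^2\log n)$ steps suffice for coalescence, giving total variation $\le n^{-c}+e^{-cn}$.

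The main obstacle is the one-step contraction computation. The Kawasaki move couples two far-apart edges, and the mismatch of the proposed pairs must be handled carefully. The right choice is probably a nontrivial pairing of proposals: when $X$ proposes $(e,a)$, let $Y$ propose $(e,b)$. With this pairing the constant in $\gamma$ becomes the claimed $1/2$ threshold rather than $1$. I would try this pairing first and optimize the disagreement counting from there.
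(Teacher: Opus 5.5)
Your overall architecture matches the paper's: an exponentially likely regular set, confinement by inward drift, a one-step contraction that pairs the disagreement edges, and a stopped path-coupling bound. However, the step that produces the regular set fails as written. On the slice $\Omega_k$, conditioning on all edges not incident to $v$ determines $\deg(v)=k-\sum_{f:\,v\notin f}X_f$ exactly. The row of $v$ is then a tilted law on subsets of a fixed size, so Hoeffding gives no information about the degree, and the $L^2$ degree bound from the large deviation principle cannot be upgraded to an $L^\infty$ bound this way.

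The missing idea is the cavity switching of Appendix~\ref{sec:degree-to-local}. Inside a cut ball, the paper compares the Gibbs weights of the slices $\{\deg(u)=d\}$ and $\{\deg(u)=d_*\}$ through fixed-edge switchings that move $|d-d_*|$ edges between the row of $u$ and the bulk.
\begin{itemize}
\item Cut-metric control evaluates the average energy change as $\beta n(q-p)^2$.
\item The forward/reverse switching counts give $-nD(q\|p)$. Here the bulk compensation forced by the edge constraint supplies the term $-n(q-p)\log\frac{1-p}{p}$, which turns the row entropy $n\{h(q)-h(p)\}$ into a relative entropy.
\end{itemize}
The resulting rate $D(q\|p)-\beta(q-p)^2$ is bounded below for $|q-p|\ge\varepsilon$ precisely because $\beta<c_p$; this is where the replica symmetric hypothesis enters at the level of single degrees. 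Appendix~\ref{sec:GPT1} shows this rate can be non-monotone and that a high-degree vertex can be metastable. So this is a genuine large-deviation estimate, not a product-measure concentration fact.

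Your remaining steps are repairable, but not as you state them.
\begin{itemize}
\item \textbf{Repair rate.} Your pairing at the end is the paper's coupling when $\rho=1$, but the repair probability is not of order $n^{-4}$. There are about $N$ repairing proposals: delete $a$ and add any common absent edge, or delete any common present edge and add $b$. Each is proposed with probability $1/(k(N-k))$ and accepted with probability about $1/2$, giving a repair rate of about $\frac{1}{2Np(1-p)}$.
\item \textbf{Damage rate.} The common proposals with an edge adjacent to $a$ or $b$ number about $8nNp(1-p)$. Each has acceptance discrepancy at most $\beta/(2n)$, using the crude logistic slope bound $1/4$, not $p(1-p)$. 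This gives a damage rate of at most $4\beta/N$.
\item \textbf{Net contraction.} The net change $-\frac{1}{Np(1-p)}\bigl(\frac12-4\beta p(1-p)-O(\varepsilon)\bigr)$ is exactly the stated threshold; no term involving $c_p$ enters your $\gamma$.
\item \textbf{Geodesics.} The paper runs this count for arbitrary pairs in $\Gamma_{p,\varepsilon}\times\Gamma_{p,\varepsilon}$, drawing $e_3,e_4$ uniformly from the difference sets. This avoids your unproved claim that geodesics between regular states stay in an enlarged regular set.
\item \textbf{Confinement.} The inward drift does not follow from $\beta<c_p$ alone. With only uniform control of the other degrees, the worst-case neighbour contributions must be dominated by a Dobrushin-type margin, and the paper uses the same condition $8\beta p(1-p)<1$ there. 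It remarks that reaching all of $\mathscr R_{\mathrm{RS}}$ might require also tracking cut-metric closeness along the trajectory.
\end{itemize}
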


The reason why it is necessary to study metastable mixing is that the Kawasaki algorithm may be trapped at certain points for an exponentially long time. See Appendix~\ref{sec:GPT1}.
However, these points form a set with exponentially small probability (see \cref{coupling}) and metastable mixing suffices for all purposes. This is similar to the result of \cite{bresler2024metastable} for the unconditional model. See \cite{winstein2025quantitative}, who used metastable mixing to prove the CLT for the unconditional ERGM in the supercritical region.

The proof of \cref{thm:1informal} is by a basin-restricted contraction argument. This approach has been successfully used by \cite{bhamidi2011mixing} and \cite{bresler2024metastable} to prove optimal results for unconditional ERGMs. However, additional technicality and bottleneck arise for the conditional model as we explain below. We first
identify the regular basin \(\Gamma_{p,\varepsilon}\), on which every local
edge density is uniformly close to \(p\).  Combining the graphon-scale
concentration estimate with the cavity and switching argument, we show in
\cref{le-1} that the conditional measure assigns all but
exponentially small mass to \(\Gamma_{p,\varepsilon}\).
The second ingredient is dynamical stability of this basin.  Near either
boundary of \(\Gamma_{p,\varepsilon}\), the extremal local edge densities
have a uniform drift pointing back toward \(p\).  \cref{co-1} shows that a chain started in a smaller core remains in the
regular basin for exponentially long times and is repeatedly driven away
from its boundary.
The third ingredient is a local contraction mechanism.
Using a coupling argument, we show in \cref{le-5} the one-step contraction in Hamming distance of Kawasaki paths starting from two different states inside
\(\Gamma_{p,\varepsilon}\) under the additional assumption that $4\beta p(1-p)<1/2$ when $|p-1/2|\lesssim 0.4632$. 
Finally, the stopped path-coupling estimate
\cref{le-21} balances this contraction against the exponentially unlikely
escape event to give the claimed
\(O\!\left(n^2\log(n^2/\delta)\right)\) metastable mixing bound.

The additional condition $4\beta p (1-p)<0.5$ in \cref{thm:1informal} corresponds to the dotted red line in \cref{fig:2} (which is optimal for those $p$ sufficiently close to 0 or 1) and comes from our proof technique of contractive coupling (see \cref{le-5}). 
For \(p\) sufficiently close to \(0\) or \(1\), this condition is implied
by the replica symmetric condition and therefore does not reduce the
admissible region.  
Alternatively, one may try to use the method
in \cite{kuchukova2025fast}. However, their method relies on a local CLT for edge counts for the
unconditional model and seems at best works up to the green line in \cref{fig:1}. 
%Moreover, their mixing speed is not optimal and is not sufficient to obtain refined properties of the conditional model (as we explain in the next paragraph). 
Another alternative method is mixing through spectral gap (cf. \cite{bauerschmidt2019very}). In particular, the conditional version of the spectral gap method by \cite{Bauerschmidt2025} suggests a threshold of $\beta<1$, which is still a subset of the replica symmetric region of the unconditional model. It would be interesting to further develop their method to prove metastable fast mixing throughout the replica symmetric region of the conditional model.
We also remark that both of the above two alternative methods
do not directly apply  to general conditional ERGMs with higher-order interactions.

%\red{By contrast, the spectral condition of \cite[Theorem~1.3]{Bauerschmidt2025}, under the exact Ising reparametrization in \cref{rem:bau}, is uniform for every fixed \(\beta<1\).  The two criteria are not nested: the spectral condition covers a larger range near \(p=1/2\), whereas the present coupling condition permits \(\beta>1\) when \(p\) is sufficiently close to \(0\) or \(1\).  }

\begin{figure}[ht]
    \centering
    \includegraphics[width=0.5\textwidth]{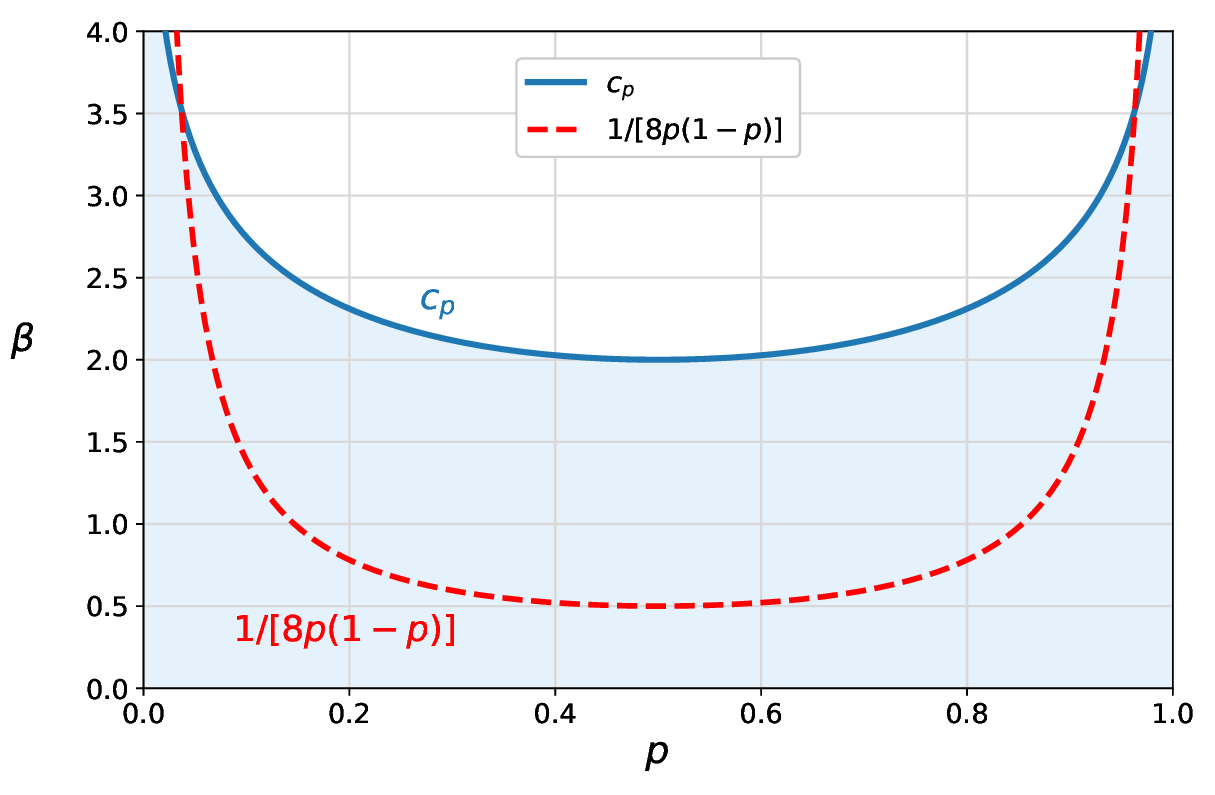}
    \caption{We conjecture that fast metastable mixing holds up to the blue line. However, we have only proved it up to the dotted red line using the contractive coupling argument. The intersection points of the two curves are $p\approx 0.5\pm 0.4632$.}
    \label{fig:2}
\end{figure}

To illustrate how metastable fast mixing yields refined properties of the conditional ERGM, we establish a weak Poincar\'e inequality with an exponentially small remainder (\cref{cor:full-defective-poincare}) and derive higher-order concentration inequalities (\cref{thm:higher-order-concentration-log}).  In particular, \cref{cor:triangle-variance} shows that, under the assumptions of \cref{thm:1informal}, the variance of the triangle count is $O(n^3)$, compared with order $O(n^4)$ in the unconditional model.  The optimal mixing speed is essential for this improvement.

%\red{This work leaves several natural questions.  First, can metastable fast mixing, and the resulting weak Poincar\'e and concentration inequalities, be extended to the whole replica symmetric region?
%, including \(4\beta p(1-p)\ge1/2\)?  For the two star model, the spectral condition of \cite{Bauerschmidt2025}, together with the global-exchange Dirichlet-form comparison in \cref{rem:bau}, already covers the range \(\beta<1\).  Extending that approach beyond the spectral range would require verifying the covariance condition introduced in \cite{Bauerschmidt2025}.  This route, however, appears specific to the Ising representation.} 
This work leaves several natural questions.  First, can metastable fast
mixing, and the resulting weak Poincar\'e and concentration inequalities,
be extended to the whole replica symmetric region?
%, including \(4\beta p(1-p)\ge 1/2\)?  The present restriction comes from the contractive coupling and is likely technical; an alternative may be to adapt the methods of \cite{Bauerschmidt2025}, although these appear specific to the two star Ising representation.  
Second, beyond the blue curve in \cref{fig:1} one
should analyze the nonconstant phase, including
mixing within individual basins and transitions between them.  Third, it
would be interesting to prove quantitative conditional CLTs directly under
the fixed-edge measure, potentially beyond the range accessible through the
unconditional model.  Finally, one would like to extend the theory to
general conditional ERGMs, where new notions of local regularity and new
coupling or functional-inequality arguments will be needed.  We leave these
questions for future investigation.

%This work leaves open the following interesting questions: (a) studying refined properties of the conditional ERGM in the replica symmetric region when $4\beta p(1-p)\geq 0.5$ (the quesiton mark in \cref{fig:2}); It might be possible to extend the techniques in \cite{Bauerschmidt2025} to prove weak Poincar\'e inequalities for conditional Ising models and address this question, although this technique will be restricted to the two ERGM;  (b) refined properties of the conditional model beyond the blue curve (bipartite graphon); (c) CLTs; (d) general conditional ERGMs. We leave them for future investigations.

In \cref{sec:pre}, we provide preparatory materials and give the formal statement of our main results. In \cref{sec:proof-main}, we prove our main results, and defer the proofs of some lemmas to \cref{sec:confinement} and \cref{sec:path-coupling}. Some additional technical arguments are given in the appendices.

\paragraph{Acknowledgements.} 
Fang X. was partially supported by Hong Kong RGC GRF 14304822, 14303423, 14302124, 14304125 and a CUHK direct grant. Liu S.H. was partially supported by the Fundamental Research Funds for the Central Universities DUT25RC(3)133.

\paragraph{Statement on artificial-intelligence assistance.} 
The main results and the ideas underlying their proofs were developed without any AI assistance. GPT 5.5 Pro and GPT 5.6 Pro assisted in carrying out the computations for the cavity argument in Appendix~\ref{sec:degree-to-local}, finding the counterexample in Appendix~\ref{sec:GPT1}, and improving the exposition of the manuscript.

\section{Preliminaries and main results}\label{sec:pre}
Recall the definition of the two star ERGM in \cref{eq:twostarmodel2}.
Building on \cite{chatterjee2011large}, \cite{chatterjee2013estimating} established a large deviation principle for general ERGMs on the space of graphons. 
To prepare for the statement of their results,
recall that \(\mathcal W\) denotes the space of symmetric measurable functions \(W:[0,1]^2\to[0,1]\). A finite graph is identified with a graphon through its adjacency matrix. 
 
%For every fixed \(H\),
%\[
        %t(H,x)=t(H,W_x)+O_H(n^{-1}).
%\]
The cut norm of an integrable function \(U:[0,1]^2\to\mathbb R\) is
\[
        \|U\|_\square
        =
        \sup_{S,T\subseteq[0,1]}
        \left|\int_{S\times T}U(x,y)\,dx\,dy\right|,
\]
where the supremum is over measurable sets \(S,T\).  The cut distance between graphons is
\[
        d_\square(U,W)=\inf_\sigma \|U-W^\sigma\|_\square,
\]
where the infimum is over measure-preserving bijections \(\sigma: [0,1]\to [0,1]\) and \(W^\sigma(x,y)=W(\sigma(x),\sigma(y))\).

For a finite graph \(H\) with vertices $\{1,\dots, v(H)\}$ and edge set $\mcl{E}(H)$, define its graphon homomorphism density to be
\[
        t(H,W)=
        \int_{[0,1]^{v(H)}}
        \prod_{\{i,j\}\in \mcl{E}(H)} W(x_i,x_j)\,
        dx_1\cdots dx_{v(H)} .
\]
When $H$ is a two star, $t(H, W)$ reduces to $t(W)$ in \cref{eq:tW}.

Specializing the large-deviation result of \cite{chatterjee2013estimating} to two star ERGMs shows that the model \cref{eq:twostarmodel2} is close in cut distance to the set of graphons $W$ that maximize the functional
\ben{\label{eq:variationUnconditional}
\alpha e(W)+\beta t(W)-I(W),
}
where 
\[
d_W(x):=\int_0^1 W(x,y)\dd y,
\quad  e(W)=\int_0^1 d_W(x)dx   ,\quad
t(W):=\int_0^1 d_W(x)^2\dd x,
\]
and
\ben{\label{eq:I(W)}
        I(W)=
        \frac12\int_{[0,1]^2}
        \bigl[W\log W+(1-W)\log(1-W)\bigr]dxdy,
}
with the convention \(0\log0=0\). It is also known that the maximizer of \cref{eq:variationUnconditional} is unique and a constant function almost surely on the two-dimensional space of parameters $\alpha\in \IR,\beta\geq 0$ with respect to the Lebesgue measure.

Now we turn to ERGMs conditioning on the number of edges $k$ such that $|k/{n\choose 2}-p|\to 0$.
%DELETE? We focus on the ERGMs conditioned on a fixed edges. Let $\beta_{2},\dots, \beta_{m}$ be positive parameters and let $ p\in (0,1)$. Let $\E_{(\beta_{1},\beta_{2},\dots, \beta_{m})}Y_{ij}=p$, where $Y_{ij}$ is the edge indicator of a random graph with vertex set $\{1,\dots, n\}$ and the expectation is with respect to the ERGM \eqref{eq:ERGM1} with parameters $(\beta_{1},\beta_{2},\dots, \beta_{m})$.
The large deviation principle for conditional ERGMs was given by \cite{dembo2018large} and \cite{kenyon2017asymptotics}. For two star ERGMs, the conditional model is close in cut distance to the set of maximizers $W_p^*$ in \cref{eq:variational}.
Note that the variational functional \cref{eq:variational} does not depend on $\alpha$. This is because the conditional model is independent of $\alpha$ (see \cite{fang2025conditional} for example). \cref{prop:1} characterizes the replica symmetric region of \cref{eq:variational}.

In what follows, we exclude the critical case $\beta=c_p$ and simply refer
to $\{(p,\beta): 0<p<1,\ 0\leq\beta<c_p\}$ as the replica symmetric
region. In many statistical physics models, the behavior at criticality
may differ from that in the subcritical region; see, for example,
\cite{chatterjee2011nonnormal} and \cite{mukherjee2023statistics}.

We consider a sequence of two star ERGMs indexed by $n\geq 1$, with parameters $\alpha=\alpha_n$ and $\beta=\beta_n$ that may depend on $n$. Let $N=n(n-1)/2$, let $k=k_n$ be a sequence of positive integers, and set $p=p_n=k/N$. Let $\mu^k_\beta=\mu^k_{\beta,n}$ be the conditional model given exactly $k$ edges. We fix a compact subset $K$ of the replica symmetric region $\{(p,\beta): 0<p<1, 0\leq \beta<c_p\}$ and suppose that $(p,\beta)\in K$ for all sufficiently large $n$.

Following the proofs of \cite[Theorem~3.2]{chatterjee2013estimating} and \cite[Theorem~2]{kenyon2017asymptotics}, we show in the following proposition that, in the replica symmetric region, the conditional measure $\mu_\beta^k$ is exponentially concentrated in every fixed cut-metric neighborhood of $p\mathbf 1$. In the following, we say a triple $\{n\geq 1, p\in (0,1), k\in \{0,\dots, N\}\}$ is \emph{admissible} if $k=Np$.

\begin{proposition}
\label{theo of appro}
For every fixed \(\eta>0\), there exist constants \(n_{K,\eta},c_{K,\eta},C_{K,\eta}>0\), depending only on \(\eta\) and \(K\), such that, uniformly for all \((p,\beta)\in K\), all admissible \(k=Np\in\mathbb Z\), and all \(n\geq n_{K,\eta}\), we have
\ben{\label{eq:prop21-1}
        \mu_\beta^k\left(d_\square(W_X,p\mathbf 1)>\eta\right)
        \le C_{K,\eta} e^{-c_{K,\eta} n^2},
}
where $X\sim \mu^k_\beta$ and $W_X$ denotes the graphon defined through the adjacency matrix of $X$.    
\end{proposition}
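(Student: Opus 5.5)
We follow the large-deviation route of \cite[Theorem~3.2]{chatterjee2013estimating} and \cite[Theorem~2]{kenyon2017asymptotics}, with uniformity in $(p,\beta)\in K$ tracked explicitly.

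\textbf{Step 1: rewrite the probability as a ratio of sums.} Since the conditional model does not depend on $\alpha$, we write
\[
\mu^k_\beta(A)=\frac{\sum_{G\in A,\,e(G)=k}\exp(2\beta t(G)/n)}{\sum_{e(G)=k}\exp(2\beta t(G)/n)},
\]
and note that $2\beta t(G)/n = n^2\beta\, t(W_G)+O(\beta n)$.

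\textbf{Step 2: lower-bound the denominator.} The number of graphs with exactly $k$ edges is $\binom{N}{k}=\exp(N H(p)+O(\log n))$. By a standard concentration bound for the uniform measure on $k$-edge graphs (for instance via negative association, or by comparison with $G(n,p)$ through the local CLT for $e(G)$), most such graphs have $t(W_G)\ge p^2-o(1)$. Hence the denominator is at least
\[
\exp\!\Big(n^2\big[\beta p^2+\tfrac12 H(p)\big]-o(n^2)\Big),
\]
which in the normalization $\mathcal F_\beta(p\mathbf 1)=\beta p^2$ corresponds to the relative entropy being measured against $G(n,p)$.

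\textbf{Step 3: upper-bound the numerator.} It is cleaner to work with the tilted counting measure $P_p=G(n,p)$. Then
\[
\mu^k_\beta(A)=\frac{E_p\big[e^{2\beta t/n}\,\mathbf 1_{A,\,e=k}\big]}{E_p\big[e^{2\beta t/n}\,\mathbf 1_{e=k}\big]}.
\]
The denominator is at least $P_p(e=k)\,e^{n^2\beta p^2-o(n^2)}$ with $P_p(e=k)\ge n^{-2}$, using the concentration of $t$ under $G(n,p)$ conditioned on $e=k$. For the numerator we apply the Chatterjee--Varadhan upper bound. Cover the compact set
\[
\widetilde{\mathcal K}=\{\tilde W:\ d_\square(\tilde W,p\mathbf 1)\ge\eta,\ |e(W)-p|\le\delta\}
\]
by finitely many small cut-balls. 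Since $t$ is cut-continuous and $I_p$ is lower semicontinuous, the numerator is at most
\[
\exp\!\Big(n^2\sup_{\widetilde{\mathcal K}}(\beta t-I_p)+o(n^2)\Big).
\]
The edge-density constraint must be relaxed to the closed band $|e(W)-p|\le\delta$, because $e(W)=p$ exactly is not an open condition in the cut topology. However, $e(W)$ is cut-continuous, so this set is closed and the relaxation is harmless as $\delta\to0$.

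\textbf{Step 4: obtain a uniform gap.} We need
\[
\sup_{\widetilde{\mathcal K}}\mathcal F_\beta\le\beta p^2-2\gamma \quad\text{for some } \gamma=\gamma(K,\eta)>0 \text{ uniformly on } K.
\]
\cref{prop:1} gives uniqueness at each fixed $(p,\beta)$. To pass to uniformity, argue by contradiction. Take sequences $(p_j,\beta_j)\to(p,\beta)\in K$ and $W_j$ with $d_\square(W_j,p_j\mathbf 1)\ge\eta$ and $\mathcal F_{\beta_j}(W_j)\ge\beta_jp_j^2-1/j$. By compactness of the graphon space modulo measure-preserving maps, extract a cut-limit $W$. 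Lower semicontinuity of $I_p$, jointly in $(p,W)$ since $p$ stays in a compact subset of $(0,1)$, together with continuity of $t$, gives $W\in\mathcal W_p$, $d_\square(W,p\mathbf 1)\ge\eta$, and $\mathcal F_\beta(W)\ge\beta p^2$. This contradicts uniqueness in \cref{prop:1}.

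A more quantitative alternative uses the displayed inequality
\[
\mathcal F_\beta(W)-\mathcal F_\beta(p\mathbf 1)\le(\beta-c_p)\int(d_W-p)^2,
\]
but this controls only degree fluctuations. It must be supplemented by the strict convexity of $D(\cdot\|p)$, which yields $I_p(W)\gtrsim\|W-p\|_2^2\ge\|W-p\|_\square^2$. Combining this with $c_p-\beta\ge\kappa_K>0$ gives an explicit gap.

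\textbf{Step 5: conclude.} Choose $\delta$ so that the relaxation error is below $\gamma$, and take $n$ large so that all $o(n^2)$ terms are below $\gamma n^2/2$. This yields $\mu^k_\beta\le C e^{-\gamma n^2}$. We expect the main obstacle to be the uniformity of the $o(n^2)$ error terms in the Chatterjee--Varadhan covering argument over $K$. These errors depend on $p$ only through $\log p$ and $\log(1-p)$, which are bounded on $K$, so a careful bookkeeping of the finite-cover constants should suffice.
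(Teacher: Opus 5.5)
Your proposal is correct in outline but takes a different route from the paper. The paper applies the fixed-edge graphon LDP of Dembo--Lubetzky directly to the uniform measure on $\Omega_{k_j}$, whose rate function is $I_p$ on $\mathcal W_p$ and $+\infty$ off it. It uses that LDP's upper bound for the numerator and its lower bound for the partition function. You use only the Chatterjee--Varadhan upper bound for $G(n,p)$. This works because $G(n,p)$ restricted to $\Omega_k$ is proportional to counting measure, and $P_p(e=k)$ is only polynomially small, so the conditioning is invisible at speed $n^2$.

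Your denominator bound is even easier than you make it. No concentration is needed: by Cauchy--Schwarz, $t(W_G)\ge e(W_G)^2=p^2(1-1/n)^2$ for every $G\in\Omega_k$. So your route avoids the fixed-edge LDP, whose lower bound is the delicate part. The price is the band $|e(W)-p|\le\delta$, which the paper never needs because the constraint is built into its rate function. The band is needed not because $\{e(W)=p\}$ fails to be open (an upper bound only needs closed sets), but because $e(W_G)=p(1-1/n)\neq p$ and $p=p_n$ moves with $n$.

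The band is harmless. For $W\in\mathcal W_{p'}$ one has the identity $I_p(W)=I_{p'}(W)+\tfrac12 D(p'\|p)$. Combine this with your ``quantitative alternative'', which is exactly the paper's \eqref{eq:prop21-strong-gap}: the bounds $\beta\int g^2\le\frac\beta2\|W-p\mathbf 1\|_2^2$ and $I_p(W)\ge\frac{c_p}{2}\|W-p\mathbf 1\|_2^2$ give the gap $\frac{c_p-\beta}{2}\|W-p\mathbf 1\|_2^2$ at once. For $\delta$ small one has $c_{p'}-\beta\ge\kappa_K/2$, and applying the gap at $p'$ yields $\sup_{\widetilde{\mathcal K}}(\beta t-I_p)\le\beta(p+\delta)^2-\frac{\kappa_K}{4}(\eta-\delta)^2$. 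Your compactness version of Step 4 also works, but only if the band width is sent to $0$ along the sequence. With $\delta$ fixed, the limit need not lie in $\mathcal W_p$, and uniqueness on $\mathcal W_p$ says nothing about a limit with $e(W)\neq p$.

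The step you leave open is uniformity over $K$, and bookkeeping is not the efficient fix. Chatterjee--Varadhan is a $\limsup$ statement for a fixed $p$ and a fixed closed set, whereas your base measure $G(n,p_n)$ and your set $\widetilde{\mathcal K}$ both move with $n$. The paper's compactness device closes this directly:
\begin{enumerate}
\item Pass to a subsequence with $(p_j,\beta_j)\to(p,\beta)\in K$.
\item Use the fixed base measure $G(n,p)$. This is legitimate because any $G(n,p')$ restricted to $\Omega_{k_j}$ is proportional to counting measure. Moreover $P_p(e=k_j)=e^{-N_jD(p_j\|p)+O(\log n_j)}=e^{-o(n_j^2)}$ still suffices in place of your $n^{-2}$.
\item Contain the event in the fixed closed set $\{d_\square(W,p\mathbf 1)\ge\eta/2,\ |e(W)-p|\le\delta\}$.
\item Absorb the parameter drift via $\sup_W|\beta_j t(W)-\beta t(W)|\le|\beta_j-\beta|$, which gives a sequential rate.
\item Conclude by contradiction, as in the last step of the paper's proof.
\end{enumerate}
This gives $n_{K,\eta}$ and the constants uniformly over $K$ and all admissible $k$.
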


%To study refined properties of the conditional model, we consider the global Kawasaki algorithm to sample from it. \red{To add a short description of the Kawasaki algorithm.}

To study refined properties of the conditional model, we consider the global
Kawasaki dynamics on the fixed-edge slice
\[
    \Omega_k
    :=
    \left\{
        x\in\{0,1\}^{\mathcal I}:
        \sum_{e\in\mathcal I}x_e=k
    \right\},
\]
where $\mathcal I:=\binom{[n]}2$ denotes the set of $N$ potential edges in a graph on $n$ vertices labeled by $[n]:=\{1,\dots, n\}$.
For \(X\in\Omega_k\), define the finite two star count and Hamiltonian by
\ben{\label{eq:Hamilton}
    S_2(X):=\sum_{v=1}^n\binom{\deg_X(v)}2,
    \qquad
    \mathcal H_\beta(X):=\frac{2\beta}{n}S_2(X),
}
where $\deg_X(v)$ denotes the degree of the vertex $v$ in the graph $X$.
The conditional measure is therefore
\[
    \mu_\beta^k(X)
    =
    \frac{e^{\mathcal H_\beta(X)}}
         {\sum_{Y\in\Omega_k}e^{\mathcal H_\beta(Y)}}.
\]
For the standard graphon representation (recall \cref{eq:tW}),
\[
        t(W_X)=n^{-3}\sum_v\deg_X(v)^2,
        \qquad
        \mathcal H_\beta(X)=n^2\beta t(W_X)-\frac{2\beta k}{n}.
\]
Thus, on \(\Omega_k\), the conditional measure equivalently has weights
\(\exp\{n^2\beta t(W_X)\}\).
For \(x\in\Omega_k\), write
\[
    E_1(x):=\{e\in\mathcal I:x_e=1\},
    \qquad
    E_0(x):=\{e\in\mathcal I:x_e=0\}.
\]
For \(x,y\in\Omega_k\), define the Kawasaki distance
\ben{\label{eq:Kawadist}
        \rho(x,y)
        :=|E_1(x)\setminus E_1(y)|
        =|E_1(y)\setminus E_1(x)|.
}
A single exchange can remove at most one mismatched occupied edge, while
successively exchanging the mismatched occupied and unoccupied edges gives a
path of this length; thus \(\rho\) is the graph distance on \(\Omega_k\).
Given the current state \(X_m=x\), choose an occupied edge-position
\(s\in E_1(x)\) and an unoccupied edge-position \(t\in E_0(x)\),
independently and uniformly.  Let \(x^{s\leftrightarrow t}\) be the graph
obtained by deleting \(s\) and adding \(t\).  Conditional on this proposal,
the chain moves from \(x\) to \(y=x^{s\leftrightarrow t}\) with the
two-state heat-bath probability
\[
    \alpha(x,y)
    :=
    \frac{\mu_\beta^k(y)}
         {\mu_\beta^k(x)+\mu_\beta^k(y)},
\]
and otherwise remains at \(x\).  Thus the transition kernel is
\ben{\label{eq:KawaTran}
    P(x,y)
    =
    \frac{1}{k(N-k)}
    \frac{\mu_\beta^k(y)}
         {\mu_\beta^k(x)+\mu_\beta^k(y)},
    \qquad
    y=x^{s\leftrightarrow t}\ne x,
}
where \(x_s=1\), \(x_t=0\), and
\[
    P(x,x)
    =
    1-\sum_{\substack{y\in\Omega_k\\y\ne x}}P(x,y).
\]
The proposal is called global because \(s\) and \(t\) may be arbitrary
occupied and unoccupied edge-positions, rather than neighboring sites in the
line graph.  Moreover,
\[
    \mu_\beta^k(x)P(x,y)
    =
    \frac{1}{k(N-k)}
    \frac{\mu_\beta^k(x)\mu_\beta^k(y)}
         {\mu_\beta^k(x)+\mu_\beta^k(y)}
    =
    \mu_\beta^k(y)P(y,x),
\]
so the chain is reversible with stationary distribution \(\mu_\beta^k\).
Since every legal exchange has positive probability, the chain is
irreducible on \(\Omega_k\); the positive holding probability also makes it
aperiodic.

For later use, write
\[
        \square_\eta(p)=
        \{x\in\mathcal G_n: d_\square(W_x,p\mathbf 1)\le\eta\},
        \qquad
        \square_\eta^k(p)=\square_\eta(p)\cap\Omega_k .
\]
For two edge-positions \(e,f\in\mathcal I\), write \(f\sim e\) if
\(f\ne e\) and \(f\cap e\ne\varnothing\) (that is, they share exactly one vertex).  For \(X\in\Omega_k\), define
\ben{\label{eq:rex}
        d(e,X):=|\{f\in\mathcal I:X_f=1,\ f\sim e\}|,
        \qquad
        r(e,X):=\frac{d(e,X)}{2n}.
}
We write
\ben{\label{eq:Gamma}
        \Gamma_{p,\varepsilon}
        =
        \{X\in\Omega_k:r(e,X)\in [p-\varepsilon,p+\varepsilon]
        \ \text{for every edge-position }e\in\mathcal I\},
}
and
\[
        r_{\max}(X):=\max_{e\in\mathcal I} r(e,X),
        \qquad
        r_{\min}(X):=\min_{e\in\mathcal I} r(e,X).
\]

The variational threshold in \cref{prop:1} describes the
replica symmetric region of the fixed edge two star model.  The dynamical
arguments below require, in addition, a quantitative contraction margin for
the Kawasaki coupling.  We therefore introduce the region
\[
    \mathscr R_{\mathrm{RS}}
    :=
    \{(p,\beta):0<p<1,\ 0\leq\beta<c_p\},
\]
and
\[
    \mathscr R_{\mathrm{Kaw}}
    :=
    \left\{
    (p,\beta)\in\mathscr R_{\mathrm{RS}}:
    0\leq 4\beta p(1-p)<\frac12
    \right\}.
\]
%The blue curve in Figure~\ref{fig:2} corresponds to the boundary \(\beta=c_p\), while the dotted red curve in Figure~\ref{fig:2} corresponds to \(4\beta p(1-p)=1/2\).  
The mixing and concentration results proved
below are not asserted on the whole replica symmetric region.  They are proved
inside the smaller region \(\mathscr R_{\mathrm{Kaw}}\).

Throughout the remaining part of the paper, we fix a compact set
$B\subset \mathscr R_{\mathrm{Kaw}} .$ In particular, we assume that there exists a constant \(\kappa_B>0\) such that, for every
\((p,\beta)\in B\),
\ben{\label{eq:kappaB}
    \kappa_B\le p\le 1-\kappa_B,
    \qquad
    \beta\le c_p-\kappa_B,
    \qquad
    \frac12-4\beta p(1-p)\ge \kappa_B .
}
All constants in the estimates below may depend on \(B\), but not on the particular choice of \((p,\beta)\in B\).  When the model parameter $\beta=\beta_n$ and the edge density $p=p_n$ are
allowed to vary with \(n\), we assume that \(k=k_n=Np_n\in\mathbb Z\)
and \((p_n,\beta_n)\in B\).  To keep the notation light, we will drop the subscript $n$.  
%In particular, Assumption~\ref{ass:conditional-RS} holds by Proposition~\ref{threshold}, and there is a uniform positive margin
%\[
%    \frac12-4\beta p(1-p)\ge \kappa_B .
%\]

As will be shown in Appendix~\ref{sec:GPT1}, it is in general not possible to prove fast mixing with arbitrary initial state. Therefore, following \cite{bresler2024metastable}, we consider metastable mixing.

Let \(P\) denote the global Kawasaki kernel defined in \cref{eq:KawaTran}, whose stationary
distribution is \(\mu_\beta^k\).  
For any \(\varepsilon>0\), set
\(
\pi_\varepsilon=\mu_\beta^k(\,\cdot\mid\Gamma_{p,\varepsilon}).
\) The restricted kernel \(P_\varepsilon\) is defined as the hard-wall restriction of the full
kernel to \(\Gamma_{p,\varepsilon}\): for \(x,y\in\Gamma_{p,\varepsilon}\), \(x\ne y\),
\[
        P_\varepsilon(x,y)=P(x,y),
\]
and
\[
        P_\varepsilon(x,x)
        =
        P(x,x)+P(x,\Gamma_{p,\varepsilon}^c)
        =
        1-
        \sum_{\substack{y\in\Gamma_{p,\varepsilon}\\ y\ne x}}P(x,y).
\]
Equivalently, the chain uses the same Kawasaki proposal and the same
acceptance probability as the full chain, but any move which would leave
\(\Gamma_\varepsilon\) is turned into a holding move.  Since \(P\) is reversible
with respect to \(\mu_\beta^k\), \(P_\varepsilon\) is reversible with respect to
\(\pi_\varepsilon\).

\begin{theorem}\label{coupling}
Fix a compact set \(B\subset\mathscr R_{\mathrm{Kaw}}\).  There exists
\(\varepsilon_0=\varepsilon_0(B)>0\), depending only on \(B\), such that the following holds
for all \((p,\beta)\in B\) and all \(0<\varepsilon\le\varepsilon_0\).  There are constants
\(\alpha_{B,\varepsilon}>0\), \(c_{B,\varepsilon}>0\), and
\(C_{B,\varepsilon}<\infty\), and \(n_0=n_0(B,\varepsilon)<\infty\), such that,
for every \(n\ge n_0\),
\ben{\label{eq:regularregion}
\mu_\beta^k(\Gamma_{p,\varepsilon/4}^c)\leq C_{B,\varepsilon} e^{-c_{B,\varepsilon} n}
}
and for every
\(x\in\Gamma_{p,\varepsilon/4}\), we have

\smallskip
\noindent
\textup{(i)} Let \((X_t)_{t\ge0}\) be the full Kawasaki chain with kernel
\(P\), started from \(X_0=x\), and let
\((\widetilde X_t)_{t\ge0}\) be the restricted Kawasaki chain with kernel
\(P_\varepsilon\), started from \(\widetilde X_0=x\).  For every
\(T\le e^{\alpha_{B,\varepsilon} n}\), the two chains can be coupled so that
\[
        \mathbb P\left(
        (X_0,\ldots,X_T)=(\widetilde X_0,\ldots,\widetilde X_T)
        \right)
        \ge
        1-C_{B,\varepsilon}e^{-c_{B,\varepsilon}n}.
\]

\smallskip
\noindent
\textup{(ii)} Whenever \(e^{-c_{B,\varepsilon}n}\le\delta\le1\), the
restricted chain satisfies
\[
        d_{\mathrm{TV}}\bigl(xP_\varepsilon^T,
        \mu_\beta^k(\cdot\mid\Gamma_{p,\varepsilon})\bigr)
        \le \delta
\]
for
\[
        T=\lceil C_{B,\varepsilon}n^2\log(n^2/\delta)\rceil.
\]
For the same value of \(T\), the full Kawasaki chain satisfies
\[
        d_{\mathrm{TV}}\bigl(xP^T,\mu_\beta^k\bigr)
        \le \delta .
\]
\end{theorem}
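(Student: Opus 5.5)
The proof follows the four ingredients outlined in the introduction, assembled in order.

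\emph{Step 1: the regular basin has overwhelming probability.} To get \cref{eq:regularregion}, start from \cref{theo of appro}. With $\eta$ small it confines $X$ to $\square^k_\eta(p)$ up to probability $e^{-cn^2}$. Cut-distance closeness controls averages of $r(e,X)$ but not the maximum over $e$, so I would add a cavity/switching argument. Fix a vertex $v$ and compare configurations where $\deg(v)$ is atypical with those obtained by swapping $v$'s edges with random non-edges elsewhere. Conditional on the rest of the graph, the law of $\deg(v)$ is a tilted hypergeometric with tilt $\approx 2\beta(\deg(v)/n-p)$, corrected by the mean-field term that is fixed by the cut-closeness. The key point is that strict concavity of $q\mapsto -D(q\|p)+2\beta p q$ at $q=p$ (which holds because $\beta<c_p$ gives room) yields $\P(|\deg(v)/n-p|>\varepsilon/8)\le e^{-cn}$. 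Since $d(e,X)=\deg(u)+\deg(w)-2X_e$ for $e=\{u,w\}$, we have $r(e,X)$ equal to the average of the two endpoint degree densities up to $O(1/n)$. A union bound over the $n$ vertices then gives $\mu^k_\beta(\Gamma^c_{p,\varepsilon/4})\le Ce^{-cn}$.

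\emph{Step 2: confinement, giving (i).} Couple $X$ and $\widetilde X$ identically until $X$ first tries to leave $\Gamma_{p,\varepsilon}$. Then (i) reduces to showing that from $x\in\Gamma_{p,\varepsilon/4}$ the exit time exceeds $e^{\alpha n}$ with probability $\ge 1-Ce^{-cn}$. Each step changes every $r(e,\cdot)$ by $O(1/n)$. When $r_{\max}$ is near $p+\varepsilon$ (or $r_{\min}$ near $p-\varepsilon$), I would compute the drift of a vertex's degree under one Kawasaki step. The removal rate is proportional to $\deg/k$ times an acceptance probability, and the addition rate is proportional to $(n-\deg)/(N-k)$ times an acceptance probability; the acceptance ratios are $\exp(\pm 2\beta(\text{degree differences})/n)$. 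The net drift is $\asymp -(\deg/n-p)/n^2$ with a negative sign, again using $\beta<c_p$ and the fact that $\mathcal H_\beta$ is linear in the degrees to first order. A supermartingale/Azuma argument for an exponential Lyapunov function $\sum_v e^{\lambda n(\deg_v/n-p)^2}$, or a gambler's-ruin estimate per vertex with a union bound over vertices and time windows, then gives exit probability $\le e^{-cn}$ per window of $n^2$ steps. Summing over $e^{\alpha n}$ windows with $\alpha<c$ finishes (i).

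\emph{Step 3: contraction, giving (ii), which is the main obstacle.} For $x,y\in\Gamma_{p,\varepsilon}$ with $\rho(x,y)=1$, I would build a one-step coupling of $P_\varepsilon$. The proposals are matched by pairing the unique mismatched occupied and unoccupied positions, and the heat-bath uniforms are shared. The target is
\[
\E\rho(X_1,Y_1)\le 1-\frac{c}{n^2}.
\]
Disagreement is removed with probability $\asymp 1/(k(N-k))\cdot (k+N-k)\asymp 1/N$, namely when the proposal touches a mismatched position and the two chains accept. Disagreement is created when a common proposal $(s,t)$ has different acceptance probabilities in the two chains. These differ by at most $\tfrac{2\beta}{n}\cdot\tfrac14\cdot(\text{indicator that }s\text{ or }t\text{ is adjacent to a mismatched edge})$, and there are $O(n)$ such positions. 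Bounding this carefully, with the mismatched pair being adjacent edges in the worst case, I expect a creation rate of about $4\beta p(1-p)\cdot 2/N$ against a removal rate of about $1/N$, up to $O(\varepsilon)$. This is exactly where $4\beta p(1-p)<1/2$ enters, with the $p(1-p)$ factor coming from the acceptance derivative $\alpha(1-\alpha)$ evaluated in $\Gamma_{p,\varepsilon}$. The hard-wall holding moves must also be handled: a holding move in one chain but not the other could create extra disagreement. The plan there is to make $\varepsilon_0$ small and note that such moves only occur near the boundary, then run the contraction through the stopped path coupling with the exit estimate of Step 2. Path coupling along geodesics, whose intermediate states lie in $\Gamma_{p,\varepsilon}$ after slightly enlarging the basin, gives $\E\rho(X_T,Y_T)\le n^2(1-c/n^2)^T+Ce^{-cn}$. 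For $T=Cn^2\log(n^2/\delta)$ this is at most $\delta$, so the restricted chain mixes.

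\emph{Step 4: transfer to the full chain.} Combine three facts: (i); the estimate $d_{TV}(\mu^k_\beta(\cdot\mid\Gamma_{p,\varepsilon}),\mu^k_\beta)\le\mu^k_\beta(\Gamma^c_{p,\varepsilon})\le e^{-cn}$ from Step 1; and the requirement $T\le e^{\alpha n}$, which holds since $\delta\ge e^{-cn}$ makes $T$ polynomial in $n$. The triangle inequality then gives $d_{TV}(xP^T,\mu^k_\beta)\le\delta$, after adjusting constants.
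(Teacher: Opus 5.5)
Your architecture matches the paper's: cut-metric concentration plus row--bulk switchings for the mass of the basin, a boundary-drift supermartingale for confinement, a one-step contraction fed into \cref{le-21}, and a transfer between $P$ and $P_\varepsilon$ via (i). But Step~2 rests on the wrong condition, and your Step~3 route needs a lemma you have not supplied.

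\textbf{Step 2 (and Step 1).} The uniform inward drift does not follow from $\beta<c_p$ on the bare basin. To first order, the drift of $q_v=\deg(v)/n-p$ is proportional to $-(1-2\beta p(1-p))\,q_v+\beta p(1-2p)\,\bar q_{N(v)}$. Here $\bar q_{N(v)}$ is the mean of $q_w$ over the neighbours of $v$: high-degree neighbours raise $d(s)$ for $s\ni v$, which suppresses deletions at $v$. Inside $\Gamma_{p,\varepsilon}$ one can have $\bar q_{N(v)}\approx q_v\approx\varepsilon$. Inward drift then needs $\beta p(3-4p)<1$ for $p\le 1/2$, which fails for example at $p=1/4$, $2<\beta<c_{1/4}=2\log 3$.

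So $\beta<c_p$ only makes the self-term restoring. The cross term must be beaten either by cut-norm control along the trajectory (the extra work flagged in the footnote to \cref{le-3}) or by the Kawasaki margin. The paper uses the margin: \cref{le-3} bounds $|\alpha-\frac12|\le 2\beta h+C(h^3+n^{-1})$ and needs $1-8\beta p(1-p)\ge 2\kappa_B$. Since $8\beta p(1-p)<1$ implies $\beta p(3-4p)<1$, your step is repairable on $B\subset\mathscr R_{\mathrm{Kaw}}$, but only if you invoke that condition here, not just in Step~3.

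Step~1 is also misdescribed. On $\Omega_k$, conditioning on the rest of the graph fixes $\deg(v)=k-e(\text{bulk})$, so the comparison must go through switchings: the multiplicity ratio gives $-nD(q\|p)$ and the energy gives $\beta n(q-p)^2$. What is needed is the global bound $\inf_{|q-p|\ge\varepsilon}\{D(q\|p)-\beta(q-p)^2\}>0$. For $p\ne 1/2$ this binds at $q=1-p$, so no concavity property at $q=p$ can supply it.

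\textbf{Step 3} is a genuinely different route from \cref{le-5}. Your adjacent-pair count gives the right threshold: coalescence $\approx N/(2k(N-k))=1/(2Np(1-p))$ against splitting $\lesssim 4\beta/N$. The $p(1-p)$ comes from $k$, $N-k$ and the roughly $4np$ occupied and $4n(1-p)$ unoccupied positions adjacent to the mismatches, not from $\alpha(1-\alpha)\approx 1/4$. Also, disjoint mismatches, not adjacent ones, are the worst case.

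To pass from adjacent pairs to all of $\Lambda$, you need every pair in $\Lambda$ joined by a $\rho$-geodesic that stays where the adjacent estimate holds. This is true but must be proved. Order the swaps uniformly at random; then each $d(e,z_j)$ is hypergeometric and concentrated at the linear interpolation of $d(e,x)$ and $d(e,y)$. A Bernstein bound plus a union bound over $(e,j)$ then keeps the path in $\Gamma_{p,\varepsilon+o(1)}$.

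The composition must also use $P$, or take $\Lambda\subset\Gamma_{p,\varepsilon/2}^2$ where $P_\varepsilon=P$. Otherwise, at the hard wall a one-sided holding move splits the pair with probability of order $1/n\gg c/n^2$.

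The paper avoids both issues with a coupling of the full chain that contracts for every $(x,y)\in\Gamma\times\Gamma$ at once: mismatched positions are matched uniformly, and $D\times\widetilde D$ proposals contribute $2-2/r$. It runs this against a stationary copy, using only one-time marginals, and then transfers from $P$ to $P_\varepsilon$. That is the reverse of your Step~4; either direction works given (i).
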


Just as (metastable) fast mixing of the Glauber dynamics for the unconditional ERGM implies a (weak) Poincar\'e inequality
(\cite{ganguly2019}, \cite{bresler2024metastable}), metastable mixing of the Kawasaki algorithm obtained in \cref{coupling} can be used to prove the weak Poincar\'e inequality for the restricted kernel (\cref{lem:defective-poincare}) as well as for the full Kawasaki kernel (\cref{cor:full-defective-poincare}).

\begin{theorem}
\label{lem:defective-poincare}
Fix \(0<\varepsilon\le\varepsilon_0(B)\) as in \cref{coupling}.  
Set
\(\pi_\varepsilon:=\mu_\beta^k(\,\cdot\mid \Gamma_{p,\varepsilon})\).  
Let \(\mathcal E_\varepsilon\) be
the Dirichlet form of the restricted Kawasaki kernel \(P_\varepsilon\):
\[
\mathcal E_\varepsilon(f,f)
=
\frac12
\sum_{x,y\in \Gamma_{p, \varepsilon}}
\pi_{\varepsilon}(x)P_\varepsilon(x,y)\bigl(f(y)-f(x)\bigr)^2 .
\]
Then there exist constants \(C_{B,\varepsilon}<\infty\) and
\(c_{B,\varepsilon}>0\), and \(n_0=n_0(B,\varepsilon)<\infty\), depending only
on \(B\) and \(\varepsilon\), such that, uniformly over
\((p,\beta)\in B\), for every  \(n\ge n_0\) and every
bounded \(f:\Gamma_\varepsilon\to\mathbbm R\),
\begin{equation}\label{eq:no-log-defective-poincare}
    \Var_{\pi_\varepsilon}(f)
    \le
    C_{B,\varepsilon}n^2\mathcal E_\varepsilon(f,f)
    +
    C_{B,\varepsilon}e^{-c_{B,\varepsilon}n}\norm{f}_\infty^2.
\end{equation}
\end{theorem}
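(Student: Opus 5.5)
We derive the weak Poincaré inequality \cref{eq:no-log-defective-poincare} directly from the total-variation mixing bound of \cref{coupling}(ii) for the restricted chain. The warm start used there is not a warm start in general, so we need a version valid only from ``good'' initial states. The standard route, following \cite{ganguly2019} and \cite{bresler2024metastable}, runs as follows. Write \(\pi=\pi_\varepsilon\) and \(P_\varepsilon\), which is reversible with respect to \(\pi\), and assume without loss of generality that \(\pi(f)=0\). Set \(T=\lceil C n^2\log(n^2/\delta)\rceil\) with \(\delta=e^{-c n}\) for the constant \(c=c_{B,\varepsilon}\) of \cref{coupling}, so that \(T\le C' n^3\).

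\textbf{Step 1 (variance decomposition along the chain).} Using stationarity and reversibility,
\[
\Var_\pi(f)=\|f\|_{L^2(\pi)}^2=\|P_\varepsilon^{T}f\|^2+\sum_{j=0}^{T-1}\bigl(\|P_\varepsilon^{j}f\|^2-\|P_\varepsilon^{j+1}f\|^2\bigr).
\]
To keep the terms nonnegative we replace \(P_\varepsilon\) by its lazy version \(\tfrac12(I+P_\varepsilon)\). This changes the Dirichlet form only by a factor \(2\), and mixing still holds with \(T\) doubled, via the binomial-mixture/coupling argument. Each increment is bounded by \(2\mathcal E_\varepsilon(P^j f,P^j f)\le 2\mathcal E_\varepsilon(f,f)\), because the Dirichlet form is nonincreasing along a lazy reversible semigroup; the point is that \(\langle g,(I-Q^2)g\rangle\le 2\langle g,(I-Q)g\rangle\) for the self-adjoint \(Q\) with spectrum in \([0,1]\). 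Hence
\[
\Var_\pi(f)\le \|P^{T}f\|^2+2T\,\mathcal E_\varepsilon(f,f).
\]
This naive bound gives \(n^2\log n\), not \(n^2\); Step 3 removes the logarithm.

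\textbf{Step 2 (remainder).} Since \(\pi(f)=0\), we have \(|P^Tf(x)|\le 2\|f\|_\infty d_{\mathrm{TV}}(xP^T,\pi)\). By \cref{coupling}(ii) this is at most \(2\delta\|f\|_\infty\) for \(x\in\Gamma_{p,\varepsilon/4}\). For the remaining \(x\) we use the trivial bound \(2\|f\|_\infty\). By \eqref{eq:regularregion} and \cref{theo of appro}, \(\pi(\Gamma_{p,\varepsilon/4}^c)\le C e^{-cn}\), after normalizing by \(\mu(\Gamma_{p,\varepsilon})\ge 1/2\). Therefore \(\|P^Tf\|^2\le C e^{-cn}\|f\|_\infty^2\).

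\textbf{Step 3 (removing the logarithm; main obstacle).} We want \(C n^2\) rather than \(T\asymp n^2\log n\). The plan is to apply Step 1 with a fixed time \(T_0=\lceil C n^2\rceil\) chosen so that \(d_{\mathrm{TV}}(xP^{T_0},\pi)\le 1/4\) from every good \(x\); this follows from \cref{coupling}(ii) with \(\delta=1/4\). Then we iterate. Let \(A=\Gamma_{p,\varepsilon/4}\) and \(\theta=\sup_{g:\pi(g)=0}\|P^{T_0}g\|/\|g\|\), and split \(g=g\mathbf 1_A+g\mathbf 1_{A^c}\). The first piece contracts by a factor \(1/2\) via the TV bound, but only in \(L^\infty\to L^\infty\) sense, and converting this to \(L^2\) is exactly the difficulty. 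To avoid it, we work with the operator norm on functions bounded in sup norm. Set \(g_m=P^{mT_0}f\); this is mean zero with \(\|g_m\|_\infty\le\|f\|_\infty\). Exact contraction on \(A\) gives
\[
\|g_{m+1}\|_{L^2}^2\le \tfrac14\|g_m\|_\infty'^2+Ce^{-cn}\|f\|_\infty^2,
\]
where \(\|\cdot\|_\infty'\) is the sup over \(A\). Since the sup norm on \(A\) contracts under \(P^{T_0}\) only up to leakage of \(e^{-cn}\), iterating \(m\asymp\log n\) times might appear to reintroduce the logarithm. It does not: the Dirichlet-form increments telescope, \(\sum_j\) over all \(j\le mT_0\) of \(\bigl(\|P^jf\|^2-\|P^{j+1}f\|^2\bigr)\le \|f\|^2\), and together with the monotonicity from Step 1 this gives \(\|f\|^2-\|P^{T_0}f\|^2\le 2T_0\mathcal E\). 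We then need to show \(\|P^{T_0}f\|^2\le \tfrac12\|f\|^2+Ce^{-cn}\|f\|_\infty^2\). Proving this \(L^2\) contraction for mean-zero \(f\), via an \(L^2\) version of the TV bound (Cauchy–Schwarz against the kernel \(P^{T_0}(x,\cdot)\)), is the step I expect to be hardest. Once it holds, rearranging yields \(\Var_\pi(f)\le 4T_0\mathcal E_\varepsilon(f,f)+Ce^{-cn}\|f\|_\infty^2\), which is \eqref{eq:no-log-defective-poincare}.
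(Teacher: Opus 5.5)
There is a genuine gap in Step~3, and it comes from a false premise rather than a merely missing estimate. You claim that \cref{coupling}(ii) with $\delta=1/4$ gives $d_{\mathrm{TV}}(xP_\varepsilon^{T_0},\pi_\varepsilon)\le 1/4$ at $T_0=\lceil Cn^2\rceil$. It does not. The time in that theorem is $\lceil C_{B,\varepsilon}n^2\log(n^2/\delta)\rceil$, which for constant $\delta$ is of order $n^2\log n$; the $\log n^2$ is the diameter factor $\bar d\le n^2$ from \cref{le-21}. This cannot be improved in general. If $B$ contains points with $\beta=0$, the chain is a lazy random walk on the Johnson graph on $\Omega_k$. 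The overlap of $X_t$ with the initial configuration then stays detectably atypical for order $N\log N\asymp n^2\log n$ steps.

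The $L^2$ contraction you single out as the hardest step is actually straightforward. Take $\pi h=0$ and $x\in A=\Gamma_{p,\varepsilon/4}$. Cauchy--Schwarz with weights $|P_\varepsilon^T(x,y)-\pi(y)|$, together with $\sum_x\pi(x)P_\varepsilon^T(x,y)=\pi(y)$, gives $\sum_{x\in A}\pi(x)|P_\varepsilon^Th(x)|^2\le 4\delta\norm{h}_{L^2(\pi)}^2$. The set $A^c$ contributes at most $4\pi(A^c)\norm{f}_\infty^2$. But feed this into your telescoping bound $\norm{h}^2-\norm{Q^{T_0}h}^2\le 2T_0\,\mathcal E_\varepsilon(f,f)$ with the honest $T_0\asymp n^2\log n$. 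You get only $\Var_{\pi_\varepsilon}(f)\le Cn^2\log n\,\mathcal E_\varepsilon(f,f)+Ce^{-cn}\norm{f}_\infty^2$, which is exactly the logarithm you set out to remove.

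Two smaller slips. First, with $\delta=e^{-cn}$ one has $T\asymp n^3$, so the naive bound of Step~1 is of order $n^3$, not $n^2\log n$. Second, doubling $T$ does not transfer mixing to the lazy chain, because $\mathrm{Bin}(2T,1/2)<T$ with probability about $1/2$. Use, e.g., $4T$ together with monotonicity of $t\mapsto d_{\mathrm{TV}}(xP^t_\varepsilon,\pi)$.

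The missing idea is to separate two things that \cref{coupling} certifies: the decay \emph{rate} $\lambda_B/n^2$ in the bound $n^2e^{-\lambda_BT/n^2}$, and the burn-in $n^2\log n^2$. Any argument that charges the whole mixing time against $\mathcal E_\varepsilon$, as telescoping does, pays for the burn-in. The paper separates them spectrally:
\begin{enumerate}
\item Take $\delta=e^{-an}$ with $a<c_{\mathrm{mix}}$. Then $\tau\le Kn^3$, and (your Step~2, which matches the paper) $\norm{P_\varepsilon^\tau h}_{L^2(\pi)}^2\le Ce^{-bn}\norm{f}_\infty^2$.
\item Expand $h$ in an eigenbasis of the reversible $P_\varepsilon$, with spectral measure $\nu_h$, and split at $1-\theta/n^2$.
\item Eigenvalues $\lambda\le 1-\theta/n^2$ carry mass at most $(n^2/\theta)\,\mathcal E_\varepsilon(f,f)$, since $1-\lambda\ge\theta/n^2$ there. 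This range includes all negative eigenvalues, so no lazy version is needed.
\item Eigenvalues above the threshold satisfy $\lambda^{2\tau}\ge e^{-4K\theta n}$. Their mass is therefore at most $e^{4K\theta n}\norm{P_\varepsilon^\tau h}^2\le Ce^{-(b-4K\theta)n}\norm{f}_\infty^2$.
\item Choosing $\theta<b/(8K)$ gives \eqref{eq:no-log-defective-poincare} with constant $n^2/\theta$.
\end{enumerate}
This works because $\log(n^2/\delta)\asymp n$ here, so the burn-in is invisible at the exponential scale. Your Cauchy--Schwarz contraction at time $\tau$ could be fed into the same split. Some step of this kind, separating rate from burn-in, is indispensable.
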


For a bounded function $f$ on \(\Omega_k\), define the Dirichlet form of the Kawasaki kernel:
\[
        \mathcal E_P(f,f)
        =\frac12\sum_{x,y\in\Omega_k}
        \mu_\beta^k(x)P(x,y)\bigl(f(y)-f(x)\bigr)^2.
\]

\begin{corollary}\label{cor:full-defective-poincare}
Under the assumptions of \cref{coupling}, there exist constants
\(C_B<\infty\), \(c_B>0\), and \(n_0=n_0(B)<\infty\) such that,
uniformly over \((p,\beta)\in B\), for every \(n\ge n_0\) and every
bounded \(f:\Omega_k\to\mathbbm R\),
\[
        \Var_{\mu_\beta^k}(f)
        \le C_Bn^2\mathcal E_P(f,f)
        +C_Be^{-c_Bn}\norm{f}_\infty^2.
\]
\end{corollary}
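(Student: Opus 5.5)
The plan is to deduce \cref{cor:full-defective-poincare} from \cref{lem:defective-poincare}, applied at one fixed $\varepsilon=\varepsilon_0(B)$, together with the bound \cref{eq:regularregion} on the mass of the irregular set. Write $\Gamma=\Gamma_{p,\varepsilon}$, $\mu=\mu_\beta^k$ and $\pi_\varepsilon=\mu(\cdot\mid\Gamma)$. Since $\Gamma\supseteq\Gamma_{p,\varepsilon/4}$, \cref{eq:regularregion} gives
\[
\mu(\Gamma^c)\le C e^{-cn}.
\]
The variance is invariant under adding constants, so I will replace $f$ by $g=f-\E_{\pi_\varepsilon}f$. This function satisfies $\|g\|_\infty\le 2\|f\|_\infty$ and $\Var_\mu(f)=\Var_\mu(g)\le \E_\mu g^2$.

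The first step is to split the second moment:
\[
\E_\mu g^2=\mu(\Gamma)\,\E_{\pi_\varepsilon}g^2+\sum_{x\notin\Gamma}\mu(x)g(x)^2
\le \Var_{\pi_\varepsilon}(f)+4\|f\|_\infty^2\,\mu(\Gamma^c).
\]
The second term is already of the required form $Ce^{-cn}\|f\|_\infty^2$. For the first term I will apply \cref{eq:no-log-defective-poincare} to $f|_\Gamma$, which gives
\[
\Var_{\pi_\varepsilon}(f)\le C n^2\mathcal E_\varepsilon(f|_\Gamma,f|_\Gamma)+Ce^{-cn}\|f\|_\infty^2 .
\]

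The next step is to compare Dirichlet forms. For $x\ne y$ both in $\Gamma$ we have $P_\varepsilon(x,y)=P(x,y)$. The extra holding mass of $P_\varepsilon$ contributes nothing, since $(f(x)-f(x))^2=0$. Hence
\[
\mathcal E_\varepsilon(f,f)=\frac{1}{2\mu(\Gamma)}\sum_{x,y\in\Gamma}\mu(x)P(x,y)(f(y)-f(x))^2\le \frac{1}{\mu(\Gamma)}\mathcal E_P(f,f).
\]
Here the only change is that the sum has been extended to all pairs, each with a nonnegative term. Because $\mu(\Gamma)\ge 1/2$ for $n\ge n_0$, combining the three displays gives
\[
\Var_\mu(f)\le 2Cn^2\mathcal E_P(f,f)+C'e^{-cn}\|f\|_\infty^2 .
\]
Since $\varepsilon$ is fixed as a function of $B$, all constants depend only on $B$.

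There is no real obstacle, as the argument is elementary given the earlier results. The only points needing care are two. First, one must centre at the $\pi_\varepsilon$-mean rather than the $\mu$-mean, so that the restricted variance appears exactly. Second, the hard-wall modification of the kernel must not increase the Dirichlet form, which holds because it only adds holding moves. Uniformity over $(p,\beta)\in B$ is inherited directly from \cref{coupling} and \cref{lem:defective-poincare}.
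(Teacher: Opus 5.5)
Your proposal is correct and follows the paper's proof essentially step for step. The paper also centres at $m_\Gamma=\pi(f)$ to get $\Var_\mu(f)\le\mu(\Gamma)\Var_\pi(f)+4\mu(\Gamma^c)\norm{f}_\infty^2$, applies \cref{lem:defective-poincare} to $f|_\Gamma$, and compares Dirichlet forms using that the hard-wall holding mass contributes nothing. The only cosmetic difference is that the paper keeps the factor $\mu(\Gamma)$, so that $\mu(\Gamma)\mathcal E_\varepsilon(f,f)\le\mathcal E_P(f,f)$ directly, whereas you drop it and then use $\mu(\Gamma)\ge 1/2$; both versions are valid.
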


It is known that (weak) Poincar\'e inequality for the Glauber dynamics can be used to deduce concentration inequalities for unconditional ERGMs; see \cite{Sam2020}, \cite{fang2025conditional}, and \cite{winstein2025}. 
It turns out that the weak Poincar\'e inequality for the Kawasaki algorithm yields concentration
inequalities for the conditional two star ERGM. 

We use the following Hoeffding-type decomposition under weak dependence as in \cite{Sam2020} and \cite{fang2025conditional}.
For \(e\in\mathcal I\), write
\(
        \xi_e=X_e-p .
\)
Note that $\E_{\mu^k_\beta}\xi_e=0$.
For an ordered tuple \(I=(i_1,\ldots,i_m)\in\mathcal I^m\), let
\(\mathcal P(I)\) denote the set of partitions of the set of positions
\([m]=\{1,\ldots,m\}\).  If \(J\subseteq [m]\), set
\(
        m_J(I)
        =
        \E_{\mu_\beta^k}
        \prod_{r\in J}\xi_{i_r}.
\)
For a partition \(P\in\mathcal P(I)\), let
\(
        M(P)=|\{J\in P: |J|\ge2\}|.
\)
If \(J=\{r\}\) is a singleton block, we write
\(
        \xi_{i_J}=\xi_{i_r}.
\)
Define \(g_\varnothing:=1\).  For \(I=(i_1,\ldots,i_m)\), \(m\ge1\), define
\begin{align}\label{hoff}
    g_I(X)
    =&
    \sum_{P\in\mathcal P(I)}
    (-1)^{M(P)}M(P)!
    \prod_{\substack{J\in P\\ |J|=1}}
    \xi_{i_J}
    \prod_{\substack{J\in P\\ |J|\ge2}}
    m_J(I).
\end{align}
It can be verified as for the unconditional ERGM case in \cite{Sam2020} and \cite{fang2025conditional} that
\[
        \E_{\mu_\beta^k} g_I(X)=0,
        \qquad I\ne\varnothing ,
\]
\ben{\label{eq:gRecursion}
        g_I
        =
        \prod_{r=1}^m \xi_{i_r}
        -
        \sum_{\substack{B\subset[m]\\ |B|\ge 2}}
        m_B(I)\,g_{I\setminus B}.
}
Moreover, if \(I=(i_1,\ldots,i_m)\) has pairwise distinct
coordinates, then, for every collection of distinct positions
\(r_1,\ldots,r_j\in[m]\),
\begin{align}\label{eq:appell-properties}
      D_{i_{r_1}}\cdots D_{i_{r_j}}g_I
        =
        g_{I\setminus\{r_1,\ldots,r_j\}}, 
\end{align}
where 
$D_e F(x):=F(x^{e\leftarrow 1})-F(x^{e\leftarrow 0})$, $x^{e\leftarrow a}$ is obtained by replacing $x_e$ by $a$, and 
\(I\setminus\{r_1,\ldots,r_j\}\) denotes the ordered tuple obtained from
\(I\) by deleting the coordinates in the indicated positions. 

For \(d\ge1\), let \(A\) be a \(d\)-tensor indexed by $\mathcal I^d$ and set
\(
        \norm{A}_2
        =\left(\sum_{I\in\mathcal I^d}A_I^2\right)^{1/2}.
\)
Moreover, assume that \(A_I=0\) whenever two coordinates of \(I\) coincide.  
Define
\ben{\label{eq:fdA}
f_{d,A}(X)
    = \sum_{I\in\mathcal I^d} A_I g_I(X).
}
Since all moments $m_J(I)$ in
\eqref{hoff} are bounded by $1$,
\begin{equation}\label{eq:supnorm-basic}
    \norm{f_{d,A}}_\infty
    \le
    C_dN^{d/2}\norm{A}_2
    \le
    C_dn^d\norm{A}_2,
\end{equation}
where $C_d$ is a constant depending only on $d$.
The following result provides concentration inequalities for $f_{d,A}(X)$ under $\mu^k_\beta$.

\begin{theorem}\label{thm:higher-order-concentration-log}
Assume the hypotheses of Theorem~\ref{coupling}, and set \(\mu:=\mu_\beta^k\). Let \(f_{d,A}\) be defined by
\eqref{eq:fdA}.  The following estimates hold.

\smallskip
\noindent
\textup{(i)} For every fixed $d\ge1$ and $M>0$, there are constants
$C_{d,M}=C_{d,M}(B)$, $c_{d,M}=c_{d,M}(B)>0$ and
$n_0=n_0(d,M,B)<\infty$, depending only on
$d,M$, and $B$, such that, uniformly over
$(p,\beta)\in B$, for every  \(n\ge n_0\) and
\(
    2\le q\le c_{d,M}n/ \log n,
\)
one has
\begin{equation}\label{eq:moment-bound}
    \norm{f_{d,A}}_{L^q(\mu)}
    \le
    C_{d,M} q^d\norm{A}_2
    +
    C_{d,M}n^{-M}\norm{f_{d,A}}_\infty.
\end{equation}
%In particular, for every fixed $q\ge1$,
%\[
%    \norm{f_{d,A}}_{L^q(\mu)}
%    \le
%    C_{d,q,M}\norm{A}_2
%    +
%    C_{d,q,M}n^{-M}\norm{f_{d,A}}_\infty.
%\]

\smallskip
\noindent
\textup{(ii)} Consequently, for every $t\ge0$,
\begin{equation}\label{eq:tail-bound}
    \mu\left(
    \abs{f_{d,A}}
    \ge
    t+C_{d,M}n^{-M}\norm{f_{d,A}}_\infty
    \right)
    \le
    2\exp\left\{
    -c_{d,M}
    \min\left[
    \left(\frac{t}{\norm{A}_2}\right)^{1/d},
    \frac{n}{\log n}
    \right]
    \right\}.
\end{equation}
When $\norm{A}_2=0$, the right-hand side is interpreted as $0$ for $t>0$.
\end{theorem}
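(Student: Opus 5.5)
We prove part (i) by induction on $d$, using the weak Poincar\'e inequality of \cref{cor:full-defective-poincare} applied to powers of $f_{d,A}$. Part (ii) then follows from (i) by Markov's inequality with an optimized choice of $q$.

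\textbf{Step 1: a $q$-th moment Poincar\'e bound.} Write $f=f_{d,A}$. Following \cite{Sam2020} and \cite{fang2025conditional}, we apply \cref{cor:full-defective-poincare} to $|f|^{q/2}$. Together with a chain-rule estimate $|a^{q/2}-b^{q/2}|\le \tfrac q2|a-b|(a^{q/2-1}+b^{q/2-1})$ and reversibility, this gives
\[
\|f\|_{L^q}^q\le \|f\|_{L^{q/2}}^q+Cn^2q^2\,\E_\mu\Big[|f|^{q-2}\sum_{y}P(X,y)(f(y)-f(X))^2\Big]+Ce^{-cn}\|f\|_\infty^q .
\]
The middle term is $|f|^{q-2}$ times a local gradient. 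A Kawasaki move $s\leftrightarrow t$ changes $f$ by $D_sf$ and $D_tf$ up to second-order terms $D_sD_tf$. Since the proposal probability is $1/(k(N-k))\asymp n^{-4}$, we get
\[
n^2\sum_yP(x,y)(f(y)-f(x))^2\lesssim n^{-2}\sum_{e}(D_ef)^2+n^{-2}\sum_{s,t}(D_sD_tf)^2 .
\]
By \eqref{eq:appell-properties}, $D_ef=\sum_{I\ni e}A_I g_{I\setminus e}$. So $\sum_e(D_ef)^2$ is a sum of squares of degree-$(d-1)$ functions of the form $f_{d-1,A^{(e)}}$, with $\sum_e\|A^{(e)}\|_2^2\le d^2\|A\|_2^2$. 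The $n^{-2}$ is exactly compensated by the fact that the proposal picks one of $N\asymp n^2$ edges uniformly, so the effective gradient is $\sum_e (D_ef)^2$ with constant order coefficient.

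\textbf{Step 2: H\"older and induction.} Apply H\"older to $\E[|f|^{q-2}G]$ with $G=\sum_e(D_ef)^2$, giving $\le\|f\|_q^{q-2}\|G\|_{q/2}$. Then use Minkowski, $\|G\|_{q/2}\le \|(\sum_e (D_ef)^2)\|_{q/2}$, and the induction hypothesis in a vector-valued form, i.e. the bound for $\big\|\,(\sum_e f_{d-1,A^{(e)}}^2)^{1/2}\big\|_q$. To get this form we prove the induction for Hilbert-space valued coefficient tensors, which is the same argument. The result is $\|G\|_{q/2}^{1/2}\lesssim q^{d-1}\|A\|_2+n^{-M}\|f\|_\infty$. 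Solving the quadratic-type inequality $x^q\le x_{q/2}^q+Cq^2x^{q-2}y^2+\cdots$ yields $\|f\|_q\le \|f\|_{q/2}+Cq\,y$. Iterating over dyadic $q$ gives $\|f\|_q\lesssim q\cdot q^{d-1}\|A\|_2$, plus error terms. The base case is $\|f\|_2$, which comes directly from \cref{cor:full-defective-poincare}. Lower-order corrections in \eqref{eq:gRecursion} are absorbed because the moments $m_B(I)$ are bounded.

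\textbf{Main obstacle.} The delicate point is that the error $Ce^{-cn}\|f\|^q_\infty$ must remain negligible relative to $(n^{-M}\|f\|_\infty)^q$. This needs $e^{-cn/q}\le n^{-M}$, which is exactly the restriction $q\le c_{d,M}n/\log n$. A second issue is that the second-order Kawasaki terms $D_sD_tf$ also contribute, but these are degree $d-2$ and of smaller order, so the same induction handles them.

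\textbf{Part (ii).} For (ii), apply Markov's inequality with $q\asymp\min\{(t/\|A\|_2)^{1/d},n/\log n\}$. This gives
\[
\mu(|f|\ge t+Cn^{-M}\|f\|_\infty)\le (Cq^d\|A\|_2/t)^q\le e^{-q}.
\]
Adjusting constants then yields \eqref{eq:tail-bound}.
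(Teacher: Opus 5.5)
Your proposal is correct and follows essentially the paper's proof: induction on $d$; the weak Poincar\'e inequality applied to $|f|^{q/2}$, combined with the chain rule, reversibility and H\"older; a dyadic iteration in $q$ that costs one factor of $q$ per order; exact order lowering of a Kawasaki exchange through \eqref{eq:appell-properties}, with the mixed $D_sD_t$ terms of lower order; the restriction $q\le c\,n/\log n$ so that $e^{-cn/q}\le n^{-M}$; and Markov's inequality with optimized $q$.

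The differences are minor. You use the full-measure inequality of Corollary~\ref{cor:full-defective-poincare} directly, which is legitimate since $P(x,x^{s\leftrightarrow t})\le 1/(k(N-k))$ at every $x\in\Omega_k$; this spares the paper's passage through $\pi=\mu(\cdot\,|\,\Gamma_{p,\varepsilon})$ and the final $\pi\to\mu$ transfer. Your Hilbert-valued induction is more than you need: Minkowski in $L^{q/2}$ gives $\|\sum_e Y_e^2\|_{q/2}\le\sum_e\|Y_e\|_q^2$ with $Y_e=D_ef$, so the scalar hypothesis suffices. Finally, absorb the lower-order defect terms into $\|A\|_2$ via \eqref{eq:supnorm-basic}, as the paper does, rather than writing them as $n^{-M}\|f\|_\infty$; the slice sup-norm of $f$ does not control $\|D_ef\|_\infty$.
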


Using the concentration inequality for $f_{d,A}(X)$ above, we can deduce optimal concentration inequalities for subgraph counts. We use triangle counts as an illustration in the next result.

\begin{corollary}\label{cor:triangle-variance}
Let \(  T_\triangle(X)=\sum_{1\le u<v<w\le n}X_{uv}X_{uw}X_{vw}\) be the number of triangles.
Under the assumptions of \cref{coupling}, there is a constant \(C_B<\infty\)
such that, uniformly over \((p,\beta)\in B\) and all sufficiently large
\(n\),
\[
        \Var_{\mu_\beta^k}\bigl(T_\triangle\bigr)\le C_Bn^3.
\]
\end{corollary}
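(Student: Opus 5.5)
We expand $T_\triangle$ in the Hoeffding-type basis \eqref{hoff} and then apply \cref{thm:higher-order-concentration-log}(i) with $q=2$ to each piece. Write $X_e=p+\xi_e$. For a triangle $\{a,b,c\}$ of edge-positions we have
\[
X_aX_bX_c=p^3+p^2(\xi_a+\xi_b+\xi_c)+p(\xi_a\xi_b+\xi_a\xi_c+\xi_b\xi_c)+\xi_a\xi_b\xi_c .
\]
Summing over triangles, the linear term is $p^2(n-2)\sum_e\xi_e$, and $\sum_e\xi_e=e(X)-k=0$ on $\Omega_k$, so it vanishes identically. This cancellation is what removes the $n^4$ contribution seen in the unconditional model.

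Next I replace the products of $\xi$'s by the $g_I$ using the recursion \eqref{eq:gRecursion}. For distinct $a,b$ we have $\xi_a\xi_b=g_{(a,b)}+m_{\{1,2\}}$. For three distinct positions,
\[
\xi_a\xi_b\xi_c=g_{(a,b,c)}+m_{ab}g_c+m_{ac}g_b+m_{bc}g_a+m_{abc},
\]
with $g_e=\xi_e$. This gives
\[
T_\triangle-\E T_\triangle=f_{3,A_3}+f_{2,A_2}+f_{1,A_1},
\]
where $A_3$ is the symmetrized triangle indicator divided by $3!$, so $\norm{A_3}_2^2\asymp n^3$. The tensor $A_2$ has entry of order $p$ on each pair of edges sharing a vertex, so $\norm{A_2}_2^2\asymp n^3$. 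The tensor $A_1$ is
\[
(A_1)_e=\sum_{\triangle\ni e}\Bigl(\sum\text{ of }m_{\cdot\cdot}\text{ over the two pairs containing }e\Bigr).
\]

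The constants are harmless. By \eqref{eq:supnorm-basic}, $\norm{f_{d,A}}_\infty\le C n^{3}\cdot n^{3/2}$, so choosing $M$ large, say $M=10$, makes the error term $n^{-M}\norm{f}_\infty$ in \eqref{eq:moment-bound} negligible. With $q=2$, \eqref{eq:moment-bound} then yields $\Var(f_{3,A_3})+\Var(f_{2,A_2})\le C n^3$.

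The main obstacle is the linear term $f_{1,A_1}$, whose coefficients involve the pair correlations $m_{\{a,b\}}=\E\xi_a\xi_b$. If these were only $O(1)$ we would get $\norm{A_1}_2^2\asymp n^2\cdot n^2$, which is too big. Two facts rescue us. First, by exchangeability of $\mu^k_\beta$ under vertex permutations, $m_{ab}$ takes only two values: $\gamma_1$ when $a\sim b$ and $\gamma_0$ when $a,b$ are disjoint. Since each triangle containing $e$ contributes two adjacent pairs, $(A_1)_e=2(n-2)\gamma_1$ is constant in $e$. Hence $f_{1,A_1}=2(n-2)\gamma_1\sum_e\xi_e=0$ on $\Omega_k$. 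So the linear term vanishes exactly, with no estimate on the correlations needed. I would double-check the bookkeeping of the recursion for triples, in particular that the terms $m_{ab}g_c$ carry exactly these coefficients. If a residual non-constant coefficient survived, I would fall back on bounding $\gamma_1=O(1/n)$ by applying \eqref{eq:moment-bound} with $d=2$ to $\sum_{a\sim b}\xi_a\xi_b$. Adding the three pieces with Cauchy–Schwarz gives $\Var_{\mu^k_\beta}(T_\triangle)\le C_Bn^3$.
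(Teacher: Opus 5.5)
Your proposal is correct and follows the paper's proof. The paper also writes $T_\triangle-\E T_\triangle=p f_{2,A^{(2)}}+f_{3,A^{(3)}}$ with $A^{(2)}_{e,f}=\frac12\mathbf 1_{\{e\sim f\}}$ and $A^{(3)}$ the triangle indicator divided by $6$, then applies \cref{thm:higher-order-concentration-log} with $q=2$ (and $M=4$); your vertex-exchangeability argument is exactly what justifies the vanishing linear term that the paper leaves to a direct computation. One small bookkeeping correction: in $\xi_a\xi_b\xi_c=g_{(a,b,c)}+m_{ab}g_c+m_{ac}g_b+m_{bc}g_a+m_{abc}$ the coefficient of $g_c$ is the correlation of the pair \emph{not} containing $c$, so $(A_1)_e=(n-2)\gamma_1$ rather than $2(n-2)\gamma_1$, which is still constant in $e$ and changes nothing.
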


\begin{proof}
%Let \(Q\) and \(R\) be the sums of \(\xi_e\xi_f\) over unordered adjacent edge-pairs and of \(\xi_e\xi_f\xi_g\) over triangles, respectively.  
A direct computation using
\(\sum_e\xi_e=0\) and \(X_e=p+\xi_e\), together with
the recursion \cref{eq:gRecursion}, gives
\[
T_\triangle-\E T_\triangle=pf_{2,A^{(2)}}+f_{3,A^{(3)}},
\]
where
\[
A^{(2)}_{e,f}=\frac12\mathbf1_{\{e\sim f\}},\qquad
A^{(3)}_{e,f,g}=\frac16\mathbf1_{\{e,f,g\text{ form a triangle}\}}.
\]
We can also calculate
\(\norm{A^{(2)}}_2^2=N(n-2)/2\) and
\(\norm{A^{(3)}}_2^2=\binom n3/6\).  Hence
\cref{thm:higher-order-concentration-log,eq:supnorm-basic}, with \(q=2\)
and \(M=4\), yield
\(\norm{T_\triangle-\E T_\triangle}_2\le C_Bn^{3/2}\), proving the claim.
\end{proof}

\section{Proof of main results}\label{sec:proof-main}
In this section, we prove the main results in \cref{sec:pre}.
We first establish the variational threshold and the corresponding
cut-metric concentration of the conditional model.  We then turn to the
Kawasaki dynamics.  The proof of the metastable mixing theorem uses several
auxiliary estimates, whose statements are collected below and whose proofs are
deferred to later sections.  Finally, we derive the weak Poincar\'e
inequality and the higher-order concentration bound from the mixing theorem.

To prove \cref{prop:1}, we need the following lemma.

\begin{lemma}\label{lem:sharp}
For every \(p\in(0,1)\) and \(q\in[0,1]\),
\[
    D(q\|p)\ge c_p(q-p)^2,
\]
where \(c_p\) is defined in \cref{prop:1}.  The constant is optimal: equality holds at \(q=p\), and, when \(p\ne1/2\), also at \(q=1-p\).
\end{lemma}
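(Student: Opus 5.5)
The plan is to study the ratio $h(q):=D(q\|p)/(q-p)^2$ on $[0,1]\setminus\{p\}$ and show that its infimum equals $c_p$. The infimum is attained at $q=1-p$ when $p\ne 1/2$, and approached as $q\to p$ when $p=1/2$. The equality case at $q=p$ is trivial since both sides vanish. For $q=1-p$ (with $p\neq1/2$) a direct computation gives
\[
D(1-p\|p)=(1-p)\log\tfrac{1-p}{p}+p\log\tfrac{p}{1-p}=(1-2p)\log\tfrac{1-p}{p}=(2p-1)\log\tfrac{p}{1-p},
\]
while $(1-2p)^2=(2p-1)^2$. Hence $h(1-p)=\log\frac{p}{1-p}/(2p-1)=c_p$, which establishes both the equality claim and the optimality of $c_p$ once the inequality itself is proved.

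For the inequality, I would set $\phi(q):=D(q\|p)-c_p(q-p)^2$ and show $\phi\ge0$ on $[0,1]$. We have $\phi(p)=0$ and $\phi'(q)=\log\frac{q}{1-q}-\log\frac{p}{1-p}-2c_p(q-p)$, so $\phi'(p)=0$. Moreover $\phi''(q)=\frac1{q(1-q)}-2c_p$. Since $\frac{1}{q(1-q)}$ is symmetric about $1/2$ and convex, $\phi''$ is positive near the endpoints and possibly negative on a symmetric middle interval. Thus $\phi'$ is increasing, then decreasing, then increasing, so it has at most three zeros. Two of them are already known when $p\ne1/2$, namely $q=p$ and $q=1-p$. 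The second holds because $\log\frac{1-p}{p}-\log\frac{p}{1-p}=2\log\frac{1-p}{p}$ and $2c_p(1-2p)=2\log\frac{1-p}{p}$. Because $\phi'$ is antisymmetric about $1/2$ in this sense (indeed $\phi'(q)+\phi'(1-q)=-2\log\frac{p}{1-p}-2c_p(1-2p)=0$), the third zero is $q=1/2$.

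Sign analysis then does the rest. Take $p<1/2$ without loss of generality, using the symmetry $p\leftrightarrow1-p$, under which $c_p$ is invariant. Since $\phi'\to-\infty$ as $q\to0$ and $\phi'\to+\infty$ as $q\to1$, and the zeros $p<1/2<1-p$ are simple, $\phi'$ is negative on $(0,p)$, positive on $(p,1/2)$, negative on $(1/2,1-p)$, and positive on $(1-p,1)$. So $\phi$ has local minima at $p$ and $1-p$, and $\phi(p)=0=\phi(1-p)$. Hence $\phi\ge0$ on $(0,1)$, and by continuity also at the endpoints $q\in\{0,1\}$, where $D$ is finite.

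The main obstacle is making the zero-counting rigorous. This requires the shape of $\phi''$: it is positive, then negative, then positive when $2c_p>4$, i.e.\ $c_p>2$, which holds for $p\ne1/2$ by strict convexity arguments or direct comparison. If instead $\phi''\ge0$ everywhere, the conclusion is immediate. Simplicity of the zeros should follow because $\phi''(p)\neq0$ generically; alternatively one can avoid needing it by arguing purely from the number of sign changes. For $p=1/2$, $c_p=2$ and $\phi''(q)=\frac1{q(1-q)}-4\ge0$, so $\phi$ is convex with $\phi(1/2)=\phi'(1/2)=0$, giving the claim directly.
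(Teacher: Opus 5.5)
Your proof is the paper's proof. It uses the same shape of $\phi''(q)=1/(q(1-q))-2c_p$, the same three zeros $p<1/2<1-p$ of $\phi'$ (for $p<1/2$), the same sign pattern, and $\phi(p)=\phi(1-p)=0$. The one weak step is the simplicity of the zeros, since ``$\phi''(p)\neq0$ generically'' proves nothing for a fixed $p$. The paper closes this step by showing $2<c_p<1/(2p(1-p))$ via $2(x-1)/(x+1)<\log x<(x-x^{-1})/2$ with $x=(1-p)/p$, which places the inflection points $q_\pm$ strictly inside $(p,1-p)$. You can also get it from your own zero count: $\phi'$ is strictly monotone on each of $(0,q_-]$, $[q_-,q_+]$, $[q_+,1)$, and $1/2$ is already a zero in the middle piece, which forces $p<q_-$ and $q_+<1-p$, so each zero is a strict sign change.
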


\begin{proof}
For \(p=1/2\), the function \(F(q):=D(q\|1/2)-2(q-1/2)^2\) satisfies \(F(1/2)=F'(1/2)=0\) and
\(
    F''(q)=1/[q(1-q)]-4\ge0,
\)
so \(F\ge0\).  By the symmetry
\(D(q\|p)=D(1-q\|1-p)\), it remains to consider \(p<1/2\).

Set \(c=c_p\) and \(F(q):=D(q\|p)-c(q-p)^2\). With
\(x=(1-p)/p>1\), the elementary bounds
\(
    2(x-1)/(x+1)<\log x<(x-x^{-1})/2
\)
give
\(
    2<c<1/(2p(1-p)).
\)
Consequently, the equation \(F''(q)=0\), where
\(
    F''(q)=1/(q(1-q))-2c,
\)
has two roots \(q_-<q_+\) satisfying
\(
    p<q_-<\frac12<q_+<1-p.
\)
Moreover,
\[
    F'(p)=F'(1/2)=F'(1-p)=0,
    \qquad
    F(p)=F(1-p)=0.
\]
Since \(F''>0\) on \((0,q_-)\cup(q_+,1)\) and \(F''<0\) on \((q_-,q_+)\), it follows that
\[
    F'<0\ \text{on }(0,p),\quad
    F'>0\ \text{on }(p,1/2),\quad
    F'<0\ \text{on }(1/2,1-p),\quad
    F'>0\ \text{on }(1-p,1).
\]
Together with \(F(p)=F(1-p)=0\), this proves \(F(q)\ge0\) for all \(q\in[0,1]\).
\end{proof}

\begin{proof}[Proof of \cref{prop:1}]
We first prove the first assertion. Fix \(W\in\calW_p\), and write
\[
U(x,y)=W(x,y)-p,
\qquad
g(x)=\int_0^1 U(x,y)\dd y.
\]
We have
\[
\int_0^1 g(x)\dd x
=
\int_{[0,1]^2} U(x,y)\dd x\dd y
=0.
\]
Moreover,
\(
d_W(x)=p+g(x),
\)
and therefore
\(
t(W)
=
\int_0^1 (p+g(x))^2\dd x
=
p^2+\int_0^1 g(x)^2\dd x.
\)
Since \(\Ip(p\mathbf{1})=0\) and \(t(p\mathbf{1})=p^2\), it follows that
\[
\mathcal{F}_{\beta}(W)-\mathcal{F}_{\beta}(p\mathbf{1})
=
\beta \int_0^1 g(x)^2\dd x-\Ip(W).
\]

It remains to compare the entropy term with the degree fluctuation
\(\int_0^1 g(x)^2\dd x\). Define
\[
R(x,y):=U(x,y)-g(x)-g(y).
\]
Because \(W\) is symmetric and \(\int_0^1g=0\), we have
\[
\int_0^1 R(x,y)\dd y=0,
\qquad
\int_0^1 R(x,y)\dd x=0.
\]
Hence
\[
\begin{aligned}
\int_{[0,1]^2} U(x,y)^2\dd x\dd y
&=
\int_{[0,1]^2}
\bigl(g(x)+g(y)+R(x,y)\bigr)^2
\dd x\dd y  \\
&=
2\int_0^1 g(x)^2\dd x
+
\int_{[0,1]^2} R(x,y)^2\dd x\dd y  \\
&\ge
2\int_0^1 g(x)^2\dd x.
\end{aligned}
\]
By \cref{lem:sharp}, we therefore get
\begin{equation}\label{eq:entropy-degree-bound}
\Ip(W)
=
\frac12\int_{[0,1]^2}\Dkl(W(x,y)\|p)\dd x\dd y
\ge
\frac{c_p}{2}\int_{[0,1]^2} U(x,y)^2\dd x\dd y
\ge
c_p\int_0^1 g(x)^2\dd x.
\end{equation}
Consequently,
\begin{equation}\label{eq:variational-gap}
\mathcal{F}_{\beta}(W)-\mathcal{F}_{\beta}(p\mathbf{1})
\le
(\beta-c_p)\int_0^1 g(x)^2\dd x.
\end{equation}
If \(\beta \le c_p\), this proves \(\mathcal{F}_{\beta}(W)\le \mathcal{F}_{\beta}(p\mathbf{1})\) for all
\(W\in\calW_p\). Thus \(W\equiv p\) is a maximizer. 

If \(\beta<c_p\) and
\(\mathcal F_\beta(W)=\mathcal F_\beta(p\mathbf 1)\), then
\eqref{eq:variational-gap} implies \(g=0\) almost everywhere, and hence
$$
\mathcal F_\beta(W)-\mathcal F_\beta(p\mathbf 1)=-I_p(W).
$$
Thus \(W\equiv p\) is the unique maximizer. It remains to consider \(\beta=c_p\). If
\(\mathcal F_{c_p}(W)=\mathcal F_{c_p}(p\mathbf 1)\), then
\eqref{eq:entropy-degree-bound} must hold with equality throughout.
In particular,
$$
D(W(x,y)\|p)=c_p(W(x,y)-p)^2
$$
for almost every \((x,y)\). By Lemma~\ref{lem:sharp}, this implies
\(W(x,y)\in\{p,1-p\}\) almost everywhere when \(p\ne1/2\), while
\(W=1/2\) almost everywhere when \(p=1/2\). Since \(W\in\mathcal W_p\),
the former case also forces \(W=p\) almost everywhere.

We now prove the second assertion. Suppose that \(\beta >c_p\). By the
definition of \(c_p\), there exists \(q\in(0,1)\), \(q\ne p\), such that
\(
\beta (q-p)^2-\Dkl(q\|p)>0.
\)
Write
\(
\delta=q-p. %a=\beta \delta^2-\Dkl(q\|p)>0.
\)
For \(\alpha\in(0,1)\), split \([0,1]\) into
\(
A=[0,\alpha], B=(\alpha,1],
\)
and define
\(
r_\alpha
=
p-\frac{2\alpha}{1-\alpha}\delta.
\)
For all sufficiently small \(\alpha>0\), we have \(r_\alpha\in(0,1)\).
Define the block graphon \(W_\alpha\) by
\[
W_\alpha(x,y)
=
\begin{cases}
p, & x,y\in A, \\[0.3em]
q, & x\in A,\ y\in B \text{ or } x\in B,\ y\in A, \\[0.3em]
r_\alpha, & x,y\in B.
\end{cases}
\]
Then \(W_\alpha\) satisfies the fixed-edge constraint. Indeed,
\[
\begin{aligned}
e(W_\alpha)
&=
\alpha^2p
+
2\alpha(1-\alpha)q
+
(1-\alpha)^2r_\alpha  \\
&=
\alpha^2p
+
2\alpha(1-\alpha)(p+\delta)
+
(1-\alpha)^2
\left(p-\frac{2\alpha}{1-\alpha}\delta\right) \\
&=p.
\end{aligned}
\]
Thus \(W_\alpha\in\calW_p\).

Next, we compute \(t(W_\alpha)\). If \(x\in A\), then
\[
d_{W_\alpha}(x)=\alpha p+(1-\alpha)q
=p+(1-\alpha)\delta.
\]
If \(x\in B\), then
\[
d_{W_\alpha}(x)
=
\alpha q+(1-\alpha)r_\alpha
=
p-\alpha\delta.
\]
Therefore
\[
\begin{aligned}
t(W_\alpha)
&=
\alpha\bigl(p+(1-\alpha)\delta\bigr)^2
+
(1-\alpha)\bigl(p-\alpha\delta\bigr)^2  \\
&=
p^2+\alpha(1-\alpha)\delta^2.
\end{aligned}
\]

The entropy term is
\[
\begin{aligned}
\Ip(W_\alpha)
&=
\frac12
\left[
\alpha^2\Dkl(p\|p)
+
2\alpha(1-\alpha)\Dkl(q\|p)
+
(1-\alpha)^2\Dkl(r_\alpha\|p)
\right] \\
&=
\alpha(1-\alpha)\Dkl(q\|p)
+
\frac{(1-\alpha)^2}{2}\Dkl(r_\alpha\|p).
\end{aligned}
\]
Since \(r_\alpha\to p\) as \(\alpha\downarrow0\), Taylor's expansion of
\(r\mapsto\Dkl(r\|p)\) at \(r=p\) gives
\[
\Dkl(r_\alpha\|p)=O\bigl((r_\alpha-p)^2\bigr)=O(\alpha^2).
\]
Hence
\[
\begin{aligned}
\mathcal{F}_{\beta}(W_\alpha)-\mathcal{F}_{\beta}(p\mathbf{1})
&=
\beta \bigl(t(W_\alpha)-p^2\bigr)-\Ip(W_\alpha) \\
&=
\alpha(1-\alpha)
\bigl[\beta \delta^2-\Dkl(q\|p)\bigr]
-
\frac{(1-\alpha)^2}{2}\Dkl(r_\alpha\|p) \\
&=
\alpha(1-\alpha)\bigl[\beta \delta^2-\Dkl(q\|p)\bigr]+O(\alpha^2).
\end{aligned}
\]
Since \(\beta \delta^2-\Dkl(q\|p)>0\), the last expression is strictly positive for all sufficiently
small \(\alpha>0\). Thus, for such \(\alpha\),
\[
\mathcal{F}_{\beta}(W_\alpha)>\mathcal{F}_{\beta}(p\mathbf{1}).
\]
Therefore the constant graphon \(W\equiv p\) is not a maximizer whenever
\(\beta >c_p\).
\end{proof}

Next, we prove \cref{theo of appro}.  The argument is a
consequence of the fixed-edge large deviation principle and the
uniqueness of the variational maximizer.

\begin{proof}[Proof of \cref{theo of appro}]
For \((p,\beta)\in K\), write
\(
        \Psi(p,\beta)
        =
        \sup_{W\in\mathcal W_p}
        \{\mathcal F_\beta(W)\}.
\)
By \cref{prop:1}, the constant graphon \(p\mathbf 1\) is the unique
maximizer, and hence
\(
        \Psi(p,\beta)=\beta p^2.
\)
For \(r>0\), set
\(
        A_r(p)
        =
        \{W\in\mathcal W_p:d_\square(W,p\mathbf 1)\ge r\}
\)
and
\[
        \Delta_r(p,\beta)
        =
        \Psi(p,\beta)
        -
        \sup_{W\in A_r(p)}
        \mathcal F_\beta(W),
\]
with the convention that the supremum over the empty set is \(-\infty\).
We first show that
$\Delta_r(p,\beta)$ is bounded away from zero uniformly for \((p,\beta)\in K\).

Since \(p\mapsto c_p\) is continuous and \(K\) is a compact subset of the
replica symmetric region, there is
\[
        \kappa_K
        :=
        \min_{(p,\beta)\in K}(c_p-\beta)>0.
\]
Fix \((p,\beta)\in K\) and \(W\in\mathcal W_p\).  Put
\[
        U(x,y)=W(x,y)-p,
        \qquad
        g(x)=\int_0^1U(x,y)\,\mathrm dy.
\]
As in the proof of \cref{prop:1},
\[
        t(W)-p^2=\int_0^1g(x)^2\,\mathrm dx,
        \qquad
        \int_{[0,1]^2}U(x,y)^2\,\mathrm dx\,\mathrm dy
        \ge 2\int_0^1g(x)^2\,\mathrm dx,
\]
and the definition of \(c_p\) gives
\[
        I_p(W)
        \ge
        \frac{c_p}{2}
        \int_{[0,1]^2}U(x,y)^2\,\mathrm dx\,\mathrm dy.
\]
Consequently, using \(\beta\ge0\),
\begin{align}
        \Psi(p,\beta)
        -\mathcal F_\beta(W)
        &=
        I_p(W)-\beta\int_0^1g(x)^2\,\mathrm dx \ge
        \frac{c_p-\beta}{2}
        \lVert W-p\mathbf 1\rVert_2^2.                 \label{eq:prop21-strong-gap}
\end{align}
The constant graphon is invariant under relabeling, and therefore
\[
        d_\square(W,p\mathbf 1)
        =\lVert W-p\mathbf 1\rVert_\square
        \le \lVert W-p\mathbf 1\rVert_2.
\]
It follows that, uniformly for \((p,\beta)\in K\),
\begin{equation}
        \Delta_r(p,\beta)
        \ge
        \frac{\kappa_K r^2}{2}.                       \label{eq:prop21-uniform-gap}
\end{equation}

We now use the large-deviation principle to prove \cref{eq:prop21-sequential-rate} along any subsequence with converging parameters. After that, we will prove the desired result \cref{eq:prop21-1} by contradiction.  Let \(n_j\to\infty\),
let
\[
        N_j=\binom{n_j}{2},
        \qquad
        k_j=N_jp_j\in\mathbb Z,
        \qquad
        (p_j,\beta_j)\in K.
\]
After passing to a subsequence, compactness of \(K\) gives
\(
        (p_j,\beta_j)\longrightarrow(p,\beta)\in K.
\)
For every event \(E\subseteq\Omega_{k_j}\), we have
\begin{equation}
        \mu_{\beta_j}^{k_j}(E)
        =
        \frac{
        \displaystyle\sum_{X\in E}
        \exp\{n_j^2\beta_jt(W_X)\}}
        {
        \displaystyle\sum_{X\in\Omega_{k_j}}
        \exp\{n_j^2\beta_jt(W_X)\}}.                 \label{eq:prop21-tilted-ratio}
\end{equation}

Let \(\nu_{n_j,k_j}\) be the uniform measure on \(\Omega_{k_j}\).  
The fixed-edge graphon large deviation principle
(\cite[Theorem~1.1]{dembo2018large}) applies for \(\nu_{n_j,k_j}\) to every sequence
\(k_j/N_j\to p\).
%.  Notice that, under the usual zero-diagonal adjacency matrix convention,
%\[
%        \int_{[0,1]^2}W_X
%        =\frac{2k_j}{n_j^2}
%        =\left(1-\frac1{n_j}\right)p_j,
%\]
%so the closed and open sets below are taken in the ambient graphon space; the
%rate function is \(I_p\) on \(\mathcal W_p\) and \(+\infty\) outside it.
Moreover,
\[
        \sup_W|\beta_jt(W)-\beta t(W)|
        \le |\beta_j-\beta|\longrightarrow0.
\]
Hence the large deviation upper bound, together with the cut-continuity of
\(t\), gives, for every closed set \(F\),
\begin{align}
        \limsup_{j\to\infty}\frac1{n_j^2}
        \log
        \sum_{\substack{X\in\Omega_{k_j}\\ W_X\in F}}
        \exp\{n_j^2\beta_jt(W_X)\}
        \le
        C_p
        +
        \sup_{W\in F\cap\mathcal W_p}
        \{\beta t(W)-I_p(W)\},                       \label{eq:prop21-upper}
\end{align}
where
\[
        C_p
        =
        \lim_{j\to\infty}\frac1{n_j^2}\log|\Omega_{k_j}|
        =\frac12h(p),
        \qquad
        h(p)=-p\log p-(1-p)\log(1-p).
\]
Likewise, if \(V\) is any open neighborhood of \(p\mathbf 1\), then the
large deviation lower bound gives
\besn{\label{eq:LDlower}
        \liminf_{j\to\infty}\frac1{n_j^2}
        \log
        \sum_{X\in\Omega_{k_j}}
        \exp\{n_j^2\beta_jt(W_X)\}
        &\ge
        C_p
        +
        \sup_{W\in V\cap\mathcal W_p}
        \{\beta t(W)-I_p(W)\} \\
        &\ge
        C_p+\beta p^2.
}

Fix the \(\eta\) in the statement of \cref{theo of appro} and define the closed set
\(
        F_{\eta/2}(p)
        =
        \{W:d_\square(W,p\mathbf 1)\ge\eta/2\}.
\)
Since
\(
        d_\square(p_j\mathbf 1,p\mathbf{1})=|p_j-p|,
\)
for all sufficiently large \(j\),
\(
        \{d_\square(W_X,p_j\mathbf 1)>\eta\}
        \subseteq
        \{W_X\in F_{\eta/2}(p)\}.
\)
Using \cref{eq:prop21-tilted-ratio}, \cref{eq:prop21-upper} and \cref{eq:LDlower}, we obtain
\begin{align}
        \limsup_{j\to\infty}\frac1{n_j^2}
        \log
        \mu_{\beta_j}^{k_j}
        \bigl(d_\square(W_X,p_j\mathbf 1)>\eta\bigr)
        &\le
        \sup_{\substack{W\in\mathcal W_p\\
        d_\square(W,p\mathbf 1)\ge\eta/2}}
        \{\beta t(W)-I_p(W)\}
        -\beta p^2\le
        -\frac{\kappa_K\eta^2}{8},                  \label{eq:prop21-sequential-rate}
\end{align}
where the last inequality is \cref{eq:prop21-uniform-gap} with
\(r=\eta/2\).

Finally, suppose that 
there would exist a subsequence such that
\[
        \mu_{\beta_j}^{k_j}
        \bigl(d_\square(W_X,p_j\mathbf 1)>\eta\bigr)
        >
        \exp\left\{-\frac{\kappa_K\eta^2}{16}n_j^2\right\}
\]
for every \(j\).  After extracting a convergent subsequence, this contradicts
\cref{eq:prop21-sequential-rate}.  Therefore there exists
\(n_{K,\eta}<\infty\) such that, for every \(n\ge n_{K,\eta}\), every
\((p,\beta)\in K\), and every admissible \(k=Np\),
\[
        \mu_\beta^k
        \bigl(d_\square(W_X,p\mathbf 1)>\eta\bigr)
        \le
        \exp\left\{-\frac{\kappa_K\eta^2}{16}n^2\right\}.
\]
Thus the proposition holds, for example, with
\(
        c_{K,\eta}=\kappa_K\eta^2/16,
        C_{K,\eta}=1.
\)
\end{proof}
The proof of the mixing result, \cref{coupling}, relies
on four auxiliary estimates.  First, cut metric closeness to the constant
graphon implies local regularity with probability at least \(1-Ce^{-cn}\) (\cref{le-1}).
Second, a chain started from a sufficiently regular configuration remains in a
regular region for an exponentially long time with high probability (\cref{co-1}).  Third,
inside the regular region, the Kawasaki path coupling is contractive in one
step (\cref{le-5}).  Fourth, an abstract stopped-contraction lemma (\cref{le-21}) converts this local
contraction into a total-variation mixing bound.  We state these estimates
below and defer their proofs to later sections and Appendix \ref{sec:degree-to-local}.
\begin{lemma}[Degree concentration]\label{le-1}
Let \(K\subset\mathscr R_{\mathrm{RS}}\) be compact.  For every fixed
\(\varepsilon>0\), there exist constants
\(\eta_0=\eta_0(K,\varepsilon)>0\), \(c=c(K,\varepsilon)>0\), and
\(C=C(K,\varepsilon)<\infty\) such that, for every fixed
\(0<\eta<\eta_0\), there exists
\(n_0=n_0(K,\varepsilon,\eta)<\infty\) for which, uniformly over
\((p,\beta)\in K\), every \(n\ge n_0\) satisfies
%\[
%    \mu^k_{\beta}\bigl(\Gamma_{p,\varepsilon}^c
%    \,\bigm|\, \square_{\eta}^k(p)\bigr)
%    \le C e^{-cn}.
%\]
\[
    \mu^k_{\beta}\bigl(\Gamma_{p,\varepsilon}
    \,\bigm|\, \square_{\eta}^k(p)\bigr)
    \ge 1-Ce^{-cn}.
\]
\end{lemma}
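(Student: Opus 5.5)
Write \(e=\{u,v\}\). Then \(d(e,X)=\deg_X(u)+\deg_X(v)-2X_e\), so \(r(e,X)=(\deg_X(u)+\deg_X(v))/(2n)+O(1/n)\). It therefore suffices to show that, conditionally on \(\square^k_\eta(p)\), with probability \(1-Ce^{-cn}\) every vertex satisfies \(|\deg_X(v)/n-p|\le\varepsilon/2\). A union bound over the \(n\) vertices reduces this to a single-vertex estimate: for each fixed \(v\),
\[
\mu^k_\beta\bigl(|\deg_X(v)-pn|>\varepsilon n/2,\ X\in\square^k_\eta(p)\bigr)\le Ce^{-2cn}\,\mu^k_\beta(\square^k_\eta(p)).
\]
The denominator \(\mu^k_\beta(\square^k_\eta(p))\) is at least \(1/2\) by \cref{theo of appro}, so it is harmless.

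The single-vertex estimate will come from a cavity and switching argument at \(v\). Condition on the graph \(X^{(v)}\) induced on \([n]\setminus\{v\}\), and let \(m=k-e(X^{(v)})\) be the number of edges at \(v\). Given \(X^{(v)}\), the neighbourhood \(S\) of \(v\) is a set of size \(m\), with weight proportional to
\[
\exp\Bigl\{\tfrac{2\beta}{n}\Bigl[\tbinom m2+\sum_{w\in S}\deg_{X^{(v)}}(w)\Bigr]\Bigr\},
\]
since adding the edge \(vw\) raises \(\deg(w)\) by one. The weight is therefore an explicit function of \(m\) and of \(\sum_{w\in S}\deg_{X^{(v)}}(w)\).

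Comparing different values of \(m\) requires moving edges between \(v\) and the rest of the graph. Do this with a switching map: replace an edge \(vw\) by an absent edge \(ab\) with \(a,b\neq v\), or do the reverse. Count the multiplicities of this map and compare the resulting weights, using that on \(\square_\eta(p)\) the degrees of the other vertices are, on average in the cut sense, close to \(pn\). The effect is to tilt \(m\) by a factor of the form
\[
\exp\bigl\{2\beta(m/n-p)\,\Delta\bigr\}\times\binom{n-1}{m}\,p^m(1-p)^{n-1-m},
\]
up to \(e^{O(\eta n)}\) errors. Conditioning on the edge count acts, after this tilt, as a Lagrange multiplier that pins the mean degree at \(p\). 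The log-probability of \(\deg(v)=qn\) is then \(-n\bigl[D(q\|p)-\beta'(q-p)^2\bigr]+O(\eta n)+o(n)\) for an effective quadratic coefficient \(\beta'\le\beta\). By \cref{lem:sharp}, \(D(q\|p)\ge c_p(q-p)^2\) and \(\beta<c_p\) uniformly on \(K\), so for \(|q-p|\ge\varepsilon/2\) the exponent is at most \(-n\,(c_p-\beta)\varepsilon^2/4+O(\eta n)\). Choosing \(\eta_0\) small relative to \(\kappa_K\varepsilon^2\) makes this \(\le -2cn\).

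The main obstacle is controlling \(\sum_{w\in S}\deg(w)\), or equivalently the interaction of \(v\) with the other degrees, without circularity. Cut closeness only gives degree regularity on average, not for the particular neighbours of \(v\). I would handle this with a two-scale argument. First, \(d_\square\le\eta\) implies that all but \(O(\sqrt\eta\, n)\) vertices have degree within \(\sqrt\eta\, n\) of \(pn\), since the set of vertices with high or low degree gives a cut-norm witness. Second, run the switching at \(v\) only against these typical vertices. The remaining \(O(\sqrt\eta\,n)\) vertices contribute at most \(O(\sqrt\eta\,n)\) to the two-star energy change, which is absorbed into the \(e^{O(\sqrt\eta n)}\) error. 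The bookkeeping of the switching ratio, showing that \(\beta'\) does not exceed \(\beta\) and that the conditioning supplies the correct linear term, is where the detailed computation (deferred to Appendix~\ref{sec:degree-to-local}) lies.
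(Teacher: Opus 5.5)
Your reduction to vertex degrees (via $d(e,X)=\deg_X(u)+\deg_X(v)-2X_e$), the union bound over vertices, the use of $\mu^k_\beta(\square^k_\eta(p))\ge 1/2$, and the plan of a one-vertex fixed-edge switching followed by \cref{lem:sharp} are exactly the paper's route (Appendix~\ref{sec:degree-to-local}). The gap is the central estimate. The paper needs the degree-$qn$ slice to have log-weight $-n[D(q\|p)-\beta(q-p)^2]+O(\eta n+\log n)$ relative to the reference slice $d_*=\lfloor p(n-1)\rfloor$. Your proposal asserts this rather than deriving it: $\Delta$ in your tilt is undefined, and ``$\beta'\le\beta$'' is exactly what must be shown. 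This is not routine bookkeeping. Since $c_p$ is the sharp constant in \cref{lem:sharp}, the coefficient must come out as exactly $\beta$; reading $\Delta=m-pn$, for instance, gives $2\beta$ and only the unconditional threshold $c_p/2$.

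The paper computes this in two pieces, with $L=|d-d_*|$ and $q_*=d_*/n$.
\begin{enumerate}
\item \emph{Counting.} The forward and maximal reverse switching numbers, e.g.\ $F_d=\binom{d}{L}\binom{N_{Bulk}-(k-d)}{L}$ and $R_d=\binom{n-1-d_*}{L}\binom{k-d_*}{L}$ when $d>d_*$, have ratio $\binom{n-1}{d}\binom{N_{Bulk}}{k-d}/[\binom{n-1}{d_*}\binom{N_{Bulk}}{k-d_*}]$. Stirling turns this into $-nD(q\|p)+O(\log n)$; the bulk binomial supplies the linear term $-n(q-p)\log\frac{1-p}{p}$, which is your ``Lagrange multiplier''.
\item \emph{Energy.} The row part $\frac1n d(d-1)+\frac2n\sum_{w\in S}b_w$ equals $nq^2+2pnq+O(\eta n+1)$. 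The bulk part changes by $4pn(q-q_*)+O(\eta n+1)$ on average over uniform switchings. Then $nq^2+2pnq-4pnq$ completes the square to $n(q-p)^2$.
\end{enumerate}
Jensen over uniform switchings, and regrouping by images (each has at most $R_d$ preimages), combine the two pieces.

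The obstacle you single out is not actually one. The cut norm bounds $\sum_{x\in S,\,y\in T}(X_{xy}-p)$ for \emph{every} pair of vertex sets. Taking $S$ to be the neighbourhood of $v$ and $T$ the bulk gives $\sum_{w\in S}b_w=p(n-1)|S|+O(\eta n^2)$, which is $O(\eta n)$ after the factor $2\beta/n$; nothing about individual neighbours is needed.

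What does need care is the bulk side of the switching. The energy of the inserted bulk nonedges (or deleted bulk edges) is governed by the average of $b_a+b_b$ over bulk nonedges (or edges), that is, by $\sum_a b_a(M-1-b_a)$ and $\sum_a b_a^2$. So you need $\sum_a b_a^2=p^2M^3+O(\eta M^3)$, which the paper gets from a weighted cut-norm bound; your $\sqrt\eta$ degree-regularity observation would also give it, with $\sqrt\eta$ errors. You also need an $O(1)$ bound for inserted edges overlapping at a common vertex.

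Finally, a switching moves $O(n)$ edges and may leave $\square^k_\eta(p)$. The paper handles this by taking preimages in $\square^k_{\eta/2}(p)$ and discarding the shell $\square_\eta^k(p)\setminus\square_{\eta/2}^k(p)$ via \cref{theo of appro}. In your unconditional formulation you can instead compare with the degree-$d_*$ slice of all of $\Omega_k$, applying the cut estimates at the image with $\eta+O(1/n)$; but this has to be stated.
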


\begin{lemma}[Confinement]\label{co-1}
Under the standing assumption \((p,\beta)\in B\subset
\mathscr R_{\mathrm{Kaw}}\), there exists \(\varepsilon_0>0\),
depending only on \(B\), such that, for every
\(0<\varepsilon\le\varepsilon_0\), there exist constants
\(\alpha_{B,\varepsilon}>0\) and \(c_{B,\varepsilon}>0\), and \(n_0=n_0(B,\varepsilon)<\infty\), such that
the following holds uniformly over \((p,\beta)\in B\) for every 
\(n\ge n_0\).  If
\((X_t)_{t\ge0}\) is the full Kawasaki chain and
\(
        X_0\in\Gamma_{p,\varepsilon/4},
\)
then
\[
        \mathbb P\left(
        X_t\in\Gamma_{p,\varepsilon/2}\ \text{for all }
        0\le t\le e^{\alpha_{B,\varepsilon} n}
        \right)
        \ge 1-e^{-c_{B,\varepsilon} n}.
\]
Moreover, if \(X_0\in\Gamma_{p,\varepsilon}\), then there are
\(C=C(B,\varepsilon)<\infty\) and an integer \(T_n\le Cn^2\) such that
\[
        \mathbb P\left(X_{T_n}\in
        \Gamma_{p,\varepsilon/2}\right)
        \ge 1-e^{-c_{B,\varepsilon} n}.
\]
\end{lemma}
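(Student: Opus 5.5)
The proof tracks degrees, since $\Gamma_{p,\varepsilon}$ is really a condition on degrees. For $e=\{u,v\}$ we have $d(e,X)=\deg_X(u)+\deg_X(v)-2X_e$. Hence
\[
r(e,X)=\frac{\deg_X(u)+\deg_X(v)}{2n}+O(n^{-1}),
\]
and $X\in\Gamma_{p,\varepsilon}$ is, up to $O(1/n)$, the statement that $\Phi_{uv}(X):=(\deg_X(u)+\deg_X(v))/(2n)\in[p-\varepsilon,p+\varepsilon]$ for every pair $u\ne v$. I will therefore control the $\binom n2$ processes $\Phi_{uv}(X_t)$ and finish with a union bound over pairs. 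Each $\Phi_{uv}$ changes by at most $1/n$ per step. It moves only if the proposed $s$ or $t$ is incident to $u$ or $v$, which happens with probability $O(1/n)$.

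\textbf{Step 1: one-step drift on a slightly larger basin.} Fix $\varepsilon_0$ small. For $X\in\Gamma_{p,2\varepsilon_0}$ and $\delta_w:=\deg_X(w)/n$, I will compute $\mathbb E[\delta_w(X_{t+1})-\delta_w(X_t)\mid X_t=X]$.

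\emph{Proposal rates.} A proposal adds an edge at $w$ with probability $\approx 2(1-\delta_w)/(n(1-p))$ and removes one with probability $\approx 2\delta_w/(np)$.

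\emph{Acceptance.} The exponent change of a swap $s\leftrightarrow t$ is
\[
\mathcal H_\beta(x^{s\leftrightarrow t})-\mathcal H_\beta(x)=\frac{2\beta}{n}\bigl(d(t,x)-d(s,x)\bigr)+O(1/n)=4\beta\bigl(r(t,x)-r(s,x)\bigr)+O(1/n).
\]
This is $O(\beta\varepsilon_0)$ on $\Gamma_{p,2\varepsilon_0}$, so linearizing the heat-bath probability gives $\tfrac12+\beta\bigl(r(t,x)-r(s,x)\bigr)+O(\varepsilon_0^2)$.

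\emph{Averaging.} Average over the partner endpoint of $t$ (respectively $s$), which is uniform among non-neighbours (respectively neighbours) of $w$. Use that the average of $r$ over a uniform occupied or unoccupied edge is $p+O(\varepsilon_0^2)+O(1/n)$, which follows from $\sum_w\delta_w=2k/n$. The goal of this computation is
\[
\mathbb E[\Delta\delta_w\mid X]\le-\frac{\lambda_B}{n^2}(\delta_w-p)+\frac{C\varepsilon_0^2}{n^2},
\qquad
\lambda_B\gtrsim\frac{1}{p(1-p)}\bigl(\tfrac12-4\beta p(1-p)\bigr)\ge c\,\kappa_B .
\]
The constant has the shape: restoring entropic term from the proposal rates, minus a self-reinforcement term $\propto\beta(\delta_w-p)$ from the acceptance bias. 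I expect this to be the main obstacle. The $\beta$-term collects contributions from both the added and the removed edge at $w$, and only a careful bookkeeping, with the $O(\varepsilon_0^2)$ errors uniform in $B$, shows that the margin in \eqref{eq:kappaB} makes the net coefficient negative. Since the drift is linear in $\delta_w$, summing over $w\in\{u,v\}$ gives the same bound for $\Phi_{uv}$ with $\delta_w-p$ replaced by $\Phi_{uv}-p$. Choosing $\varepsilon_0$ small absorbs the $\varepsilon_0^2$ error whenever $|\Phi_{uv}-p|\ge\varepsilon/4$.

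\textbf{Step 2: exponential confinement.} For a fixed pair, use the exponential function $M_t=\exp\{\theta n(\Phi_{uv}(X_t)-p)\}$ with $\theta$ a small constant. When $\Phi_{uv}\ge p+\varepsilon/4$ and $X_t\in\Gamma_{p,2\varepsilon_0}$, the negative drift plus jumps of size $1/n$ occurring with rate $O(1/n)$ make $M_t$ a supermartingale. This is a standard Freedman/Bernstein argument: $\mathbb E[e^{\theta n\Delta\Phi}]\le 1-\theta\lambda\varepsilon/(4n)+O(\theta^2/n)$.

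Decompose the trajectory into excursions above $p+\varepsilon/4$. Optional stopping shows each excursion reaches $p+\varepsilon/2$ before returning with probability at most $e^{-\theta\varepsilon n/4}$. There are at most $e^{\alpha n}$ excursion starts before time $e^{\alpha n}$. The lower side $p-\varepsilon/2$ is handled symmetrically.

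Stop the chain at the first exit from $\Gamma_{p,\varepsilon/2}\subset\Gamma_{p,2\varepsilon_0}$, so that the drift bound is always in force up to the stopping time. A union bound over $n^2$ pairs, two sides and $e^{\alpha n}$ excursions gives failure probability at most $2n^2e^{\alpha n}e^{-\theta\varepsilon n/4}\le e^{-cn}$ once $\alpha<\theta\varepsilon/8$. The $O(1/n)$ discrepancy between $r(e,\cdot)$ and $\Phi_{uv}$ is absorbed by adjusting $\varepsilon/4$ and $\varepsilon/2$ slightly.

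\textbf{Step 3: entering $\Gamma_{p,\varepsilon/2}$ from $\Gamma_{p,\varepsilon}$.} Start from $X_0\in\Gamma_{p,\varepsilon}$, assuming $\varepsilon\le\varepsilon_0$ so that $2\varepsilon\le 2\varepsilon_0$. First apply Step 2 with thresholds $\varepsilon$ and $2\varepsilon$: the chain stays in $\Gamma_{p,2\varepsilon}$, where the drift of Step 1 is valid, up to time $Cn^2$ with probability $1-e^{-cn}$. On this event the drift yields
\[
\mathbb E\bigl[e^{\theta n(\Phi_{uv}(X_{t})-p)}\bigr]\le\bigl(1-c\theta/n^2\bigr)^t e^{\theta\varepsilon n}+O(1)\quad\text{while }\Phi_{uv}\ge p+\varepsilon/4,
\]
so after $T_n=\lceil Cn^2\rceil$ steps Markov's inequality gives $\mathbb P(\Phi_{uv}(X_{T_n})>p+\varepsilon/2)\le e^{-cn}$ for $C$ large. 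The same bound holds on the lower side. A union bound over pairs proves the second assertion.
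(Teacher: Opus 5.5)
Your Step~1 is where the argument breaks. You flag it as the main obstacle but do not carry it out, and the inequality you aim for there is false.

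The global fact that $r$ averages to $p+O(h^2+n^{-1})$ over $E_1$ and over $E_0$ only handles the proposed edge \emph{not} incident to $w$. For the incident edge, the relevant averages are different:
for $t=\{w,z\}$ with $z$ a uniform non-neighbour, the average of $r$ is $(\delta_w+b_w)/2$; for $s=\{w,z\}$ with $z$ a uniform neighbour, it is $(\delta_w+a_w)/2+O(n^{-1})$. Here $a_w,b_w$ are the mean normalized degrees of the neighbours and non-neighbours of $w$. They are tied only by $p(a_w-p)+(1-p)(b_w-p)=O(h^2+n^{-1})$. Carrying your linearization through on $\Gamma_{p,h}$ gives
\[
\mathbb E[\Delta\delta_w\mid X]=\frac{2}{n^2}\Bigl[-\frac{1-2\beta p(1-p)}{2p(1-p)}(\delta_w-p)+\frac{\beta}{2}\bigl((a_w-p)+(b_w-p)\bigr)\Bigr]+O\Bigl(\frac{h^2}{n^2}+\frac{1}{n^3}\Bigr).
\]
For $p\ne1/2$ the cross term equals $\frac{\beta(1-2p)}{2(1-p)}(a_w-p)+O(h^2)$. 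It can be of order $\varepsilon_0$ inside $\Gamma_{p,2\varepsilon_0}$: take $p<1/2$, $\delta_w=p$, and join $w$ only to vertices of normalized degree about $p+\varepsilon_0$. So the drift is not a function of $\delta_w$, or of $\Phi_{uv}$, alone. Your bound already fails at $\delta_w=p$, and summing over $w\in\{u,v\}$ does not remove the cross terms. Note that the self-term is restoring as soon as $2\beta p(1-p)<1$; in the paper, the hypothesis $4\beta p(1-p)<1/2$ is what dominates the worst case of these first-order acceptance perturbations.

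Without finer structural input, what you can prove is the paper's worst-case estimate. On $\Gamma_{p,h}$ every acceptance probability is within $2\beta h+C(h^3+n^{-1})$ of $\frac12$, whence
\[
\mathbb E[\Delta r(e)\mid X]\le-\frac{1}{2Np(1-p)}\bigl(q_e-8\beta p(1-p)h-Ch^2-Cn^{-1}\bigr),\qquad q_e=r(e,X)-p.
\]
Since $8\beta p(1-p)\le1-2\kappa_B$, this drift is inward only in a shell of relative width of order $\kappa_B$ at the boundary of the \emph{current} basin. Your strips are too wide for it:
\begin{itemize}
\item On $\Gamma_{p,\varepsilon/2}$, the bound gives inward drift at $q_e=\varepsilon/4$ only if $4\beta p(1-p)<1/4$.
\item On $\Gamma_{p,2\varepsilon}$ (your Step~3), the threshold $16\beta p(1-p)\varepsilon$ can exceed $\varepsilon$, so nothing is guaranteed even at the initial radius.
\end{itemize}
A sharper drift would require an actual analysis of the neighbour-degree terms, which your sketch does not contain.

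Step~2 is repairable. Use the thin strip $q_e\in[(1-\kappa_B)\varepsilon/2,\varepsilon/2]$, with errors $O(\varepsilon^2)$ computed on $\Gamma_{p,\varepsilon/2}$; an $O(\varepsilon_0^2)$ error cannot be absorbed when $\varepsilon\ll\varepsilon_0^2$. Your excursion and union-bound argument then proves the first assertion directly, which is a legitimate alternative to the paper's block iteration.

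The second assertion, however, needs the shrinking-envelope mechanism of Lemma~\ref{le-3}. Started in $\Gamma_{p,h}$, the chain stays in $\Gamma_{p,(1+\delta_0)h}$, where every edge with $q_e\ge(1-2\delta_0)h$ drifts inward. After $An^2$ steps it lies in $\Gamma_{p,(1-\delta_0)h}$ with probability $1-e^{-cn}$, and $O(1)$ such blocks carry $\Gamma_{p,\varepsilon}$ into $\Gamma_{p,\varepsilon/2}$.

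Your Step~3 display is also mis-scaled:
\begin{itemize}
\item With $\theta\lesssim\varepsilon$ (as your Step~2 already requires), $e^{\theta n\Phi_{uv}}$ contracts by a factor $1-c\theta\varepsilon/n$ per step. The factor $1-c\theta/n^2$ you wrote gives only a constant factor after $Cn^2$ steps.
\item The additive contribution from times spent below $p+\varepsilon/4$ is $O(n\varepsilon^{-2}e^{\theta\varepsilon n/4})$, not $O(1)$. It is harmless after Markov's inequality but must be tracked.
\end{itemize}
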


\begin{lemma}[Contractive coupling]\label{le-5}
Under the standing assumption \((p,\beta)\in B\subset
\mathscr R_{\mathrm{Kaw}}\), there exist \(\varepsilon_0=\varepsilon_0(B)>0\),
\(c=c(B)>0\) and
\(n_0=n_0(B)<\infty\) such that, uniformly over \((p,\beta)\in B\), the following holds for every
 \(n\ge n_0\).  Define
\(
        \Lambda=\Gamma_{p,\varepsilon_0}\times\Gamma_{p,\varepsilon_0}
\)
and let \(X',Y'\) be obtained from \(x,y\) by the coupling constructed in Section \ref{sec:path-coupling}  for
the full Kawasaki dynamics. Then, for every \((x,y)\in\Lambda\),
\[
        \mathbb E \rho(X',Y')
        \le
        \left(1-\frac{c}{n^2}\right)\rho(x,y).
\]
\end{lemma}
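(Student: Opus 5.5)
By the path-coupling principle, it suffices to treat adjacent pairs $\rho(x,y)=1$, that is, $y=x^{a\leftrightarrow b}$ with $x_a=1$, $x_b=0$. For general $(x,y)\in\Lambda$ we join $x$ to $y$ by a geodesic in the Kawasaki metric and sum the adjacent-pair bounds.

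One caveat is that the intermediate states must stay in a slightly larger regular set. Each intermediate state differs from the endpoints only on mismatched edges, and each local density $r(e,\cdot)$ changes by at most $1/n$ per exchange. So I will order the geodesic steps to alternate deletions and insertions near each vertex, keeping the intermediate states in $\Gamma_{p,2\varepsilon_0}$. Alternatively, I will define the coupling directly for general pairs by matching mismatched edges and bound each matched pair's contribution separately. Either way, everything reduces to a one-step estimate for $\rho(x,y)=1$ with $x,y$ regular.

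For the adjacent pair, I will use the identity coupling of proposals with one modification. Both chains draw $(s,t)$ uniformly, with the positions $a,b$ identified across the two chains: $s=a$ in $x$ corresponds to $s=b$ in $y$, and $t=b$ in $x$ corresponds to $t=a$ in $y$. Both chains then use the same uniform variable for the heat-bath acceptance. The cases are as follows.
\begin{itemize}
\item Proposals that fix the discrepancy: $s=a$ in $x$ paired with $t=a$ in $y$ (and symmetric choices). There are about $N-k$ choices of $t$ and $k$ choices of $s$ for the matched moves. Since acceptance probabilities lie in $[\delta,1-\delta]$, the distance drops to $0$ with probability at least roughly $c/k+c/(N-k)$, which is of order $n^{-2}$. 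This gain must be computed precisely, because the margin $\frac12-4\beta p(1-p)$ enters by comparison with it.
\item Proposals that may create a new discrepancy: $s$ or $t$ is adjacent to $a$ or $b$ (sharing a vertex), so that the acceptance probabilities in $x$ and $y$ differ. The energy change of the exchange $s\leftrightarrow t$ is $\frac{2\beta}{n}(d(t,\cdot)-d(s,\cdot))$ up to $O(1/n)$ corrections. Flipping $a$ and $b$ alters $d(s)$ or $d(t)$ by $\pm1$, so the acceptance probabilities differ by about $\frac{2\beta}{n}\,\sigma'(\cdot)$. On $\Gamma_{p,\varepsilon_0}$ all $d(e,\cdot)/2n\approx p$, so each heat-bath derivative is $\frac{\e^{h}}{(1+\e^{h})^2}$ evaluated at $h\approx 0$, giving $\approx 1/4$.
\item Counting these proposals: there are about $4n$ edges adjacent to $a$ or $b$, of which a fraction $p$ are occupied (relevant as $s$) and a fraction $1-p$ are unoccupied (relevant as $t$). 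Their proposal probability is $\approx 4n p/k$ or $4n(1-p)/(N-k)$. The expected number of new discrepancies is then about $\frac{4n\cdot p}{k}\cdot\frac{2\beta}{n}\cdot\frac14+\frac{4n(1-p)}{N-k}\cdot\frac{2\beta}{n}\cdot\frac14$, and comparing with the gain yields a total of order $\frac{1}{N}\bigl(c_0-4\beta p(1-p)\cdot\text{const}\bigr)$.
\end{itemize}

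The main obstacle is the exact bookkeeping of these constants. I need to show that the factor comes out to exactly $\frac12-4\beta p(1-p)$ times $\frac{C}{N}$, and that the $O(\varepsilon_0)$ and $O(1/n)$ errors from regularity and the holding probabilities are absorbed by $\kappa_B$.

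To handle this, I would first compute the gain term exactly. Fixing the discrepancy requires the synchronized moves to be accepted in one chain and rejected in the other, or both mapped to the same state. Since $x^{a\leftrightarrow t}$ and $y^{b\leftrightarrow t}$ coincide, the correct pairing gives gain $\approx \frac{1}{k}\cdot\frac12(N-k)\cdot\frac1{N-k}$-type terms. Next, I would use regularity, $r(e)\in[p-\varepsilon_0,p+\varepsilon_0]$, to replace every sigmoid derivative by $\frac14+O(\varepsilon_0)$ and every acceptance probability by $\frac12+O(\varepsilon_0)$. I would also check that proposals involving both $a$ and $b$ (the $O(1)$ choices) contribute only $O(n^{-3})$. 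Choosing $\varepsilon_0$ small relative to $\kappa_B$ then gives $\mathbb E\rho(X',Y')\le 1-c/n^2$ for adjacent pairs, and the geodesic summation finishes the proof.
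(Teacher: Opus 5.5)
Your adjacent-pair computation is correct and reproduces the paper's constants exactly. For $y=x^{a\leftrightarrow b}$ your proposal coupling coincides with the Section~5 coupling at $r=1$. The gain is $\approx\frac12\bigl(\frac1k+\frac1{N-k}\bigr)=\frac{1}{2Np(1-p)}$, the loss is $\approx\frac{4\beta}{N}=\frac{4\beta p(1-p)}{Np(1-p)}$, and so the margin $\frac12-4\beta p(1-p)$ appears as it should.

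The gap is in the reduction to $\rho(x,y)=1$, for two reasons.

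First, the lemma is about the specific coupling of Section~5. For $r=\rho(x,y)\ge2$, that coupling pairs a proposal $(e_1,e_2)\in D\times C_0$ with $(e_3,e_2)$, where $e_3$ is uniform on $\widetilde D$, and similarly draws $e_4$ uniformly from $D$. The coupling obtained by gluing adjacent-pair couplings along a geodesic is different: it pairs each $a_i\in D$ deterministically with the $b_i$ that your geodesic matches it to. Path coupling therefore gives only the existence of \emph{some} contracting coupling. That is enough for Lemma~\ref{le-21}, but it is not the lemma as stated. Your ``alternatively'' route, where the stated lemma would actually be proved, is not carried out. In particular you never analyse the $D\times\widetilde D$ proposals, where simultaneous acceptance changes $|D|$ by $-2+\mathbf 1_{\{e_3=e_2\}}+\mathbf 1_{\{e_4=e_1\}}$.

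Second, you assert but do not prove that the geodesic stays inside $\Gamma_{p,2\varepsilon_0}$. The reason you give, that each exchange moves $r(e,\cdot)$ by $O(1/n)$, says nothing over $\Theta(n^2)$ exchanges. The claim is also not automatic. Suppose $x$ and $y$ overlap like two independent regular graphs, so that only about $p^2n$ edges at a vertex $v$ are common. If you first perform all exchanges that delete an edge at $v$ and insert one away from $v$, then $\deg(v)$ drops from $\approx pn$ to $\approx p^2n$, far outside the regular set. The balancing must hold simultaneously at all $n$ vertices. One way to get it is a uniformly random order of the matched exchanges, combined with Bernstein's inequality for sampling without replacement. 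This shows that after $i$ steps every degree has moved by $\frac{i}{r}(\deg_y(w)-\deg_x(w))+O(\sqrt{n\log n})$, so all intermediate states lie in $\Gamma_{p,\varepsilon_0+o(1)}$.

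The paper avoids both issues by treating general $r$ directly with the Section~5 coupling.
\begin{itemize}
\item The negative contributions from $D\times C_0$, $C_1\times\widetilde D$ and $D\times\widetilde D$ total at least $a_\varepsilon\bigl[r(N(1-p)-r)+(Np-r)r+(2-\frac2r)r^2\bigr]=a_\varepsilon r(N-2)$.
\item The positive contribution from $C_1\times C_0$ is controlled through $|d(h,x)-d(h,y)|\le\sum_{f\in D\cup\widetilde D}\mathbf 1_{\{f\sim h\}}$. This makes it additive over the $2r$ mismatched edges and bounds it by $Nr\bigl(4\beta p(1-p)+2\beta\varepsilon\bigr)$.
\end{itemize}
Only the regularity of $x$ and $y$ themselves is used, so no intermediate states arise. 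Your per-discrepancy count is exactly the $r=1$ instance of this computation. The cleanest repair is to run your count for general $r$ with the Section~5 coupling rather than pass through a geodesic.
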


\begin{lemma}[{\citealp[Lemma~4.1 and Corollary~4.2]
{bresler2024metastable}}]\label{le-21}
Let \((\Sigma,d)\) be a finite metric space, let \(P\) be a Markov kernel on \(\Sigma\), and let \(\Lambda\subseteq\Sigma\times\Sigma\).  Assume that there exists \(\gamma\in(0,1]\) such that,
for every \((x,y)\in\Lambda\), there is a coupling
\(Q_{x,y}\) of \(P(x,\cdot)\) and \(P(y,\cdot)\) satisfying,
for \((X,Y)\sim Q_{x,y}\),
\[
    \mathbb E d(X,Y)\le(1-\gamma)d(x,y).
\]
Define
\(
    \bar d=\max_{x,y\in\Sigma}d(x,y)
\) and
  \(  \underline d=\min_{x\ne y}d(x,y).
\)
Then, for any \((X_0,Y_0)\in\Sigma\times\Sigma\), there is a coupling of two \(P\)-chains such that
\[
    \E d(X_{k+1},Y_{k+1})
    \le (1-\gamma)\E d(X_k,Y_k)
    +\bar d\,\P((X_k,Y_k)\in\Lambda^c).
\]
Consequently,
\[
    \E d(X_t,Y_t)
    \le \bar d\left[(1-\gamma)^t+
    \frac{\sup_{k\le t}\P((X_k,Y_k)\in\Lambda^c)}{\gamma}\right],
\]
and
\[
    d_{\mathrm{TV}}(\mathcal L(X_t),\mathcal L(Y_t))
    \le \frac{\bar d}{\underline d}\left[(1-\gamma)^t+
    \frac{\sup_{k\le t}\P((X_k,Y_k)\in\Lambda^c)}{\gamma}\right].
\]
\end{lemma}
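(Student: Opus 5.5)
The plan is to build the coupling in two stages and then iterate. First, on the restricted set $\Lambda$, the one-step coupling $Q_{x,y}$ gives contraction $(1-\gamma)$. Outside $\Lambda$, any coupling of $P(x,\cdot)$ and $P(y,\cdot)$ will do (say the independent one). Write $(X_{k+1},Y_{k+1})\sim Q_{X_k,Y_k}$ when $(X_k,Y_k)\in\Lambda$, and draw from an arbitrary coupling otherwise. This defines a Markov chain on $\Sigma\times\Sigma$ whose marginals are $P$-chains started at $X_0$ and $Y_0$; measurability is trivial since $\Sigma$ is finite.

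Conditioning on $(X_k,Y_k)$ gives the one-step recursion. On $\{(X_k,Y_k)\in\Lambda\}$,
\[
\E[d(X_{k+1},Y_{k+1})\mid X_k,Y_k]\le(1-\gamma)d(X_k,Y_k),
\]
while on the complement the conditional expectation is at most $\bar d$. Taking expectations and bounding $(1-\gamma)\E[d(X_k,Y_k)\mathbf 1_\Lambda]\le(1-\gamma)\E d(X_k,Y_k)$ (since $d\ge0$) yields
\[
\E d(X_{k+1},Y_{k+1})\le(1-\gamma)\E d(X_k,Y_k)+\bar d\,\P((X_k,Y_k)\in\Lambda^c).
\]

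Next, unroll the recursion. Put $a_k=\E d(X_k,Y_k)$ and $\varepsilon_t=\sup_{k\le t}\P((X_k,Y_k)\in\Lambda^c)$. Induction gives
\[
a_t\le(1-\gamma)^t a_0+\bar d\,\varepsilon_t\sum_{j\ge0}(1-\gamma)^j\le \bar d\bigl[(1-\gamma)^t+\varepsilon_t/\gamma\bigr],
\]
using $a_0\le\bar d$. For the total-variation bound, note that $d(X_t,Y_t)\ge\underline d\,\mathbf 1\{X_t\ne Y_t\}$. Hence $d_{\mathrm{TV}}(\mathcal L(X_t),\mathcal L(Y_t))\le\P(X_t\ne Y_t)\le a_t/\underline d$, which is the last display.

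The only delicate point is that the coupling must be adapted, so that the event $\{(X_k,Y_k)\in\Lambda\}$ is determined before step $k+1$ is drawn. The construction above ensures this automatically. Beyond that, the argument is routine.
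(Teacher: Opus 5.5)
Your proof is correct and follows the standard argument behind the cited result of Bresler et al.\ (the paper only quotes it without proof). That argument uses a Markovian coupling with $Q_{x,y}$ on $\Lambda$ and an arbitrary coupling off $\Lambda$, conditions on $(X_k,Y_k)$ to get the one-step recursion, unrolls it geometrically with $a_0\le\bar d$, and uses $d\ge\underline d\,\mathbf 1\{X_t\ne Y_t\}$ for the total-variation bound; since your recursion involves only unconditional expectations, it also applies verbatim to a random initial pair such as $Y_0\sim\mu_\beta^k$, which is how the lemma is used in the proof of \cref{coupling}.
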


\begin{proof}[Proof of \cref{coupling}]
Let $\varepsilon_0(B)$ be the minimum of the corresponding constants in 
\cref{co-1,le-5}.
Fix $0<\varepsilon\le\varepsilon_0(B)$, and write
\[
    \mu:=\mu_\beta^k,
    \qquad G:=\Gamma_{p,\varepsilon/4},
    \qquad \Gamma:=\Gamma_{p,\varepsilon},
    \qquad \pi:=\mu(\,\cdot\mid\Gamma).
\]
All constants below depend only on $B$ and $\varepsilon$.

\smallskip
\noindent\textbf{Step 1: stationary mass of the regular core.}
Apply \cref{le-1} with $K=B$ and radius $\varepsilon/4$, and fix
\[
    \eta:=\frac12\eta_0(B,\varepsilon/4)>0.
\]
Combining that lemma
with \cref{theo of appro}, also with $K=B$, gives
\[
\begin{aligned}
    \mu(G^c)
    &\le \mu\bigl(G^c\mid\square_\eta^k(p)\bigr)
       +\mu\bigl((\square_\eta^k(p))^c\bigr)\\
    &\le A_{\mathrm{deg}}e^{-b_{\mathrm{deg}}n}
       +A_{\square}e^{-b_{\square}n^2}
\end{aligned}
\]
for all sufficiently large $n$. 
Consequently, with
\[
    A_s:=\max\{1,A_{\mathrm{deg}}+A_{\square}\},
    \qquad b_s:=\min\{b_{\mathrm{deg}},b_{\square}\}>0,
\]
there is $n_s=n_s(B,\varepsilon)$ such that, for every $n\ge n_s$,
\begin{equation}\label{eq:mix21-core-mass}
    \mu(G^c)\le A_s e^{-b_s n},
    \qquad \mu(\Gamma^c)\le A_s e^{-b_s n},
\end{equation}
\begin{equation}\label{eq:mix21-conditioning-tv}
    d_{\mathrm{TV}}(\mu,\pi)
    =\mu(\Gamma^c)\le A_s e^{-b_s n}.
\end{equation}
This proves \cref{eq:regularregion}.

\smallskip
\noindent\textbf{Step 2: comparison of the full and restricted paths.}
From \cref{co-1}, there exist positive constants $a_f, b_f, n_f$ such that, for every $x\in G$, every $n\ge n_f$, and every integer
$T\le e^{a_f n}$, we have
\begin{equation}\label{eq:mix21-confinement}
    \mathbb P_x\bigl(X_t\notin\Gamma_{p,\varepsilon/2}
                 \text{ for some }0\le t\le T\bigr)
    \le e^{-b_f n}.
\end{equation}
For a legal exchange deleting $s$ and adding $t$, the definition of
$r(e,\cdot)$ in \cref{eq:rex} gives the exact identity
\[
    r(e,z^{s\leftrightarrow t})-r(e,z)
    =\frac{\mathbf 1_{\{t\sim e\}}-
            \mathbf 1_{\{s\sim e\}}}{2n}.
\]
Thus a single exchange changes each local density by at most $1/(2n)$.
If $n\ge\varepsilon^{-1}$, every proposal from
$\Gamma_{p,\varepsilon/2}$ stays in $\Gamma$, and hence
\begin{equation}\label{eq:kernels-agree-interior}
    P_\varepsilon(z,\cdot)=P(z,\cdot),
    \qquad z\in\Gamma_{p,\varepsilon/2}.
\end{equation}
Start the full and restricted chains from the same $x\in G$, and use the
same proposals and acceptance while their states agree.
On the complement of the event in \eqref{eq:mix21-confinement}, their
paths agree through time $T$. Therefore
\begin{equation}\label{eq:path-comparison-X-new}
    \mathbb P\bigl((X_0,\ldots,X_T)\ne
                   (\widetilde X_0,\ldots,\widetilde X_T)\bigr)
    \le e^{-b_f n},
    \qquad T\le e^{a_f n}.
\end{equation}
In particular,
$d_{\mathrm{TV}}(xP^T,xP_\varepsilon^T)\le e^{-b_f n}$.
This proves \textup{(i)}.

\smallskip
\noindent\textbf{Step 3: contraction against a stationary full chain.}
From \cref{le-5}, there exist
$\lambda_B\in(0,1]$ and $n_c=n_c(B)$ such that for $n\geq n_c$, there is a one-step
coupling $X', Y'$ of the full Kawasaki dynamics starting from $z,w$ satisfying
\[
    \mathbb E\rho(X',Y')
    \le\left(1-\frac{\lambda_B}{n^2}\right)\rho(z,w),
    \qquad (z,w)\in\Lambda:=\Gamma\times\Gamma.
\]
Couple the full chain with $X_0=x\in G$ to a full chain with
$Y_0\sim\mu$ using \cref{le-21}. 
%Its application to this random initial state follows by conditioning on $Y_0$ and using the same statewise coupling kernels. 
Stationarity gives $Y_t\sim\mu$ for every $t$.
Set
\[
    b_0:=\min\{b_s,b_f\}>0.
\]
For every integer $T\le e^{a_f n}$, the marginal estimates
\eqref{eq:mix21-core-mass} and \eqref{eq:mix21-confinement} imply
\[
\begin{aligned}
    \sup_{0\le t\le T}\mathbb P\bigl((X_t,Y_t)\notin\Lambda\bigr)
    &\le \sup_{0\le t\le T}\mathbb P(X_t\notin\Gamma)
          +\mu(\Gamma^c)\\
    &\le e^{-b_f n}+A_s e^{-b_s n}
     \le (1+A_s)e^{-b_0 n}.
\end{aligned}
\]
%Only the one-time marginals of $Y_t$ are used here; no confinement estimate for the stationary chain is required.

The minimum nonzero Kawasaki distance is $1$, and its diameter is
$\min\{k,N-k\}\le N\le n^2$. Applying \cref{le-21} with
$d=\rho$ and $\gamma=\lambda_B/n^2$ therefore yields
\begin{equation}\label{eq:full-chain-tv-new}
\begin{aligned}
    d_{\mathrm{TV}}(xP^T,\mu)
    &\le n^2\left(1-\frac{\lambda_B}{n^2}\right)^T
       +\frac{1+A_s}{\lambda_B}n^4e^{-b_0 n}\\
    &\le n^2e^{-\lambda_B T/n^2}
       +\frac{1+A_s}{\lambda_B}n^4e^{-b_0 n}.
\end{aligned}
\end{equation}
Using \eqref{eq:mix21-conditioning-tv} and
\eqref{eq:path-comparison-X-new}, the triangle inequality also gives
\[
    d_{\mathrm{TV}}(xP_\varepsilon^T,\pi)
    \le d_{\mathrm{TV}}(xP^T,\mu)
       +e^{-b_f n}+A_s e^{-b_s n}.
\]
Consequently, if
\[
    D:=(1+A_s)(1+\lambda_B^{-1}),
\]
then, for every integer $T\le e^{a_f n}$,
\begin{equation}\label{eq:restricted-mixing-bound-new}
    \max\bigl\{d_{\mathrm{TV}}(xP^T,\mu),
               d_{\mathrm{TV}}(xP_\varepsilon^T,\pi)\bigr\}
    \le n^2e^{-\lambda_B T/n^2}+Dn^4e^{-b_0 n}.
\end{equation}
Both $b_0$ and $D$ depend only on $B$ and $\varepsilon$.

\smallskip
\noindent\textbf{Step 4: choice of the theorem constants and mixing time.}
Choose
\[
    \alpha_{B,\varepsilon}:=a_f,
    \qquad c_{B,\varepsilon}:=\frac{b_0}{2},
    \qquad C_{B,\varepsilon}:=
        \max\{1,A_s,2/\lambda_B\}.
\]
Take $n_0=n_0(B,\varepsilon)$ at least
\[
    \max\{2,n_s,n_f,n_c,\lceil\varepsilon^{-1}\rceil\},
\]
and sufficiently large that, for every $n\ge n_0$,
\begin{equation}\label{eq:mix21-threshold-conditions}
\begin{split}
    Dn^4e^{-c_{B,\varepsilon}n}&\le\frac12,\\
    C_{B,\varepsilon}n^2
       \bigl(2\log n+c_{B,\varepsilon}n\bigr)+1
       &\le e^{\alpha_{B,\varepsilon}n}.
\end{split}
\end{equation}
%Such a threshold exists because $c_{B,\varepsilon}$ and $\alpha_{B,\varepsilon}$ are positive. Every quantity entering these requirements depends only on $B$ and $\varepsilon$.

For $e^{-c_{B,\varepsilon}n}\le\delta\le1$, put
\[
    T:=\left\lceil C_{B,\varepsilon}n^2
                      \log(n^2/\delta)\right\rceil.
\]
Since $\log(1/\delta)\le c_{B,\varepsilon}n$, the second inequality in
\eqref{eq:mix21-threshold-conditions} ensures
$T\le e^{\alpha_{B,\varepsilon}n}$, uniformly over the stated range of
$\delta$. Moreover, $\lambda_B C_{B,\varepsilon}\ge2$ gives
\[
    n^2e^{-\lambda_B T/n^2}
    \le n^2\left(\frac{\delta}{n^2}\right)^2
    =\frac{\delta^2}{n^2}\le\frac\delta2.
\]
Since $b_0=2c_{B,\varepsilon}$, the first inequality in
\eqref{eq:mix21-threshold-conditions} gives
\[
    Dn^4e^{-b_0 n}
    \le\frac12 e^{-c_{B,\varepsilon}n}
    \le\frac\delta2.
\]
Substitution into \eqref{eq:restricted-mixing-bound-new} proves both
mixing estimates in \textup{(ii)}.
%Finally, \eqref{eq:mix21-core-mass} gives \eqref{eq:regularregion}, while \eqref{eq:path-comparison-X-new} gives part~\emph{(i)}, by the choices \(C_{B,\varepsilon}\ge A_s\), \(c_{B,\varepsilon}\le b_s\wedge b_f\), and \(C_{B,\varepsilon}\ge1\).  This completes the proof.
\end{proof}

We next convert the metastable mixing estimate into a weak
Poincar\'e inequality by a finite-time spectral truncation.
The argument is a weak analogue of the 
argument used in the proofs of \cite[Lemma~13.7]{levin2009markov} and  \cite[Theorem~2.1]{ganguly2019}.

\begin{proof}[Proof of \cref{lem:defective-poincare}]
All constants in this proof depend
only on \(B\) and \(\varepsilon\).
As in the proof of \cref{coupling}, write
\(
    \Gamma=\Gamma_{p,\varepsilon},
 G=\Gamma_{p,\varepsilon/4},
\mu=\mu_\beta^k,
    \pi=\mu(\,\cdot\mid\Gamma).
\)
From \cref{coupling}, there are constants
\(C_{\mathrm{mix}},c_{\mathrm{mix}},C,c>0\) such that, for every
\(x\in G\) and every \(\delta\ge e^{-c_{\mathrm{mix}}n}\),
\begin{equation}\label{eq:mixing-input}
    d_{\mathrm{TV}}(xP_{\varepsilon}^T,\pi)\le\delta,
    \qquad
    T=\lceil C_{\mathrm{mix}}n^2\log\frac{n^2}{\delta}\rceil,
\end{equation}
and
\begin{equation}\label{eq:good-set-mass-input}
    \pi(G^c)\le Ce^{-cn},
    \qquad
    \mu(\Gamma^c)\le Ce^{-cn}.
\end{equation}
Choose \(a\in(0,c_{\mathrm{mix}})\), put \(\delta=e^{-an}\), and set
\[
    \tau
    :=
    \left\lceil
    C_{\mathrm{mix}}n^2\log(n^2e^{an})
    \right\rceil .
\]
Then \(\tau\le Kn^3\) for a constant \(K<\infty\).  Let
\(h:=f-\pi f\).  For \(x\in G\), since \(\pi h=0\), \eqref{eq:mixing-input}
gives
\[
    \abs{P_{\varepsilon}^\tau h(x)}
    =
    \abs{\int h\,\dd(xP_\varepsilon^\tau-\pi)}
    \le
    2e^{-an}\norm{h}_\infty .
\]
On \(G^c\), we use only \(\abs{P_\varepsilon^\tau h}\le\norm{h}_\infty\).
Since \(\norm{h}_\infty\le2\norm{f}_\infty\), \eqref{eq:good-set-mass-input}
yields, after changing constants,
\begin{equation}\label{eq:long-time-L2-small}
    \norm{P_{\varepsilon}^\tau h}_{L^2(\pi)}^2
    \le
    Ce^{-bn}\norm{f}_\infty^2
\end{equation}
for some \(b>0\).

Since $\Gamma$ is finite and $P_\varepsilon$ is reversible
with respect to $\pi$, there is an orthonormal eigenbasis
$(u_j)_{j=1}^{|\Gamma|}$ of $L^2(\pi)$, with
$P_\varepsilon u_j=\lambda_j u_j$ and $\lambda_j\in[-1,1]$.
Writing
\[
    h=\sum_j a_j u_j,
    \qquad a_j=\langle h,u_j\rangle_\pi,
\]
define the finite positive measure
\[
    \nu_h:=\sum_j a_j^2\delta_{\lambda_j}.
\]
The following identities then follow from orthogonality and
$\mathcal E_\varepsilon(f,f)
=\langle h,(I-P_\varepsilon)h\rangle_\pi$:
\[
    \Var_\pi(f)=\int 1\,\dd\nu_h(\lambda),
    \qquad
    \mathcal E_\varepsilon(f,f)
    =\int(1-\lambda)\,\dd\nu_h(\lambda),
\]
and
\[
    \norm{P_{\varepsilon}^\tau h}_{L^2(\pi)}^2
    =\int\lambda^{2\tau}\,\dd\nu_h(\lambda).
\]
Fix \(\theta>0\) and split the spectrum into
\[
    A_-:=\left\{\lambda\le1-\frac{\theta}{n^2}\right\},
    \qquad
    A_+:=\left\{\lambda>1-\frac{\theta}{n^2}\right\}.
\]
On \(A_-\),
\begin{equation}\label{eq:low-spectrum}
    \nu_h(A_-)
    \le
    \frac{n^2}{\theta}\mathcal E_\varepsilon(f,f).
\end{equation}
On \(A_+\), the eigenvalues are positive for all sufficiently large \(n\), and
\(\tau\le Kn^3\) gives
\[
    \lambda^{2\tau}
    \ge
    \left(1-\frac{\theta}{n^2}\right)^{2\tau}
    \ge
    e^{-4K\theta n}.
\]
Combining this bound with \eqref{eq:long-time-L2-small}, we obtain
\begin{equation}\label{eq:high-spectrum}
    \nu_h(A_+)
    \le
    e^{4K\theta n}\norm{P_\varepsilon^\tau h}_{L^2(\pi)}^2
    \le
    Ce^{-(b-4K\theta)n}\norm{f}_\infty^2.
\end{equation}
Taking \(\theta<b/(8K)\) and adding
\eqref{eq:low-spectrum}--\eqref{eq:high-spectrum} proves
\eqref{eq:no-log-defective-poincare}.
\end{proof}

Next, we prove \cref{cor:full-defective-poincare}, which is via \cref{eq:no-log-defective-poincare} and a comparison between the Kawasaki kernel $P$ and the restricted kernel $P_\varepsilon$ (see \cref{eq:full-cor-variance-decomposition} and \cref{eq:upperDirich}).

\begin{proof}[Proof of \cref{cor:full-defective-poincare}]
Let $\varepsilon_0(B)$ be as in \cref{coupling}.
Set
\[
    \mu:=\mu_\beta^k,
    \qquad
    \varepsilon:=\frac{\varepsilon_0(B)}{2},
    \qquad
    \Gamma:=\Gamma_{p,\varepsilon},
    \qquad
    \pi:=\mu(\,\cdot\,\mid\Gamma).
\]
Since \(\varepsilon\) is now fixed in terms of \(B\), all constants
depending on \(B\) and \(\varepsilon\) below may be regarded as depending
only on \(B\).
Applying
\cref{coupling} with the above value of \(\varepsilon\), we obtain
constants \(C_1,c_1>0\) and \(n_1<\infty\), depending only on \(B\),
such that
\begin{equation}\label{eq:full-cor-small-complement}
    \mu(\Gamma^c)\le C_1e^{-c_1n}.
\end{equation}
In particular, after increasing \(n_1\) if necessary, \(\mu(\Gamma)>0\),
so that \(\pi\) is well defined.

Let
\[
    m_\Gamma:=\pi(f).
\]
Using the fact that the variance is the minimum of the quadratic
distance from a constant, we have
\begin{align*}
    \Var_\mu(f)
    &=\inf_{a\in\mathbbm R}\int_{\Omega_k}(f-a)^2\,d\mu\\
    &\le \int_{\Omega_k}(f-m_\Gamma)^2\,d\mu\\
    &=\int_\Gamma(f-m_\Gamma)^2\,d\mu
      +\int_{\Gamma^c}(f-m_\Gamma)^2\,d\mu\\
    &=\mu(\Gamma)\Var_\pi(f)
      +\int_{\Gamma^c}(f-m_\Gamma)^2\,d\mu.
\end{align*}
Since
\[
    |m_\Gamma|
    =|\pi(f)|
    \le \norm{f}_\infty,
\]
we have \(|f-m_\Gamma|\le 2\norm{f}_\infty\).  Consequently,
\begin{equation}\label{eq:full-cor-variance-decomposition}
    \Var_\mu(f)
    \le
    \mu(\Gamma)\Var_\pi(f)
    +4\mu(\Gamma^c)\norm{f}_\infty^2.
\end{equation}

We now apply \cref{lem:defective-poincare} to the restriction of \(f\)
to \(\Gamma\).  There exist constants \(C_2,c_2>0\) and
\(n_2<\infty\), depending only on \(B\), such that
\[
    \Var_\pi(f)
    \le
    C_2n^2\mathcal E_\varepsilon(f,f)
    +C_2e^{-c_2n}\norm{f|_\Gamma}_\infty^2
    \le
    C_2n^2\mathcal E_\varepsilon(f,f)
    +C_2e^{-c_2n}\norm{f}_\infty^2.
\]
Multiplying by \(\mu(\Gamma)\le 1\) gives
\begin{equation}\label{eq:full-cor-restricted-poincare}
    \mu(\Gamma)\Var_\pi(f)
    \le
    C_2n^2\mu(\Gamma)\mathcal E_\varepsilon(f,f)
    +C_2e^{-c_2n}\norm{f}_\infty^2.
\end{equation}

It remains to compare the two Dirichlet forms.  Recall that
\(P_\varepsilon(x,y)=P(x,y)\) whenever \(x,y\in\Gamma\) and
\(x\ne y\).  The additional holding probability in the hard-wall
kernel does not contribute to the Dirichlet form.  Hence
\besn{\label{eq:upperDirich}
    \mu(\Gamma)\mathcal E_\varepsilon(f,f)
    &=
    \frac12
    \sum_{x,y\in\Gamma}
    \mu(x)P_\varepsilon(x,y)
    \bigl(f(y)-f(x)\bigr)^2\\
    &=
    \frac12
    \sum_{\substack{x,y\in\Gamma\\x\ne y}}
    \mu(x)P(x,y)
    \bigl(f(y)-f(x)\bigr)^2\\
    &\le
    \frac12
    \sum_{x,y\in\Omega_k}
    \mu(x)P(x,y)
    \bigl(f(y)-f(x)\bigr)^2\\
    &=\mathcal E_P(f,f).
}
Combining this estimate with
\eqref{eq:full-cor-small-complement},
\eqref{eq:full-cor-variance-decomposition}, and
\eqref{eq:full-cor-restricted-poincare}, we obtain
\[
    \Var_{\mu_\beta^k}(f)
    \le
    C_2n^2\mathcal E_P(f,f)
    +
    \bigl(C_2e^{-c_2n}+4C_1e^{-c_1n}\bigr)
    \norm{f}_\infty^2.
\]
Finally, by setting \(c_B:=\min\{c_1,c_2\}\), enlarging \(C_B\), and
taking \(n_0:=\max\{n_1,n_2\}\), we conclude that
\[
    \Var_{\mu_\beta^k}(f)
    \le
    C_Bn^2\mathcal E_P(f,f)
    +C_Be^{-c_Bn}\norm{f}_\infty^2,
\]
uniformly over \((p,\beta)\in B\), as claimed.
\end{proof}

%We finally use the weak Poincar\'e inequality to prove the higher-order concentration estimate.  The argument is an induction on the order of the multilinear statistic.  A single Kawasaki exchange lowers the algebraic order of the statistic, and this allows the Poincar\'e inequality to be iterated.

\begin{proof}[Proof of \cref{thm:higher-order-concentration-log}]
Set \(\varepsilon=\varepsilon_0(B)/2\),
\(\Gamma=\Gamma_{p,\varepsilon}\) and
\(\pi=\mu_\beta^k(\,\cdot\mid\Gamma)\), where $\varepsilon_0(B)$ is as in \cref{coupling}.  All constants below are
uniform over \((p,\beta)\in B\) and depend only on the displayed fixed
parameters and \(B\).
We prove \eqref{eq:moment-bound} by induction on \(d\).  The proof is first
carried out for dyadic \(q=2^j, j=1,2,\dots\).  
%The constants are chosen with a factor-two margin in the admissible range of \(q\), and 
The passage to arbitrary \(q\) is made at
the end of Step 3.  The induction is started from the convention \(f_{0,a}\equiv a\) and
\(\norm{a}_2=|a|\).

For a bounded \(H:\Gamma\to\mathbbm{R}\), define
\[
    \abs{\nabla_\varepsilon H}^2(x)
    :=
    \sum_{y\in\Gamma}P_\varepsilon(x,y)\{H(y)-H(x)\}^2.
\]
Then
\(
    \mathcal E_\varepsilon(H,H)
    =
    \frac12\int\abs{\nabla_\varepsilon H}^2\,\dd\pi.
\)

\medskip
\noindent
\textbf{Step 1: a \(q\)-dependent weak \(L^q\) Poincar\'e estimate.}
In this step, we prove that for every \(M>0\), there are constants \(C_M,c_M>0\) such that, for every
dyadic \(q\) satisfying
\(
    2\le q\le 2c_M n/\log n,
\)
and every bounded \(H:\Gamma\to\mathbbm{R}\),
\begin{equation}\label{eq:defective-Lq}
    \norm{H-\pi H}_{L^q(\pi)}
    \le
    C_Mqn\norm{\nabla_\varepsilon H}_{L^q(\pi)}
    +
    C_Mn^{-M}\norm{H}_\infty.
\end{equation}
To prove this, take \(q>2\) dyadic, put
\(
    G_H=H-\pi H,
    r=\frac q2,
    U=\abs{G_H}^{r}.
\)
Applying \eqref{eq:no-log-defective-poincare} to \(U\) and taking square roots
gives, after changing constants, 
\begin{equation}\label{eq:U-poincare}
    \norm{U-\pi U}_{L^2(\pi)}
    \le
    Cn\norm{\nabla_\varepsilon U}_{L^2(\pi)}
    +
    Ce^{-cn}\norm{U}_\infty.
\end{equation}
The elementary inequality
\(
    \bigl|\abs{u}^{r}-\abs{v}^{r}\bigr|
    \le
    r\bigl(\abs{u}^{r-1}+\abs{v}^{r-1}\bigr)\abs{u-v}
\)
and H\"older's inequality imply
\begin{equation}\label{eq:chain-rule}
    \norm{\nabla_\varepsilon U}_{L^2(\pi)}
    \le
    Cq\norm{G_H}_{L^q(\pi)}^{r-1}
    \norm{\nabla_\varepsilon H}_{L^q(\pi)}.
\end{equation}
Moreover,
\[
    \norm{G_H}_{L^q(\pi)}^r
    =\norm{U}_{L^2(\pi)}
    \le
    \norm{U-\pi U}_{L^2(\pi)}
    +\norm{G_H}_{L^{q/2}(\pi)}^r.
\]
Since \(\norm{G_H}_\infty\le2\norm{H}_\infty\), equations
\eqref{eq:U-poincare}--\eqref{eq:chain-rule} yield
\ben{\label{eq:Xr}
    X^r
    \le
    AX^{r-1}+Y^r+Z^r,
}
where
\[
    X=\norm{G_H}_{L^q(\pi)},
    \qquad
    A=Cqn\norm{\nabla_\varepsilon H}_{L^q(\pi)},
    \qquad
    Y=\norm{G_H}_{L^{q/2}(\pi)},
    \qquad
    Z=Ce^{-cn/q}\norm{H}_\infty.
\]
If \(X\le2A\), the following estimate is immediate.  Otherwise, $AX^{r-1}<X^r/2$, and \cref{eq:Xr} implies
\[
    X
    \le
    Cqn\norm{\nabla_\varepsilon H}_{L^q(\pi)}
    +
    \left(1+\frac{C}{q}\right)Y
    +
    Ce^{-cn/q}\norm{H}_\infty.
\]
Iterating over \(q,q/2,q/4,\ldots,2\), and using monotonicity of the
\(L^s\)-norms, gives
\begin{equation}\label{eq:Lq-before-cutoff}
    \norm{H-\pi H}_{L^q(\pi)}
    \le
    Cqn\norm{\nabla_\varepsilon H}_{L^q(\pi)}
    +
    C(1+\log q)e^{-cn/q}\norm{H}_\infty.
\end{equation}
The case \(q=2\) follows directly from
\eqref{eq:no-log-defective-poincare}.  If
\(q\le2c_Mn/\log n\) and \(c_M\) is sufficiently small, then
\[
    (1+\log q)e^{-cn/q}\le n^{-M},
\]
which proves \eqref{eq:defective-Lq}.

\medskip
\noindent
\textbf{Step 2: exact order lowering under a Kawasaki exchange.}
In this step, we estimate $\|\nabla_\varepsilon f_{d,A}\|_{L^q(\pi)}$.
Let \(F:=f_{d,A}\).  For an occupied edge \(s\) and an unoccupied edge \(t\),
recall that \(x^{s\leftrightarrow t}\) is obtained from \(x\) by deleting \(s\) and
adding \(t\). Write
\(
    \nabla_{s,t}F(x)=F(x^{s\leftrightarrow t})-F(x).
\)
For a multi-affine function and a state with \(x_s=1\), \(x_t=0\),
\[
    F(x^{s\leftrightarrow t})-F(x)
    =
    -D_sF(x)+D_tF(x)-D_sD_tF(x).
\]
For \(r\in[d]\) and \(s\in\mathcal I\), let \(A_{r=s}\)
be the \((d-1)\)-tensor obtained by fixing the \(r\)-th
coordinate of \(A\) at \(s\). For distinct \(r,\ell\in[d]\) and \(s,t\in\mathcal I\),
the tensor \(A_{r=s,\ell=t}\) is obtained by simultaneously
fixing the \(r\)-th and \(\ell\)-th coordinates of the
original tensor at \(s\) and \(t\), respectively. The
remaining coordinates retain their original order.

Using \eqref{eq:appell-properties}, we obtain the exact identity
\begin{equation}\label{eq:exact-order-lowering}
\begin{aligned}
    \nabla_{s,t}F
    ={}&
    -\sum_{r=1}^d f_{d-1,A_{r=s}}
    +\sum_{r=1}^d f_{d-1,A_{r=t}}  
    -\sum_{\substack{1\le r,\ell\le d\\ r\ne\ell}}
    f_{d-2,A_{r=s,\ell=t}},
\end{aligned}
\end{equation}
where the last sum is absent when \(d=1\).  The sections satisfy
\begin{equation}\label{eq:section-identities}
    \sum_{r=1}^d\sum_{s\in\mathcal I}
    \norm{A_{r=s}}_2^2
    =
    d\norm{A}_2^2,
    \qquad
    \sum_{\substack{1\le r,\ell\le d\\r\ne\ell}}
    \sum_{s,t\in\mathcal I}
    \norm{A_{r=s,\ell=t}}_2^2
    =
    d(d-1)\norm{A}_2^2.
\end{equation}
Every legal restricted Kawasaki transition satisfies
\begin{equation}\label{eq:transition-upper}
    P_\varepsilon(x,x^{s\leftrightarrow t})
    \le
    \frac{C}{k(N-k)},
    \qquad
    k\asymp N,
    \qquad
    N-k\asymp N.
\end{equation}
%Indeed, this follows directly from the hard-wall definition, since the non-diagonal transition probabilities of \(P_\varepsilon\) agree with those of the full Kawasaki kernel; the two comparisons are uniform by \(\kappa_B\le p\le1-\kappa_B\).  
Consequently,
\begin{equation}\label{eq:gradient-square}
    \abs{\nabla_\varepsilon F}^2(x)
    \le
    \frac{C}{k(N-k)}
    \sum_{s:x_s=1}\sum_{t:x_t=0}
    \abs{\nabla_{s,t}F(x)}^2.
\end{equation}

Assume inductively that \eqref{eq:moment-bound} has been proved for every
order \(m<d\).  Since \(\mu(\Gamma)\ge1-Ce^{-cn}\),
\begin{equation}\label{eq:pi-mu-comparison}
    \norm{K}_{L^q(\pi)}
    \le
    C\norm{K}_{L^q(\mu)}
\end{equation}
for every function \(K\).  Apply the induction hypothesis with a defect exponent
\(M_0\ge M+d+3\), decreasing the final constant \(c_{d,M}\) if necessary so
that all lower-order induction hypotheses are available in the present range of
\(q\).  Together with \eqref{eq:supnorm-basic}, for every \(m<d\) and every coefficient
tensor \(B\),
\begin{equation}\label{eq:clean-induction}
    \norm{f_{m,B}}_{L^q(\pi)}
    \le
    (C_{d,M}q)^m\norm{B}_2
\end{equation}
for all sufficiently large \(n\).  Indeed, the defect term is bounded by
\[
    Cn^{-M_0}\norm{f_{m,B}}_\infty
    \le
    C_m n^{-M_0+m}\norm{B}_2,
\]
and is absorbed into the first term.

Using \eqref{eq:exact-order-lowering}, Minkowski's inequality in
\(L^q(\pi;\ell^2)\), and then
\eqref{eq:section-identities}--\eqref{eq:clean-induction}, we obtain
\begin{align}
    \norm{\nabla_\varepsilon F}_{L^q(\pi)}
    &\le
    \frac{C_d}{\sqrt{k(N-k)}}
    \Bigg[
    \sum_{r=1}^d
    \left(
    N\sum_s\norm{f_{d-1,A_{r=s}}}_{L^q(\pi)}^2
    \right)^{1/2}                                      \notag\\
    &\qquad
    +\sum_{r=1}^d
    \left(
    N\sum_t\norm{f_{d-1,A_{r=t}}}_{L^q(\pi)}^2
    \right)^{1/2}                                      \notag\\
    &\qquad
    +\sum_{r\ne\ell}
    \left(
    \sum_{s,t}
    \norm{f_{d-2,A_{r=s,\ell=t}}}_{L^q(\pi)}^2
    \right)^{1/2}
    \Bigg]                                               \notag\\
    &\le
    \frac{C_{d,M} q^{d-1}}{n}\norm{A}_2.
    \label{eq:gradient-bound}
\end{align}
Here the factor \(1/n\) comes from
\(
 \sqrt N/(\sqrt{k(N-k)})\asymp 1/n.
\)
The mixed second-difference terms are smaller, of order
\(N^{-1}(Cq)^{d-2}\norm{A}_2\).

\medskip
\noindent
\textbf{Step 3: completion of the induction and transfer to the full measure.}
Applying \eqref{eq:defective-Lq} to \(H=F\) and using
\eqref{eq:gradient-bound} gives
\begin{equation}\label{eq:restricted-moment}
    \norm{F-\pi F}_{L^q(\pi)}
    \le
    C_{d,M} q^d\norm{A}_2
    +
    C_{d,M}n^{-M}\norm{F}_\infty.
\end{equation}
By the centering property following \eqref{hoff}, \(\mu F=0\).  Hence
\[
    \abs{\pi F}
    =
    \frac{\abs{\mu(F\mathbbm{1}_{\Gamma^c})}}{\mu(\Gamma)}
    \le
    Ce^{-cn}\norm{F}_\infty.
\]
Moreover,
\[
    \norm{F}_{L^q(\mu)}
    \le
    C\norm{F}_{L^q(\pi)}
    +
    Ce^{-cn/q}\norm{F}_\infty.
\]
For \(q\le2c_{d,M}n/\log n\), decreasing \(c_{d,M}\) if necessary makes the exponential terms at most
\(
    C_{d,M}n^{-M}\norm{F}_\infty.
\)
This proves \eqref{eq:moment-bound} for dyadic \(q\).

For an arbitrary \(q\) in the stated range, choose a dyadic
\(q_0\in[q,2q)\).  The dyadic argument was proved up to twice the final range,
so it applies to \(q_0\).  Monotonicity of \(L^q\)-norms and \(q_0<2q\) then
prove \eqref{eq:moment-bound} for all admissible \(q\).  
%If \(1\le q<2\), the ``in particular'' estimate follows from the case \(q=2\) and monotonicity of \(L^q\)-norms.

\medskip
\noindent
\textbf{Step 4: optimization of the moment parameter.}
If \(\norm{A}_2=0\), then \(A=0\) and \(F=0\), so the tail estimate is trivial.
Assume from now on that \(\norm{A}_2>0\).  Let
\(
    K=C_{d,M}^d\norm{A}_2,
    R=C_{d,M}n^{-M}\norm{F}_\infty,
    q_{\max}=c_{d,M}\frac{n}{\log n}.
\)
By \eqref{eq:moment-bound}, for every \(2\le q\le q_{\max}\),
\(
    \mu\{\abs{F}\ge eKq^d+eR\}
    \le e^{-q}.
\)
If \(t\ge eK2^d\), choose
\(
    q
    :=
    \min\left\{
    \left(t/(eK)\right)^{1/d},
    q_{\max}
    \right\}.
\)
Then \(q\ge2\) and \(eKq^d\le t\), so
\[
    \mu\{\abs{F}\ge t+eR\}
    \le
    \exp\left\{
    -c_{d,M}
    \min\left[
    \left(\frac{t}{\norm{A}_2}\right)^{1/d},
    \frac{n}{\log n}
    \right]
    \right\}.
\]
For \(t<eK2^d\), the desired estimate is trivial after decreasing
\(c_{d,M}\), since the right-hand side can be made at least one.  Enlarging
\(C_{d,M}\) proves
\eqref{eq:tail-bound}.
\end{proof}

\iffalse
\begin{remark}
The proof uses only the weak Poincar\'e inequality
\eqref{eq:no-log-defective-poincare}.  Its $L^q$ consequence costs a factor
$q$ at each order-lowering step, which gives
\(
    \norm{f_{d,A}}_q\lesssim q^d\norm{A}_2
\)
and hence the exponent $1/d$ in \eqref{eq:tail-bound}.  An estimate of the
form
\(
    \norm{H-\pi H}_q
    \le
    C\sqrt q\,n\norm{\nabla_\varepsilon H}_q
    +\text{defect}
\)
would instead give $\norm{f_{d,A}}_q\lesssim q^{d/2}\norm{A}_2$ and the
stretched-exponential exponent $2/d$.  Such an estimate requires an
$O(n^2)$ log-Sobolev or comparable sub-Gaussian input and does not follow from
Theorem~\ref{lem:defective-poincare} alone. Such an estimate requires an \(O(n^2)\) log-Sobolev or comparable
sub-Gaussian input.  In the spectral range \(\beta<1\), such an input
follows from \cite[Theorem~1.3]{Bauerschmidt2025}.  It does not
follow from Theorem~\ref{lem:defective-poincare} alone, and the present
weak-Poincar\'e argument is needed for the portion of
\(\mathscr R_{\mathrm{Kaw}}\) beyond that spectral range.
\end{remark}
\fi

\section{Confinement estimates and proof of \cref{co-1}}
\label{sec:confinement}

In this section we prove the confinement estimate in \cref{co-1}, which follows the idea of  \cite[Lemma~14]{bhamidi2011mixing}.  The key point is that, near the boundary of the local
regularity region $\Gamma_{p,\varepsilon}$, the local edge-density has a drift pointing back toward
\(p\).  We first prove the following lemma. The desired \cref{co-1} then follows by iteration.

%A martingale estimate gives a one-block confinement estimate, and iteration yields confinement for exponentially long times.

\begin{lemma}\label{le-3}
Under the standing assumption \((p,\beta)\in B\subset
\mathscr R_{\mathrm{Kaw}}\)\footnote{With additional technicality by considering $\Gamma_{p,\eps}\cap \square_\eta^k(p)$, we might be able to enlarge the region $\mathscr R_{\mathrm{Kaw}}$ to $\mathscr R_{\mathrm{RS}}$ in \cref{le-3}, hence in \cref{co-1}. However, since such an improvement will not improve our main result, we do not pursue it here.}, there exist constants
\(\varepsilon_0=\varepsilon_0(B)>0\), \(\delta_0=\delta_0(B)\in(0,1/10)\),
and \(A=A(B)<\infty\), such that the following holds uniformly over
\((p,\beta)\in B\).  For every
\(0<\varepsilon\le\varepsilon_0\), there exists
\(c_{B,\varepsilon}>0\) and \(n_0=n_0(B,\varepsilon)<\infty\), depending only
on \(B\) and \(\varepsilon\), such that, for every  \(n\ge n_0\), with
\(
        \delta:=\delta_0\varepsilon ,
\)
if \((X_t)_{t\ge0}\) is the full Kawasaki dynamics with stationary measure
\(\mu_\beta^k\) and \(X_0\in \Gamma_{p,\varepsilon}\), then, with
\(T=\lfloor A n^2\rfloor\),
\[
        \mathbb P\left(
        X_t\in\Gamma_{p,\varepsilon+\delta}\ \text{for all }0\le t\le T,
        \ X_T\in\Gamma_{p,\varepsilon-\delta}
        \right)
        \ge 1-e^{-c_{B,\varepsilon} n}.
\]
%Equivalently, with probability at least \(1-e^{-c_{B,\varepsilon} n}\),
%\[
%        r_{\max}(X_t)\le p+\varepsilon+\delta,
%        \qquad
%        r_{\min}(X_t)\ge p-\varepsilon-\delta,
%        \qquad 0\le t\le T,
%\]
%and
%\[
%        r_{\max}(X_T)\le p+\varepsilon-\delta,
%        \qquad
%        r_{\min}(X_T)\ge p-\varepsilon+\delta .
%\]
\end{lemma}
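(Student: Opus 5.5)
We will follow the drift-plus-martingale scheme of \cite[Lemma~14]{bhamidi2011mixing}, applied to each local density $r(e,X_t)$ separately and then combined by a union bound over the $N\le n^2$ edge-positions.

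\textbf{Drift of a local density.} For a fixed edge-position $e=\{u,v\}$, a Kawasaki step changes $d(e,X)$ by $+1$ when the added edge $t\sim e$ and by $-1$ when the deleted edge $s\sim e$, so $|r(e,X_{t+1})-r(e,X_t)|\le 1/(2n)$. We compute $\mathbb E[d(e,X_{t+1})-d(e,X_t)\mid X_t=x]$ from \cref{eq:KawaTran}. Two quantities enter. The first is the proposal bias: among the $k$ occupied edges, exactly $d(e,x)$ are adjacent to $e$, and among the $N-k$ unoccupied edges, $2(n-2)-d(e,x)$ are. The second is the heat-bath acceptance. For $s=\{a,b\}$ and $t=\{c,d\}$, the acceptance is $\sigma(\Delta\mathcal H)$ with $\sigma(z)=e^z/(1+e^z)$ and
\[
\Delta\mathcal H=\frac{2\beta}{n}\bigl(\deg c+\deg d-\deg a-\deg b\bigr)+O(\beta/n),
\]
that is, $\Delta\mathcal H=4\beta\bigl(r(t,x)-r(s,x)\bigr)+O(1/n)$. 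Inside $\Gamma_{p,\varepsilon+\delta}$ every $r(\cdot,x)$ lies within $2\varepsilon$ of $p$, so $\Delta\mathcal H=O(\beta\varepsilon)$ and $\sigma(\Delta\mathcal H)=\tfrac12+\beta\bigl(r(t)-r(s)\bigr)+O(\varepsilon^2)$.

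Summing over proposals, the drift of $r(e,x)$ is
\[
\frac{1}{2n\,k(N-k)}\Bigl[\tfrac12\bigl((2n-4-d(e))k-d(e)(N-k)\bigr)+(\text{correction terms})\Bigr].
\]
The leading part is $\approx \tfrac{2n}{2}\bigl(pN-r(e)N\bigr)\cdot\tfrac{1}{n\,k(N-k)}$, which is of order $-(r(e)-p)/n^2$ times a constant of order $1/(p(1-p))$. The correction terms come from the $\beta$-linear acceptance. They involve averages of $r(t)$ over unoccupied $t\sim e$ and of $r(s)$ over occupied $s\sim e$, weighted against the same sums over all $s,t$. Bounding each such $r(\cdot)-p$ by $\pm(\varepsilon+\delta)$ in the worst direction, the corrections contribute at most $4\beta p(1-p)\cdot(\text{const})\cdot\varepsilon/n^2$, with the same normalization as the restoring term. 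The assumption $\tfrac12-4\beta p(1-p)\ge\kappa_B$ is exactly what should guarantee the net drift
\[
\mathbb E[r(e,X_{t+1})-r(e,X_t)\mid X_t=x]\le -\frac{c_0\varepsilon}{n^2}\quad\text{whenever } r(e,x)\ge p+\varepsilon-2\delta,\ x\in\Gamma_{p,\varepsilon+\delta},
\]
and symmetrically for the lower side. Here $\delta_0$ and $\varepsilon_0$ are chosen small relative to $\kappa_B$ so that the $O(\varepsilon^2)$ and $\delta$ errors are absorbed.

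\textbf{Main obstacle.} This drift computation is where I expect the main difficulty. The adjacency structure couples $r(e)$ to the degrees of $u$ and $v$, and the worst-case configuration of neighbouring densities must be handled carefully to land exactly on the constant $4\beta p(1-p)$ versus $1/2$. I would first attempt the crude bound above, in which all other local densities sit at the extreme $p\pm(\varepsilon+\delta)$, and check whether the factor-of-$2$ margin comes out.

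\textbf{Martingale step for one edge.} Fix $e$ and the stopping time $\tau$ at which the chain first leaves $\Gamma_{p,\varepsilon+\delta}$. Consider the process $r(e,X_{t\wedge\tau})$, which has increments bounded by $1/(2n)$. There are two cases.
\begin{itemize}
\item If $r(e,X_0)\le p+\varepsilon-2\delta$, reaching $p+\varepsilon+\delta$ or staying above $p+\varepsilon-\delta$ at time $T$ requires an upward excursion of size $\gtrsim\delta$ against nonpositive drift in the region above $p+\varepsilon-2\delta$.
\item If $r(e,X_0)$ starts in $[p+\varepsilon-2\delta,\,p+\varepsilon]$, the drift $-c_0\varepsilon/n^2$ over $T=An^2$ steps pushes it down by $c_0A\varepsilon\ge 3\delta$ for $A$ large, unless fluctuations overcome it.
\end{itemize}
In both cases, the Azuma--Hoeffding bound for the martingale part over $T=An^2$ steps gives deviation probability $\exp\{-c\,\delta^2/(T/n^2)\}$, which is too weak as stated. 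We therefore sharpen it using the fact that the conditional variance per step is $O\bigl(1/(n^2\cdot n)\bigr)$: $r(e)$ moves only when $s$ or $t$ is adjacent to $e$, which happens with probability $O(1/n)$. Freedman's inequality then gives total variance $O(A/n)$ and a tail of $\exp(-c\delta^2 n/A)=e^{-c_{B,\varepsilon}n}$. The lower side is symmetric.

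\textbf{Conclusion.} A union bound over the $N\le n^2$ edge-positions, and over the two sides, absorbs the polynomial factor into $e^{-c_{B,\varepsilon}n}$. On the good event no density exits $[p-\varepsilon-\delta,\,p+\varepsilon+\delta]$ before $T$, so $\tau>T$, and every density ends in $[p-\varepsilon+\delta,\,p+\varepsilon-\delta]$, which proves the lemma.
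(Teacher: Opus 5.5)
Your plan is the paper's proof. The paper carries out exactly your crude worst-case bound, $\tfrac12-2\beta h\le\alpha\le\tfrac12+2\beta h$ up to $O(h^3+n^{-1})$ with $h=\varepsilon+\delta$. This gives $S_-(e)-S_+(e)\ge Nn[q_e-8\beta p(1-p)h-Ch^2-Cn^{-1}]$, so the margin $\tfrac12-4\beta p(1-p)$ you were unsure about does come out. It then controls excursions in the strip $[p+\varepsilon-2\delta,\,p+\varepsilon+\delta]$ with a stopped exponential supermartingale built on the same $O(n^{-3})$ conditional-variance bound, playing the role of your Freedman step. It uses union bounds over edges and over excursion windows $(t_1,t_2)$; you can avoid the window union by applying Freedman's maximal bound to the globally defined martingale part, since the variance bound holds in every state.
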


\begin{proof}
Throughout this proof, we use \(C,c,\gamma\) to denote constants that may only depend on \(B\),
while \(c_{B,\varepsilon}\) may also depend on \(\varepsilon\). Let $\mathcal F_t:=\sigma(X_0,\ldots,X_t)$ be the natural filtration of the chain.

The proof is organized into five steps.  In Step~1, we compute the
one-step drift of \(r(e,X_t)\) for a fixed edge-position \(e\), reducing the
dynamics to a comparison between accepted exchanges that remove and add an
edge adjacent to \(e\).  Step~2 uses the margin
\(4\beta p(1-p)<1/2\) to turn this expansion into a uniform inward drift whenever
\(r(e,X_t)\) lies near either boundary of the regularity band.  In Step~3, we
combine this drift with the bounded jump size and conditional variance to
construct stopped exponential supermartingales, which show that a local
density is exponentially unlikely to make a bad outward excursion while it
remains in a boundary strip.  Step~4 applies these excursion estimates,
together with first-crossing decompositions and union bounds over all
edge-positions, to prove that the chain remains inside the enlarged band
\(\Gamma_{p,\varepsilon+\delta}\) throughout a block of length
\(T=\lfloor An^2\rfloor\).  Finally, Step~5 chooses \(A\) sufficiently large
so that the cumulative inward drift over one block dominates the width
\(\delta\), forcing the terminal configuration into the smaller band
\(\Gamma_{p,\varepsilon-\delta}\) with exponentially high probability.

\medskip
\noindent
\textbf{Step 1: One-step drift for the local edge density.}
We first derive a one-step drift estimate for the local density around a fixed
edge-position.  For two edge-positions \(f,e\), recall that we write \(f\sim e\) if
\(f\neq e\) and \(f\cap e\neq\varnothing\).  Let \(E_1=E_1(X)\) and
\(E_0=E_0(X)\) denote the sets of present and absent edges, respectively.  For
\(e'\in E_1\) and \(e''\in E_0\), let \(X^{e'\leftrightarrow e''}\) be the
graph obtained from \(X\) by deleting \(e'\) and adding \(e''\).  Set
\[
    \alpha_X(e',e'')
    =
    \frac{\mu_\beta^k(X^{e'\leftrightarrow e''})}
    {\mu_\beta^k(X)+\mu_\beta^k(X^{e'\leftrightarrow e''})}.
\]
This is the acceptance probability of the full Kawasaki chain.

Since an accepted exchange changes \(d(e,X)\) by
\(
    d(e,X^{e'\leftrightarrow e''})-d(e,X)
    =
    -[{\bf 1}_{\{e'\sim e\}}-{\bf 1}_{\{e''\sim e\}}]
\)
and $r(e,X)=d(e,X)/2n$,
we have
\begin{align}
    &\mathbb E\left[
        r(e,X_1)-r(e,X_0)\mid X_0=X
    \right]                                                   \notag\\
    &\qquad =
    -\frac{1}{2n|E_1||E_0|}
    \left[
        \sum_{\substack{e'\in E_1,\ e'\sim e\\ e''\in E_0}}
        \alpha_X(e',e'')
        -
        \sum_{\substack{e'\in E_1\\ e''\in E_0,\ e''\sim e}}
        \alpha_X(e',e'')
    \right],
    \label{eq:local-drift-revised}
\end{align}
where
\(
    |E_1|=Np,
    |E_0|=N(1-p).
\)

We next expand the acceptance probability.  Recall from \cref{eq:Hamilton} that
\(
    \mu_\beta^k(X)\propto e^{\mathcal H_\beta(X)},
    \mathcal H_\beta(X)=\frac{2\beta}{n}S_2(X).
\)
A single exchange satisfies, uniformly in \(e'\in E_1\) and \(e''\in E_0\),
\[
    S_2(X^{e'\leftrightarrow e''})-S_2(X)
    =
    d(e'',X)-d(e',X)+O(1).
\]
Therefore
\[
    \alpha_X(e',e'')
    =
    \frac{1}{1+\exp\left\{
        \frac{2\beta}{n}
        \bigl(d(e',X)-d(e'',X)\bigr)
        +O(n^{-1})
    \right\}} .
\]
If \(X\in\Gamma_{p,h}\), then
\(
    d(f,X)=2np+O(hn)
\)
uniformly in the edge-position \(f\).  Taylor expansion gives, uniformly in
\(e'\in E_1\) and \(e''\in E_0\),
\[
    \alpha_X(e',e'')
    =
    \frac12
    -
    \frac{\beta}{2n}
    \bigl(d(e',X)-d(e'',X)\bigr)
    +
    O(h^3+n^{-1}).
\]
In particular,
\begin{equation}\label{eq:alpha-upper-lower}
    \frac12-2\beta h-C(h^3+n^{-1})
    \le
    \alpha_X(e',e'')
    \le
    \frac12+2\beta h+C(h^3+n^{-1}) .
\end{equation}

Set
\(
    q_e=r(e,X)-p .
\)
By the definition of \(r(e,X)\),
\[
    |\{e'\in E_1:e'\sim e\}|=2n(p+q_e).
\]
Since the total number of edge-positions adjacent to \(e\) is \(2(n-2)\), we
also have
\[
    |\{e''\in E_0:e''\sim e\}|
    =
    2n(1-p-q_e)+O(1).
\]
The \(O(1)\) error in this count will be absorbed into the \(Cn^{-1}\) term
below.

Define
\[
    S_-(e):=
    \sum_{\substack{e'\in E_1,\ e'\sim e\\ e''\in E_0}}
    \alpha_X(e',e''),
    \qquad
    S_+(e):=
    \sum_{\substack{e'\in E_1\\ e''\in E_0,\ e''\sim e}}
    \alpha_X(e',e'').
\]
Using \eqref{eq:alpha-upper-lower}, we obtain
\[
\begin{aligned}
    S_-(e)
    &\ge
    Nn(p+q_e)(1-p)
    -
    4\beta Nn(p+q_e)(1-p)h
    -
    C Nn(h^3+n^{-1}),                                      \\
    S_+(e)
    &\le
    Nnp(1-p-q_e)
    +
    4\beta Nnp(1-p-q_e)h
    +
    C Nn(h^3+n^{-1}).
\end{aligned}
\]
Consequently, using \(|q_e|\le h\),
\begin{equation}\label{eq:Sdiff-lower}
    S_-(e)-S_+(e)
    \ge
    Nn
    \left[
        q_e-8\beta p(1-p)h-Ch^2-Cn^{-1}
    \right].
\end{equation}
The same argument, with the inequalities reversed, gives
\begin{equation}\label{eq:Sdiff-upper}
    S_-(e)-S_+(e)
    \le
    Nn
    \left[
        q_e+8\beta p(1-p)h+Ch^2+Cn^{-1}
    \right].
\end{equation}
\medskip
\noindent
\textbf{Step 2: Boundary drift estimates.}
Let
\(
    \theta_0=1-8\beta p(1-p) .
\)
By the definition of \(\kappa_B\) in \cref{eq:kappaB},
\[
    \theta_0
    =
    2\left(\frac12-4\beta p(1-p)\right)
    \ge 2\kappa_B.
\]
Choose
\(
    0<\delta_0=\delta_0(B)\le
    \min\left\{1/10,\kappa_B/6\right\},
\)
and then choose \(\varepsilon_0=\varepsilon_0(B)>0\) sufficiently small.
Throughout the proof we take
\(
    0<\varepsilon\le\varepsilon_0,
    \delta=\delta_0\varepsilon,
    h=\varepsilon+\delta .
\)

Suppose first that \(X\in\Gamma_{p,h}\) and
\(
    r(e,X)\ge p+\varepsilon-2\delta .
\)
Then \(q_e\ge \varepsilon-2\delta\).  Writing
\(
    a=8\beta p(1-p)\le1-2\kappa_B,
\)
\eqref{eq:Sdiff-lower} gives, for
\(n\ge n_0(B,\varepsilon)\),
\[
\begin{aligned}
    S_-(e)-S_+(e)
    &\ge
    Nn\left[2\kappa_B\varepsilon-3\delta
    -C_Bh^2-C_Bn^{-1}\right] \ge \frac{\kappa_B}{2}Nn\varepsilon .
\end{aligned}
\]
In particular, after decreasing a constant depending only on \(B\),
\(
    S_-(e)-S_+(e)\ge c_BNn\varepsilon .
\)
Substituting this estimate into \eqref{eq:local-drift-revised} yields
\begin{equation}\label{eq:upper-boundary-drift}
    \mathbb E\left[
        r(e,X_1)-r(e,X_0)\mid X_0=X
    \right]
    \le
    -\frac{\gamma\varepsilon}{n^2}.
\end{equation}

Similarly, if \(X\in\Gamma_{p,h}\) and
\(
    r(e,X)\le p-\varepsilon+2\delta ,
\)
then \(q_e\le -\varepsilon+2\delta\).  The same calculation using
\eqref{eq:Sdiff-upper} gives the preceding bound with the sign reversed, and
hence
\begin{equation}\label{eq:lower-boundary-drift}
    \mathbb E\left[
        r(e,X_1)-r(e,X_0)\mid X_0=X
    \right]
    \ge
    \frac{\gamma\varepsilon}{n^2}.
\end{equation}
Thus the drift is uniformly inward near both boundaries.
\medskip

\noindent
\textbf{Step 3: Exponential bound for bad excursions.}
Set
\(
    R_+=p+\varepsilon,
    R_-=p-\varepsilon,
    T=\lfloor A n^2\rfloor ,
\)
where \(A>0\) will be chosen later below \cref{eq:choiceA}.  Define
\[
    A_t(\delta)
    :=
    \left\{
        r_{\max}(X_t)\le R_++\delta,
        \quad
        r_{\min}(X_t)\ge R_--\delta
    \right\}.
\]
For a fixed edge-position \(e\), put
\[
    D_t^+(e,\delta)
    :=
    A_t(\delta)\cap
    \{R_+-2\delta\le r(e,X_t)\le R_++\delta\},
\]
and
\[
    D_t^-(e,\delta)
    :=
    A_t(\delta)\cap
    \{R_--\delta\le r(e,X_t)\le R_-+2\delta\}.
\]
For \(0\le t_1<t_2\le T\), define
\[
    B_{t_1,t_2}^+(e,\delta)
    :=
    \left(\bigcap_{t_1\le t<t_2}D_t^+(e,\delta)\right)
    \cap
    \left\{
        r(e,X_{t_2})-r(e,X_{t_1})>\frac{\delta}{2}
    \right\}.
\]
Similarly, define
\[
    B_{t_1,t_2}^-(e,\delta)
    :=
    \left(
        \bigcap_{t_1\le t<t_2}D_t^-(e,\delta)
    \right)
    \cap
    \left\{
        r(e,X_{t_2})-r(e,X_{t_1})
        <-\frac{\delta}{2}
    \right\}.
\]

We claim that, for every fixed edge-position \(e\),
\begin{equation}\label{eq:bad-excursion-bound}
    \mathbb P\left(
        \bigcup_{0\le t_1<t_2\le T}
        B_{t_1,t_2}^+(e,\delta)
    \right)
    +
    \mathbb P\left(
        \bigcup_{0\le t_1<t_2\le T}
        B_{t_1,t_2}^-(e,\delta)
    \right)
    \le e^{-c_{B,\varepsilon} n}.
\end{equation}

We prove the estimate for the upper boundary; the lower boundary is identical
after replacing \(r(e,X_s)\) by \(-r(e,X_s)\).  On \(D_t^+(e,\delta)\), the
drift of \(r(e,X_t)\) is at most \(-\gamma\varepsilon/n^2\) from \cref{eq:upper-boundary-drift}.  Moreover, one
Kawasaki exchange changes \(r(e,\cdot)\) by at most \(C/n\).  A non-zero
change can occur only if one of the two exchanged edges is adjacent to \(e\),
which has conditional probability \(O(n^{-1})\).  Hence, uniformly in the
current state,
\ben{\label{eq:driftsquare}
    \mathbb E\left[
        \bigl(r(e,X_{t+1})-r(e,X_t)\bigr)^2
        \,\bigm|\, X_t
    \right]
    \le
    \frac{C}{n^3}.
}

The drift estimate is valid only while the trajectory remains in
\(D_s^+(e,\delta)\).  We therefore stop the process when it leaves this set.
Fix \(0\le t_1<t_2\le T\), and write
\(
    \Delta_s=r(e,X_{s+1})-r(e,X_s),
    \mu=\gamma\varepsilon/n^2.
\)
Let
\(
    \tau^+
    =
    \inf\{s\in\{t_1,\ldots,t_2-1\}:D_s^+(e,\delta)^c\}\wedge t_2
\)
and define
\(
    \Delta_s^+ = \Delta_s{\bf 1}_{\{s<\tau^+\}},t_1\le s<t_2.
\)
Then \({\bf 1}_{\{s<\tau^+\}}\) is \(\mathcal F_s\)-measurable, and by
\eqref{eq:upper-boundary-drift} and \cref{eq:driftsquare},
\[
    \mathbb E(\Delta_s^+\mid \mathcal F_s)
    \le
    -\mu{\bf 1}_{\{s<\tau^+\}},
    \qquad
    |\Delta_s^+|\le \frac Cn,
    \qquad
    \mathbb E((\Delta_s^+)^2\mid \mathcal F_s)
    \le \frac C{n^3}{\bf 1}_{\{s<\tau^+\}} .
\]

Let \(\lambda=c_0\varepsilon n\), where \(c_0>0\) is sufficiently small,
depending only on \(B\).  By decreasing \(\varepsilon_0\), we
may assume \(|\lambda\Delta_s^+|\le 1\).  Taylor expansion of the exponential
gives
\[
\begin{aligned}
    \mathbb E\left[
        \exp\left\{
            \lambda\left(
                \Delta_s^+
                +\frac{\mu}{2}{\bf 1}_{\{s<\tau^+\}}
            \right)
        \right\}
        \,\bigm|\, \mathcal F_s
    \right]
    &\le
    \exp\left\{
        \left[
            -\frac{\lambda\mu}{2}
            +C\lambda^2n^{-3}
        \right]
        {\bf 1}_{\{s<\tau^+\}}
    \right\} \le 1 .
\end{aligned}
\]
In the last step we used the choice of \(c_0\).  Therefore
\[
    M_j^+
    :=
    \exp\left\{
        \lambda
        \sum_{s=t_1}^{j-1}
        \left(
            \Delta_s^+
            +\frac{\mu}{2}{\bf 1}_{\{s<\tau^+\}}
        \right)
    \right\},
    \qquad t_1\le j\le t_2,
\]
is a non-negative supermartingale.

On the event \(B_{t_1,t_2}^+(e,\delta)\), we have \(\tau^+=t_2\) and
\(
    \sum_{s=t_1}^{t_2-1}\Delta_s>\delta/2.
\)
Hence
\(
    M_{t_2}^+
    \ge
    \exp\left\{\lambda\delta/2\right\}.
\)
By Markov's inequality,
\[
    \mathbb P\bigl(B_{t_1,t_2}^+(e,\delta)\bigr)
    \le
    \exp\left\{-\frac{\lambda\delta}{2}\right\}
    \le
    \exp\{-c\varepsilon^2 n\},
\]
because \(\delta=\delta_0\varepsilon\).  The lower-boundary estimate follows in
the same way, using \eqref{eq:lower-boundary-drift} and the stopped increments
associated with \(D_s^-(e,\delta)\).

Since \(T=O_A(n^2)\), there are at most \(O_A(n^4)\) choices of
\((t_1,t_2)\).  Thus, after a union bound over all intervals and after
decreasing the constant in the exponent,
\(
    O_A(n^4)e^{-c\varepsilon^2n}
    \le e^{-c_{B,\varepsilon} n}
\)
for all sufficiently large \(n\).  This proves
\eqref{eq:bad-excursion-bound}.
\medskip

\noindent
\textbf{Step 4: Persistence of the regularity band.}
We now show that the trajectory remains in the local regularity band with high
probability.  Let
\[
    \tau=\inf\{0\le t\le T:A_t(\delta)^c\}.
\]
Suppose first that \(\tau\le T\) and
\(
    r_{\max}(X_\tau)>R_++\delta .
\)
Choose an edge-position \(e\) for which the upper boundary is crossed, and let
\[
    t_2:=\inf\{0\le t\le \tau:r(e,X_t)>R_++\delta\}.
\]
Since \(X_0\in\Gamma_{p,\varepsilon}\), we have \(r(e,X_0)\le R_+\), and
therefore \(t_2\ge1\).

If
\(
    r(e,X_t)\ge R_+-2\delta\;
    \text{for all }0\le t<t_2,
\)
set \(t_1=0\).  Otherwise, let
\[
    s_0:=\max\{0\le t<t_2:r(e,X_t)<R_+-2\delta\},
    \qquad
    t_1:=s_0+1 .
\]
Since one step changes \(r(e,\cdot)\) by at most \(C/n\), for all sufficiently
large \(n\) we have \(t_1<t_2\).  By the definitions of \(t_1\) and \(t_2\),
the event \(D_t^+(e,\delta)\) holds for every \(t_1\le t<t_2\).  Moreover, if
\(t_1=0\), then \(r(e,X_{t_1})\le R_+\), and hence
\[
    r(e,X_{t_2})-r(e,X_{t_1})>\delta .
\]
If \(t_1>0\), then by the definition of \(s_0\) and by the one-step bound,
\[
    r(e,X_{t_1})\le R_+-2\delta+\frac Cn .
\]
Since \(r(e,X_{t_2})>R_++\delta\), it follows that
\[
    r(e,X_{t_2})-r(e,X_{t_1})
    >
    3\delta-\frac Cn
    >
    \frac{\delta}{2}
\]
for all sufficiently large \(n\).  Hence \(B_{t_1,t_2}^+(e,\delta)\) occurs.

By \eqref{eq:bad-excursion-bound}, and then by a union bound over all
edge-positions,
\[
    \mathbb P\left(
        \tau\le T,\ r_{\max}(X_\tau)>R_++\delta
    \right)
    \le e^{-c_{B,\varepsilon} n}.
\]
The lower boundary is treated in exactly the same way, using
\(B^-_{t_1,t_2}(e,\delta)\).  Therefore
\begin{equation}\label{eq:regularity-band-high-probability}
    \mathbb P\left(
        A_t(\delta)\ \text{holds for all }0\le t\le T
    \right)
    \ge 1-e^{-c_{B,\varepsilon} n}.
\end{equation}
\medskip
\noindent
\textbf{Step 5: Inward movement after time \(T\).}
It remains to prove that the upper and lower envelopes move inward after
\(T=\lfloor A n^2\rfloor\) steps.  Let
\[
    \mathcal A_T
    =
    \{A_t(\delta)\text{ holds for every }0\le t\le T\}.
\]
By \eqref{eq:regularity-band-high-probability},
\(
    \mathbb P(\mathcal A_T^c)\le e^{-c_{B,\varepsilon} n}.
\)

We first treat the upper envelope.  Fix an edge-position \(e\), and set
\[
    E_e^+=\{r(e,X_T)>R_+-\delta\}.
\]
We split \(E_e^+\) according to whether the trajectory ever enters the lower
part of the upper boundary strip:
\[
\begin{aligned}
E_e^+
\subseteq\;
\mathcal A_T^c
&\cup
\Bigl(
    \mathcal A_T\cap E_e^+
    \cap
    \{r(e,X_t)\ge R_+-2\delta\text{ for all }0\le t<T\}
\Bigr)                                                               \\
&\cup
\Bigl(
    \mathcal A_T\cap E_e^+
    \cap
    \{r(e,X_t)<R_+-2\delta\text{ for some }0\le t<T\}
\Bigr).
\end{aligned}
\]
The first event has probability at most \(e^{-c_{B,\varepsilon} n}\).

On the second event, \(D_t^+(e,\delta)\) holds for all \(0\le t<T\).  We repeat
the stopped supermartingale construction from Step 3 with \(t_1=0\) and
\(t_2=T\).  Namely, set
\[
    \Delta_s=r(e,X_{s+1})-r(e,X_s),
    \qquad
    \mu=\frac{\gamma\varepsilon}{n^2},
\]
and define
\[
    \tau_T^+
    :=
    \inf\{0\le s<T:D_s^+(e,\delta)^c\}\wedge T,
    \qquad
    \Delta_s^{+,T}:=\Delta_s{\bf 1}_{\{s<\tau_T^+\}} .
\]
The same argument as in Step 3 shows that
\[
    M_j^{+,T}
    :=
    \exp\left\{
        \lambda
        \sum_{s=0}^{j-1}
        \left(
            \Delta_s^{+,T}
            +\frac{\mu}{2}{\bf 1}_{\{s<\tau_T^+\}}
        \right)
    \right\},
    \qquad 0\le j\le T,
\]
is a non-negative supermartingale.

On the second event, \(\tau_T^+=T\).  Since
\(X_0\in\Gamma_{p,\varepsilon}\), we have \(r(e,X_0)\le R_+\).  Therefore
\ben{\label{eq:choiceA}
    \sum_{s=0}^{T-1}
    \left(
        \Delta_s+\frac{\mu}{2}
    \right)
    >
    -\delta+\frac{\mu T}{2}.
}
Choose \(A\) sufficiently large so that
\(
    \mu T/2
    =
    \gamma\varepsilon T/2n^2
    \ge 2\delta
\)
for all sufficiently large \(n\).  Since \(\delta=\delta_0\varepsilon\), this
choice of \(A\) depends only on \(B\).  The last display then
implies
\(
    M_T^{+,T}\ge e^{\lambda\delta}.
\)
By the supermartingale property and Markov's inequality,
\[
    \mathbb P(M_T^{+,T}\ge e^{\lambda\delta})
    \le e^{-\lambda\delta}
    \le e^{-c_{B,\varepsilon} n}.
\]
Thus the second event has probability at most \(e^{-c_{B,\varepsilon} n}\).

It remains to consider the third event.  On this event, let
\[
    s_0:=\max\{0\le s<T:r(e,X_s)<R_+-2\delta\},
    \qquad
    t_1:=s_0+1 .
\]
Since one Kawasaki step changes \(r(e,\cdot)\) by at most \(C/n\), for all
sufficiently large \(n\),
\[
    r(e,X_{t_1})\le R_+-2\delta+\frac Cn .
\]
Moreover, by the definition of \(s_0\) and by the event \(\mathcal A_T\),
\(D_s^+(e,\delta)\) holds for every \(t_1\le s<T\).  Since
\(r(e,X_T)>R_+-\delta\), we also have
\[
    r(e,X_T)-r(e,X_{t_1})
    >
    \delta-\frac Cn
    >
    \frac{\delta}{2}
\]
for all sufficiently large \(n\).  Hence \(B_{t_1,T}^+(e,\delta)\) occurs.
By \eqref{eq:bad-excursion-bound}, the third event also has probability at
most \(e^{-c_{B,\varepsilon} n}\).

Combining the three estimates gives
\[
    \mathbb P\{r(e,X_T)>R_+-\delta\}
    \le e^{-c_{B,\varepsilon} n}.
\]
Taking a union bound over all edge-positions and absorbing the polynomial
factor into the exponential, we obtain
\[
    \mathbb P\{r_{\max}(X_T)>R_+-\delta\}
    \le e^{-c_{B,\varepsilon} n}.
\]

The lower envelope is handled in the same way, using
\eqref{eq:lower-boundary-drift} and the bad downward excursion events
\(B^-_{t_1,t_2}(e,\delta)\).  Thus
\[
    \mathbb P\{r_{\min}(X_T)<R_-+\delta\}
    \le e^{-c_{B,\varepsilon} n}.
\]

Together with \eqref{eq:regularity-band-high-probability}, this proves that,
with probability at least \(1-e^{-c_{B,\varepsilon} n}\),
\[
    A_t(\delta)\ \text{holds for all }0\le t\le T,
\]
and
\[
    r_{\max}(X_T)\le R_+-\delta,
    \qquad
    r_{\min}(X_T)\ge R_-+\delta .
\]
This proves the claim.
\end{proof}

Iterating \cref{le-3}, we complete the proof of the confinement estimate.

\begin{proof}[Proof of \cref{co-1}]
We call $\varepsilon$ in $\Gamma_{p,\varepsilon}$ the \emph{radius}.
Apply \cref{le-3} with radius \(\varepsilon/4\), and set
\(
        T_0=\lfloor A n^2\rfloor 
\)
with $A$ as in the statement of \cref{le-3}.
If a block of the full Kawasaki chain for time duration $T_0$ starts from \(\Gamma_{p,\varepsilon/4}\), then, except on an
event of probability at most \(e^{-c_{B,\varepsilon/4}n}\), the chain stays
during the whole block in
\(
        \Gamma_{p,\,\varepsilon/4+\delta_0\varepsilon/4}
        \subset \Gamma_{p,\varepsilon/2},
\)
and the endpoint belongs to
\(
        \Gamma_{p,\,\varepsilon/4-\delta_0\varepsilon/4}
        \subset \Gamma_{p,\varepsilon/4}.
\)
Thus the estimate can be iterated.  Taking a union bound
over at most \(e^{\alpha_{B,\varepsilon} n}/T_0+1\) blocks and choosing
\(\alpha_{B,\varepsilon}<c_{B,\varepsilon/4}/2\), we obtain
\[
        \mathbb P\left(
        X_t\in\Gamma_{p,\varepsilon/2}
        \text{ for all }0\le t\le e^{\alpha_{B,\varepsilon} n}
        \right)
        \ge 1-e^{-c_{B,\varepsilon} n}.
\]

For the second assertion, start from radius \(\varepsilon\) and apply
\cref{le-3}.
After \(m\) successive blocks the radius is at most
\((1-\delta_0)^m\varepsilon\) except for an exponentially small probability.  Choose \(m\) so that this is at most
\(\varepsilon/2\).  A union bound over these finitely many blocks gives
\[
        \mathbb P\left(
        X_{mT_0}\in\Gamma_{p,\varepsilon/2}
        \right)
        \ge 1-e^{-c_{B,\varepsilon} n}.
\]
Taking \(T_n=mT_0\), we have \(T_n\le Cn^2\) for a constant
\(C=C(B,\varepsilon)\), which proves the finite-time estimate.
\end{proof}

\section{Path coupling estimates and proof of \cref{le-5}}
\label{sec:path-coupling}

In this section we prove the contractive coupling estimate in
\cref{le-5}.  The main point is to control the difference of the
acceptance probabilities directly in terms of the disagreement edges
between the two configurations.

Define the following proposal coupling for the full Kawasaki dynamics. Given
\((X_t,Y_t)\), choose \(e_1\in E_1(X_t)\) and \(e_2\in E_0(X_t)\)
independently and uniformly at random, and set
\(
        \widehat X=X_t^{e_1\leftrightarrow e_2}.
\)
If \(e_1\in E_1(X_t)\cap E_1(Y_t)\), we use the same deleted
edge \(e_1\) for \(Y_t\); otherwise we choose \(e_3\) uniformly from
\(E_1(Y_t)\setminus E_1(X_t)\). Similarly, if
\(e_2\in E_0(X_t)\cap E_0(Y_t)\), we use the same added edge \(e_2\) for
\(Y_t\); otherwise we choose \(e_4\) uniformly from
\(E_0(Y_t)\setminus E_0(X_t)\). More explicitly,
\(
        \widehat Y
        =
        Y_t^{\,f_1\leftrightarrow f_2},
\)
where
\[
        f_1=
        \begin{cases}
        e_1, & e_1\in E_1(X_t)\cap E_1(Y_t),\\
        e_3, & e_1\in E_1(X_t)\setminus E_1(Y_t),
        \end{cases}
        \qquad
        f_2=
        \begin{cases}
        e_2, & e_2\in E_0(X_t)\cap E_0(Y_t),\\
        e_4, & e_2\in E_0(X_t)\setminus E_0(Y_t).
        \end{cases}
\]
Here \(e_3\) is chosen uniformly from \(E_1(Y_t)\setminus E_1(X_t)\) whenever
it is needed, and \(e_4\) is chosen uniformly from
\(E_0(Y_t)\setminus E_0(X_t)\) whenever it is needed. If both are needed, they
are chosen independently. Since \(X_t\) and \(Y_t\) have the same number of
edges, the relevant difference sets have the same cardinality. Hence
\(\widehat Y\) has the marginal distribution of a uniform Kawasaki proposal
from \(Y_t\).

Let
\(
        p_X=\alpha(X_t,\widehat X),
        p_Y=\alpha(Y_t,\widehat Y),
\)
where \(\alpha\) denotes the  acceptance probability,
that is,
\[
        \alpha(Z,Z')
        =
        \frac{\mu_\beta^k(Z')}
        {\mu_\beta^k(Z)+\mu_\beta^k(Z')}
\]
whenever \(Z'\) is obtained from \(Z\) by one legal Kawasaki exchange. We
couple the two acceptance decisions maximally: both proposals are accepted with
probability \(\min\{p_X,p_Y\}\), both are rejected with probability
\(\min\{1-p_X,1-p_Y\}\), and on the remaining probability mass exactly one
proposal is accepted, so that the two marginals are Bernoulli\((p_X)\) and
Bernoulli\((p_Y)\), respectively.

\begin{proof}[Proof of \cref{le-5}]
All constants below are uniform over \((p,\beta)\in B\). Let
\(\varepsilon=\varepsilon_0(B)>0\) be chosen below. Fix
%\(0<\varepsilon\le\varepsilon_0\) and
\(x,y\in\Gamma_{p,\varepsilon}\). Write
\[
    D=E_1(x)\setminus E_1(y),\qquad
    \widetilde D=E_1(y)\setminus E_1(x),\qquad
    r=|D|=|\widetilde D|.
\]
If \(r=0\), there is nothing to prove. Set
\[
    C_1=E_1(x)\cap E_1(y),\qquad
    C_0=E_0(x)\cap E_0(y),
\]
so that
\[
    |C_1|=Np-r,\qquad |C_0|=N(1-p)-r.
\]

Let \((\widehat x,\widehat y)\) be the coupled proposals defined above,
and let \((X',Y')\) be the states after the coupled acceptance decisions.
Set
\[
    D'=E_1(X')\setminus E_1(Y').
\]
For fixed \(x,y,e_1,e_2\), define
\[
    b(e_1,e_2)
    :=
    \mathbb E\left[
        \min\{\alpha(x,\widehat x),\alpha(y,\widehat y)\}
        \,\middle|\,x,y,e_1,e_2
    \right]
\]
and
\[
    o(e_1,e_2)
    :=
    \mathbb E\left[
        |\alpha(x,\widehat x)-\alpha(y,\widehat y)|
        \,\middle|\,x,y,e_1,e_2
    \right],
\]
where the conditional expectations are only over the remaining random
choices \(e_3,e_4\), when needed.

If \((e_1,e_2)\in D\times C_0\), simultaneous acceptance decreases
\(|D|\) by \(1\). The same holds if
\((e_1,e_2)\in C_1\times\widetilde D\). If
\((e_1,e_2)\in C_1\times C_0\), unequal acceptances can increase
\(|D|\) by at most \(1\). Finally, for
\((e_1,e_2)\in D\times\widetilde D\), define
\[
\begin{aligned}
    \widetilde b(e_1,e_2)
    &:=
    \mathbb E_{e_3,e_4}\Big[
        \min\{\alpha(x,\widehat x),\alpha(y,\widehat y)\}\\
    &\hspace{20mm}\times
        \bigl(
        2-\mathbf 1_{\{e_3=e_2\}}
          -\mathbf 1_{\{e_4=e_1\}}
        \bigr)
    \Big],
\end{aligned}
\]
where \(e_3\) and \(e_4\) are chosen independently and uniformly from
\(\widetilde D\) and \(D\), respectively. Under simultaneous acceptance,
\[
    |D'|-|D|
    =
    -2+\mathbf 1_{\{e_3=e_2\}}
       +\mathbf 1_{\{e_4=e_1\}}.
\]
If exactly one proposal is accepted, then \(|D|\) decreases by \(1\).
Discarding this additional favorable contribution, we obtain
\begin{align}
    \mathbb E\bigl[|D'|\mid x,y\bigr]
    \le\;& r
    -\frac{1}{k(N-k)}
      \sum_{e_1\in D}\sum_{e_2\in C_0}b(e_1,e_2)
    \notag\\
    &-\frac{1}{k(N-k)}
      \sum_{e_1\in C_1}\sum_{e_2\in\widetilde D}b(e_1,e_2)
    \notag\\
    &-\frac{1}{k(N-k)}
      \sum_{e_1\in D}\sum_{e_2\in\widetilde D}
      \widetilde b(e_1,e_2)
    +\frac{1}{k(N-k)}
      \sum_{e_1\in C_1}\sum_{e_2\in C_0}o(e_1,e_2).
    \label{eq:Dt-contraction-start}
\end{align}

For a configuration \(Z\) and a legal exchange
\(Z^{s\leftrightarrow t}\), we have
\[
    \alpha(Z,Z^{s\leftrightarrow t})
    =
    \ell\left(
        \frac{2\beta}{n}
        \{d(t,Z)-d(s,Z)-\mathbf 1_{\{s\sim t\}}\}
    \right),
\]
where
\[
    \ell(u)=\frac{e^u}{1+e^u}.
\]
Since \(x,y\in\Gamma_{p,\varepsilon}\), the argument of \(\ell\)
is \(O_B(\varepsilon+n^{-1})\) for every acceptance probability
appearing above. Since \(\ell(0)=1/2\) and
\(0<\ell'(u)\le1/4\), after increasing \(C_B\) all such probabilities
are bounded below by
\[
    a_\varepsilon
    :=
    \frac12-C_B(\varepsilon+n^{-1}).
\]
Therefore, for \((e_1,e_2)\in D\times\widetilde D\),
\[
\begin{aligned}
    \widetilde b(e_1,e_2)
    &\ge
    a_\varepsilon
    \mathbb E_{e_3,e_4}
    \bigl[
        2-\mathbf 1_{\{e_3=e_2\}}
          -\mathbf 1_{\{e_4=e_1\}}
    \bigr] \\
    &=
    a_\varepsilon\left(2-\frac2r\right).
\end{aligned}
\]
Hence the total negative numerator in
\eqref{eq:Dt-contraction-start} is at least
\[
\begin{aligned}
    a_\varepsilon
    \Big[
        r\{N(1-p)-r\}
        +(Np-r)r
        +\left(2-\frac2r\right)r^2
    \Big]
    =
    a_\varepsilon r(N-2).
\end{aligned}
\]
Since \(k=Np\) and \(N-k=N(1-p)\), the negative contribution is at most
\[
    -\frac{a_\varepsilon r(N-2)}{N^2p(1-p)}
    =
    -\frac{r}{Np(1-p)}
    \left(1-\frac2N\right)a_\varepsilon.
\]
Hence, after changing \(C_B\),
\begin{equation}\label{eq:negative-contribution}
    -\frac{r}{Np(1-p)}
    \left(
        \frac12-C_B(\varepsilon+n^{-1})
    \right)
\end{equation}
is an upper bound for the negative contribution.

It remains to bound the positive term in
\eqref{eq:Dt-contraction-start}.  For
\((e_1,e_2)\in C_1\times C_0\), no additional choice
\(e_3,e_4\) is needed, and hence
\[
    o(e_1,e_2)
    =
    \left|
        \alpha(x,x^{e_1\leftrightarrow e_2})
        -
        \alpha(y,y^{e_1\leftrightarrow e_2})
    \right|.
\]
Using
\[
    \alpha(Z,Z^{e_1\leftrightarrow e_2})
    =
    \ell\left(
        \frac{2\beta}{n}
        \{d(e_2,Z)-d(e_1,Z)-\mathbf 1_{\{e_1\sim e_2\}}\}
    \right),
    \qquad
    \ell(u)=\frac{e^u}{1+e^u},
\]
and \(0<\ell'(u)\le1/4\), we obtain
\[
\begin{aligned}
    o(e_1,e_2)
    \le
    \frac{\beta}{2n}
    \Big(
        |d(e_1,x)-d(e_1,y)|
        +
        |d(e_2,x)-d(e_2,y)|
    \Big).
\end{aligned}
\]

For every edge-position \(h\),
\[
\begin{aligned}
    d(h,x)-d(h,y)
    &=
    \sum_{f\in D}\mathbf 1_{\{f\sim h\}}
    -
    \sum_{f\in\widetilde D}\mathbf 1_{\{f\sim h\}},
\end{aligned}
\]
and hence
\[
    |d(h,x)-d(h,y)|
    \le
    \sum_{f\in D\cup\widetilde D}\mathbf 1_{\{f\sim h\}}.
\]
Since \(C_1\subseteq E_1(x)\) and \(x\in\Gamma_{p,\varepsilon}\),
\[
\begin{aligned}
    \sum_{e_1\in C_1}|d(e_1,x)-d(e_1,y)|
    &\le
    \sum_{f\in D\cup\widetilde D}
    |\{e_1\in C_1:e_1\sim f\}| \\
    &\le
    \sum_{f\in D\cup\widetilde D} d(f,x)
    \le
    4rn(p+\varepsilon).
\end{aligned}
\]
Similarly, since \(C_0\subseteq E_0(x)\),
\[
\begin{aligned}
    \sum_{e_2\in C_0}|d(e_2,x)-d(e_2,y)|
    &\le
    \sum_{f\in D\cup\widetilde D}
    |\{e_2\in C_0:e_2\sim f\}| \\
    &\le
    \sum_{f\in D\cup\widetilde D}
    \{2(n-2)-d(f,x)\} \\
    &\le
    4rn(1-p+\varepsilon).
\end{aligned}
\]
Therefore, using
\(|C_1|\le Np\) and \(|C_0|\le N(1-p)\),
\[
\begin{aligned}
    \sum_{e_1\in C_1}\sum_{e_2\in C_0}o(e_1,e_2)
    &\le
    \frac{\beta}{2n}
    \Bigg[
        |C_0|
        \sum_{e_1\in C_1}|d(e_1,x)-d(e_1,y)| \\
    &\hspace{23mm}
        +
        |C_1|
        \sum_{e_2\in C_0}|d(e_2,x)-d(e_2,y)|
    \Bigg] \\
    &\le
    Nr\bigl(
        4\beta p(1-p)+2\beta\varepsilon
    \bigr).
\end{aligned}
\]
Since \(k(N-k)=N^2p(1-p)\), the positive contribution is at most
\begin{equation}\label{eq:positive-contribution}
    \frac{r}{Np(1-p)}
    \left(
        4\beta p(1-p)+O_B(\varepsilon)
    \right).
\end{equation}

Combining \eqref{eq:negative-contribution},
\eqref{eq:positive-contribution}, and
\eqref{eq:Dt-contraction-start}, we get
\[
\begin{aligned}
    \mathbb E\bigl[|D'|\mid x,y\bigr]
    &\le
    r
    -
    \frac{r}{Np(1-p)}
    \left(
        \frac12-4\beta p(1-p)-C_B(\varepsilon+n^{-1})
    \right).
\end{aligned}
\]
By \cref{eq:kappaB},
\[
    \frac12-4\beta p(1-p)\ge\kappa_B.
\]
Thus, by choosing \(\varepsilon=\varepsilon_0(B)\) sufficiently small
and then \(n\) sufficiently large,
\[
    \frac12-4\beta p(1-p)-C_B(\varepsilon+n^{-1})
    \ge \frac{\kappa_B}{2}.
\]
Since \(N\asymp n^2\) and \(p(1-p)\) is uniformly bounded away from zero
on \(B\), there is a constant \(c_B>0\) such that
\[
    \mathbb E\bigl[|D'|\mid x,y\bigr]
    \le
    \left(1-\frac{c_B}{n^2}\right)|D|.
\]
Since \(|D|=\rho(x,y)\) and \(|D'|=\rho(X',Y')\), the desired
contraction follows. 
%Taking \(\varepsilon=\varepsilon_0\) proves the lemma.
\end{proof}

\begin{appendices}
\section{Proof of the threshold \texorpdfstring{$c_p/2$}{c-p/2} for the unconditional model}\label{others}
\begin{proposition}
Recall from \cref{eq:I(W)} that
\[
        I(W):=\frac12\int_{[0,1]^2}
        \bigl[W\log W+(1-W)\log(1-W)\bigr]\dd x\dd y
\]
and define the unconditional variational functional (as in \cref{eq:variationUnconditional})
\[
        \Phi_{\alpha,\beta}(W)
        :=\alpha  e(W)+\beta  t(W)-I(W).
\]
If \(W\equiv p\) is a global maximizer of $\Phi_{\alpha,\beta}$ for some \(\alpha \), then
necessarily
\[
        \alpha =\frac12\log\frac{p}{1-p}-2\beta p.
\]
For this value of \(\alpha \), the following hold:
\begin{enumerate}
\item if \(\beta <c_p/2\), then \(W\equiv p\) is the unique global
maximizer;
\item if \(\beta =c_p/2\), then \(W\equiv p\) is a global maximizer,
and it is unique if and only if \(p=1/2\);
\item if \(\beta >c_p/2\), then \(W\equiv p\) is not a global maximizer.
\end{enumerate}
\end{proposition}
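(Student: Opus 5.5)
The plan is to reduce everything to a one-dimensional comparison, using the same degree decomposition as in the proof of \cref{prop:1} together with \cref{lem:sharp}.

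\textbf{Necessary value of $\alpha$.} If $W\equiv p$ is a global maximizer, it is in particular a maximizer among constants. The function $\phi(q):=\Phi_{\alpha,\beta}(q\mathbf 1)=\alpha q+\beta q^2-\frac12[q\log q+(1-q)\log(1-q)]$ must then satisfy $\phi'(p)=0$. This reads $\alpha+2\beta p-\frac12\log\frac{p}{1-p}=0$, which gives the stated formula (note $p\in(0,1)$ is interior).

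\textbf{Reduction to $D(\cdot\|p)$.} Fix this $\alpha$ and write $W=p+U$, $d_W=p+\bar u+g$, where $\bar u=\int U$ and $\int g=0$. Then $t(W)=(p+\bar u)^2+\int g^2$. A direct expansion shows that the linear terms in $U$ from $\alpha e(W)$, $\beta p^2$ and the entropy cancel by the choice of $\alpha$, because $\frac12[W\log W+(1-W)\log(1-W)]$ minus its tangent at $p$ equals $\frac12 D(W\|p)$. This yields
\[
\Phi_{\alpha,\beta}(W)-\Phi_{\alpha,\beta}(p\mathbf 1)=\beta\bar u^2+\beta\int_0^1 g^2-\frac12\int D(W\|p).
\]
By \cref{lem:sharp}, $\frac12\int D(W\|p)\ge \frac{c_p}{2}\int U^2$. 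The orthogonal decomposition $U=\bar u+g(x)+g(y)+R$ with $R$ doubly centered gives $\int U^2=\bar u^2+2\int g^2+\int R^2$. Hence the difference is at most
\[
(\beta-\tfrac{c_p}{2})\bar u^2+(\beta-c_p)\int g^2-\tfrac{c_p}{2}\int R^2 .
\]

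\textbf{Parts 1 and 2.} For $\beta<c_p/2$ the right-hand side is $\le0$, with equality forcing $\bar u=0$, $g=0$ and $R=0$, hence $W\equiv p$. For $\beta=c_p/2$ it is still $\le0$, so $p\mathbf 1$ is a maximizer. Equality requires $g=R=0$, so $W\equiv q$ is constant, together with $D(q\|p)=c_p(q-p)^2$. By \cref{lem:sharp}, when $p\ne1/2$ the constant $q=1-p$ attains this, so $W\equiv1-p$ is a second maximizer. When $p=1/2$, the proof of \cref{lem:sharp} gives $F''>0$ away from $1/2$ (strict since $1/(q(1-q))>4$ for $q\ne1/2$), so equality holds only at $q=1/2$ and the maximizer is unique.

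\textbf{Part 3.} For $\beta>c_p/2$, the infimum characterization $c_p=\inf_{q\ne p}D(q\|p)/(q-p)^2$ gives some $q\ne p$ with $\beta(q-p)^2>\frac12D(q\|p)$. Plugging in $W\equiv q$, the identity above yields $\Phi(q\mathbf 1)>\Phi(p\mathbf 1)$.

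The only delicate step is the exact linear cancellation in the reduction step. Once that identity is verified, the rest follows from \cref{lem:sharp} and the equality cases identified in it.
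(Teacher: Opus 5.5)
Your proof is correct, and it differs from the paper's in how nonconstant graphons are ruled out. The paper cites Theorem~4.1 of Chatterjee and Diaconis: for $\beta\ge0$, every global maximizer of the unconditional two star functional is a constant graphon. Everything then reduces to the scalar identity $\phi(q)-\phi(p)=\beta(q-p)^2-\tfrac12 D(q\|p)$ and \cref{lem:sharp}.

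You instead prove the needed inequality for every $W\in\mathcal W$ directly. You combine the tangent-line identity $\psi(w)=\psi(p)+\psi'(p)(w-p)+D(w\|p)$, with $\psi(w)=w\log w+(1-w)\log(1-w)$, and the orthogonal decomposition $U=\bar u+g(x)+g(y)+R$. This is the same mechanism as in the proof of \cref{prop:1}, now with a nonzero mean mode $\bar u$.

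What your route buys:
\begin{itemize}
\item It is self-contained and does not rely on the external structural theorem; incidentally, it also does not use $\beta\ge0$.
\item The bound $(\beta-\tfrac{c_p}{2})\bar u^2+(\beta-c_p)\int g^2-\tfrac{c_p}{2}\int R^2$ makes the factor-of-two separation from the introduction explicit. The global-density mode $\bar u$ becomes unstable at $c_p/2$. The degree-fluctuation mode $g$, the only one that survives in the conditional problem, stays stable up to $c_p$.
\item You read off the equality cases directly: $W\equiv 1-p$ when $p\ne\tfrac12$, and strict convexity of $D(q\|\tfrac12)-2(q-\tfrac12)^2$ when $p=\tfrac12$.
\end{itemize}

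What the paper's route buys: it is shorter, and the cited theorem shows for every $\alpha$ that only constants need to be compared.
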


\begin{proof}
By \cite[Theorem~4.1]{chatterjee2013estimating}, every global maximizer is
a constant graphon. Thus it suffices to consider
\[
\phi(q):=\alpha q+\beta q^2
-\frac12\{q\log q+(1-q)\log(1-q)\},\qquad q\in[0,1].
\]
If \(q=p\) is a maximizer, then \(\phi'(p)=0\), which gives
\[
\alpha =\frac12\log\frac{p}{1-p}-2\beta p.
\]
For this value of \(\alpha \),
\[
\phi(q)-\phi(p)
=
\beta (q-p)^2-\frac12D(q\|p).
\]
By \cref{lem:sharp}, we have
\[
D(q\|p)\ge c_p(q-p)^2.
\]
Hence, if \(\beta <c_p/2\), then \(\phi(q)<\phi(p)\) for every
\(q\ne p\), proving uniqueness. If \(\beta =c_p/2\), then \(p\mathbf{1}\) is a
maximizer; the equality cases in \cref{lem:sharp} show
that it is unique exactly when \(p=1/2\).

Finally, if \(\beta >c_p/2\), the definition of \(c_p\) gives some
\(q\ne p\) such that \(D(q\|p)<2\beta (q-p)^2\). Thus
\(\phi(q)>\phi(p)\), so \(W\equiv p\) is not a global maximizer.
\end{proof}

\section{From degree concentration to Lemma~\ref{le-1}}\label{sec:degree-to-local}

We follow the cavity method of \cite{bresler2024metastable}. 
The argument proceeds from macroscopic cut-metric control to uniform local
regularity.  
It suffices to control all vertex degrees.
To obtain the degree control, we decompose the
cut ball into slices according to the degree of one cavity vertex \(u\) and compare their Gibbs weights through fixed-edge cavity switchings.
Lemma~\ref{twostar-shell} removes the negligible outer cut shell, while
Lemma~\ref{lem:twostar-cut-consequences} supplies the bulk estimates needed
to analyze cavity switchings.  Lemmas~\ref{lem:twostar-energy} and
\ref{lem:twostar-slice-ratio} compute, respectively, the energetic and
combinatorial costs of moving an atypical degree \(d\) to the reference
degree \(d_*\).  Their combination in
Lemma~\ref{lem:twostar-switching-ratio} yields the effective penalty
\[
        D(q\|p)-\beta(q-p)^2,
        \qquad q=d/n.
\]
Its positivity in the replica symmetric region then gives uniform degree
concentration.

For a vertex \(u\in[n]\), write
\[
        p_u(X)=\frac{\deg_X(u)}{n}.
\]
For \(d\in\{0,\ldots,n-1\}\), define the degree slice inside the cut ball by
\[
        \mathcal A^{\eta,k}_{u,d}
        =
        \{X\in \square_\eta^k(p):\deg_X(u)=d\},
\]
and its unnormalized weight by (recall the Hamiltonian from \cref{eq:Hamilton})
\[
        \mathcal Z^{\eta,k}_{u,d}
        =
        \sum_{X\in \mathcal A^{\eta,k}_{u,d}}
        e^{\mathcal H_\beta(X)}.
\]
For a general set %\(E\subseteq \Omega_k\), 
$E$, write
\[
        \mathcal Z(E):=\sum_{X\in E\cap\Omega_k}e^{\mathcal H_\beta(X)}.
\]
and let
\[
Bulk=[n]\backslash \{u\},\quad 
        N_{Bulk}=\binom{n-1}{2},
        \quad \text{and}\ 
        d_*=\lfloor p(n-1)\rfloor
\]
be the remaining $n-1$ vertices except for $u$, the total number of possible edges in the bulk, and the reference degree of $u$, respectively.
In the following, we will consider cavity switchings that moves edges connecting to $u$ to the bulk or vise versa. 
A cavity switching may change \(O(n)\) edge indicators and hence perturb the
cut norm by \(O(n^{-1})\) (see \cref{eq:cutchange}).  We first create room for these switchings by
showing that the outer shell between radii \(\eta/2\) and \(\eta\) carries
negligible Gibbs weight.
\begin{lemma}
\label{twostar-shell}
Under the standing assumption \((p,\beta)\in K\), for every sufficiently
small fixed \(\eta>0\), there exist constants \(c_{K,\eta}>0\) and
\(n_{K,\eta}<\infty\) such that, uniformly over \((p,\beta)\in K\), for all
 \(n\ge n_{K,\eta}\),
\[
        \mathcal Z\bigl(\square_\eta^k(p)\setminus\square_{\eta/2}^k(p)\bigr)
        \le
        e^{-c_{K,\eta}n^2}\,
        \mathcal Z\bigl(\square^k_\eta(p)\bigr).
\]
%where \(\mathcal Z(A)=\sum_{x\in A\cap\Omega_k}e^{\mathcal H_\beta(x)}\).
\end{lemma}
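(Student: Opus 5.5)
The plan is to reduce the statement to \cref{theo of appro}: the shell carries exponentially small mass $e^{-cn^2}$, while the whole ball $\square_\eta^k(p)$ carries almost all the mass. Write $\mathcal Z_{\mathrm{tot}}:=\sum_{X\in\Omega_k}e^{\mathcal H_\beta(X)}$, so that $\mathcal Z(E)=\mathcal Z_{\mathrm{tot}}\,\mu_\beta^k(E)$ for every $E\subseteq\Omega_k$. The claimed inequality is then equivalent to
\[
\mu_\beta^k\bigl(\square_\eta^k(p)\setminus\square_{\eta/2}^k(p)\bigr)\le e^{-c_{K,\eta}n^2}\mu_\beta^k\bigl(\square_\eta^k(p)\bigr).
\]

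First, the shell is contained in the event $\{d_\square(W_X,p\mathbf 1)>\eta/2\}$. Apply \cref{theo of appro} with radius $\eta/2$ and the compact set $K$. This gives, uniformly over $(p,\beta)\in K$ and admissible $k$, for $n\ge n_{K,\eta/2}$,
\[
\mu_\beta^k\bigl(\square_\eta^k(p)\setminus\square_{\eta/2}^k(p)\bigr)\le C_{K,\eta/2}e^{-c_{K,\eta/2}n^2}.
\]
From the proof we may even take $C=1$ and $c=\kappa_K\eta^2/64$.

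Second, apply the same proposition with radius $\eta$. It gives $\mu_\beta^k(\square_\eta^k(p))\ge 1-C_{K,\eta}e^{-c_{K,\eta}n^2}\ge 1/2$ for $n$ large, depending only on $K$ and $\eta$. Dividing the two bounds yields the claim with constant $2C_{K,\eta/2}e^{-c_{K,\eta/2}n^2}$. After enlarging $n_{K,\eta}$, this is at most $e^{-c n^2}$ with $c=c_{K,\eta/2}/2$.

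There is no genuine obstacle here; the only point requiring care is bookkeeping. The closed versus open boundary conventions in $\square_\eta$ are harmless, since the shell lies in $\{d_\square>\eta/2\}$ exactly as \cref{theo of appro} is stated. We must also check that all constants are uniform over $K$, which \cref{theo of appro} provides. The condition ``$\eta$ sufficiently small'' is not needed for this argument. It is presumably imposed for the later cavity lemmas, and the argument above holds for every fixed $\eta>0$.
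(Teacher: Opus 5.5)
Your proposal is correct and follows the paper's own proof. The paper likewise rewrites the ratio of partition functions as a ratio of $\mu_\beta^k$-probabilities and bounds the numerator by $\mu_\beta^k\bigl((\square_{\eta/2}^k(p))^c\bigr)$ via \cref{theo of appro} at radius $\eta/2$. It then bounds the denominator below by $1/2$ via \cref{theo of appro} at radius $\eta$.
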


\begin{proof}
By Proposition~\ref{theo of appro}, uniformly over \((p,\beta)\in K\), for
every fixed \(r>0\) there exist constants \(C_{K,r},c_{K,r}>0\) such that
\(
        \mu_\beta^k\bigl((\square_r^k(p))^c\bigr)
        \le C_{K,r} e^{-c_{K,r} n^2}
\)
for all sufficiently large  \(n\), with a lower threshold depending
only on \(K\) and \(r\).  Applying this estimate with \(r=\eta/2\) and
\(r=\eta\), we get
\[
\begin{aligned}
\frac{\mathcal Z(\square_\eta^k(p)\setminus\square_{\eta/2}^k(p))}
     {\mathcal Z(\square_\eta^k(p))}
&=
\frac{\mu_\beta^k(\square_\eta^k(p)\setminus\square_{\eta/2}^k(p))}
     {\mu_\beta^k(\square_\eta^k(p))}  \\
&\le
\frac{\mu_\beta^k((\square_{\eta/2}^k(p))^c)}
     {1-\mu_\beta^k((\square_\eta^k(p))^c)}
\le
\frac{C_{K,\eta/2}e^{-c_{K,\eta/2}n^2}}
     {1-C_{K,\eta}e^{-c_{K,\eta}n^2}}.
\end{aligned}
\]
For all sufficiently large \(n\), the denominator is at least \(1/2\) and the claim is proved.
\end{proof}
%We collect the elementary cut-norm consequences needed to control the one-site energy difference in the two star case.

We may therefore restrict atypical degree slices to the inner ball
\(\square_{\eta/2}^k(p)\), while allowing their switching images to lie in
\(\square_\eta^k(p)\).  To compare the corresponding Gibbs weights, we next
translate cut-metric closeness of the bulk graph into estimates for the
two star Hamiltonian.

\begin{lemma}\label{lem:twostar-cut-consequences}
Let $Y$ be the bulk graph with $M:=n-1$ vertices (except for the cavity vertex $u$).
Write
\(
        b_v:=\deg_Y(v), v\in Bulk.
\)
Assume that
\(
        \|Y-p\|_{\square,Bulk}\le\eta,
\)
where
\[
        \|Y-p\|_{\square,Bulk}
        :=
        M^{-2}\sup_{S,T\subseteq Bulk}
        \left|
        \sum_{x\in S,y\in T}(Y_{xy}-p)
        \right|.
\]
Then the following estimates hold.  The implicit constants are uniform whenever
\(p\) ranges over a fixed compact subinterval of \((0,1)\):
\[
    e(Y)
        =
        \frac {p}2M^2+O(\eta M^2+M), \qquad \text{(edge estimate)}
\]
\[
        \sum_{v\in Bulk}b_v(b_v-1)
        =
        p^2 M^3+O(\eta M^3+M^2). \qquad \text{(two star estimate)}
\]
Furthermore, if \(r:Bulk\to\{0,1\}\) and \(d=\sum_{v\in Bulk}r_v\), then
\[
        \sum_{v\in Bulk}r_v b_v
        =
        pMd+O(\eta M^2). \qquad \text{(weighted row bulk estimate)}
\]
Finally, if
\[
        \mathsf N_Y:=\{\{a,b\}\subset Bulk:Y_{ab}=0\},
        \qquad
        \mathsf E_Y:=\{\{a,b\}\subset Bulk:Y_{ab}=1\},
\]
then there exist thresholds \(\eta_0>0\) and \(n_0(\eta)<\infty\), chosen
uniformly whenever \(p\) ranges over the fixed compact subinterval above,
such that, for \(0<\eta\le\eta_0\) and \(n\ge n_0(\eta)\),
\[
        \frac1{|\mathsf N_Y|}
        \sum_{\{a,b\}\in\mathsf N_Y}(b_a+b_b)
        =
        2pM+O(\eta M+1), \qquad \text{(averaged degree estimate)}
\]
and
\[
        \frac1{|\mathsf E_Y|}
        \sum_{\{a,b\}\in\mathsf E_Y}(b_a+b_b)
        =
        2pM+O(\eta M+1).
\]
\end{lemma}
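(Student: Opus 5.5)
Each estimate will be obtained by testing the cut-norm hypothesis against a suitable pair of sets $S,T\subseteq Bulk$. Sums over ordered pairs $(x,y)$ include the diagonal, where $Y_{xx}=0$; this costs at most $pM$ and produces the $O(M)$ and $O(M^2)$ error terms.

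For the \emph{edge estimate}, take $S=T=Bulk$. Then $\sum_{x,y}(Y_{xy}-p)=2e(Y)-pM^2+O(M)$, and the cut bound gives $2e(Y)=pM^2+O(\eta M^2+M)$. For the \emph{weighted row bulk estimate}, take $S=\{v:r_v=1\}$ and $T=Bulk$. Then
\[
\sum_v r_vb_v-pMd=\sum_{x\in S,y\in Bulk}(Y_{xy}-p)+O(d),
\]
and since $d\le M$ this is $O(\eta M^2)$ once the $O(M)$ term is absorbed. If one insists on no additive $M$, it suffices to note $M\le\eta M^2$ for $n\ge n_0(\eta)$.

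For the \emph{two star estimate}, write $b_v=pM+g_v$ with $\sum_v g_v=2e(Y)-pM^2=O(\eta M^2+M)$. Then
\[
\sum_v b_v^2=p^2M^3+2pM\sum_v g_v+\sum_v g_v^2 .
\]
The middle term is $O(\eta M^3+M^2)$. The key step is to bound $\sum_v g_v^2$ by a cut-norm argument. Let $S_+=\{v:g_v>0\}$ and $S_-=\{v:g_v<0\}$. The row sums give $\sum_{v\in S_\pm}|g_v|\le \eta M^2+O(M)$, and since $|g_v|\le M$,
\[
\sum_v g_v^2\le M\sum_v|g_v|=O(\eta M^3+M^2).
\]
Subtracting $\sum_v b_v=2e(Y)=O(M^2)$ then gives the claim. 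This is the same splitting by the sign of $g$ that controls $\|d_W-p\|_1$ by $2\|W-p\|_\square$.

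For the \emph{averaged degree estimates}, write $\sum_{\{a,b\}\in\mathsf E_Y}(b_a+b_b)=\sum_v b_v^2$, because each vertex $v$ appears in exactly $b_v$ edges. By the two star and edge estimates this equals $p^2M^3+O(\eta M^3+M^2)$, while $|\mathsf E_Y|=\tfrac p2M^2(1+O(\eta+M^{-1}))$. The ratio is therefore $2pM+O(\eta M+1)$, using that $p$ is bounded away from $0$ and the choice of $\eta_0$ small. For non-edges, observe that each vertex $v$ lies in $M-1-b_v$ non-edges, so
\[
\sum_{\mathsf N_Y}(b_a+b_b)=\sum_v b_v(M-1-b_v)=2e(Y)(M-1)-\sum_v b_v^2=(p-p^2)M^3+O(\eta M^3+M^2).
\]
We also have $|\mathsf N_Y|=\binom M2-e(Y)=\tfrac{1-p}{2}M^2(1+O(\eta+M^{-1}))$. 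The ratio is again $2pM+O(\eta M+1)$, using that $1-p$ is bounded away from $0$.

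I expect the only genuine obstacle to be the variance bound $\sum_v g_v^2=O(\eta M^3+M^2)$. It is handled by the $L^1$ control via the sign split together with the trivial bound $|g_v|\le M$. Uniformity over a compact subinterval of $p$ is immediate, since the constants involve only $p^{-1}$ and $(1-p)^{-1}$ in the denominators.
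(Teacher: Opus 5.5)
Your proof is correct. The edge, weighted row--bulk, and both averaged-degree estimates follow the paper's argument essentially verbatim, including the identities $\sum_{\mathsf E_Y}(b_a+b_b)=\sum_v b_v^2$ and $\sum_{\mathsf N_Y}(b_a+b_b)=\sum_v b_v(M-1-b_v)$.

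The genuine difference is in the two-star estimate. The paper first extends the cut bound to $[0,1]$-valued weights by a layer-cake representation: with $H=Y-p$, it gets $|\sum_{x,y}a_xc_yH_{xy}|\le\eta M^2$. It then uses the counting-lemma telescoping $Y_{vu}Y_{vw}-p^2=H_{vu}Y_{vw}+pH_{vw}$. This gives $\sum_v b_v^2-p^2M^3=\sum_{v,u}H_{vu}b_v+pM\sum_{v,w}H_{vw}$, and the first sum is controlled with weights $a_v=b_v/M$, $c\equiv 1$. You instead expand $b_v=pM+g_v$ and bound $\sum_v g_v^2\le\max_v|g_v|\cdot\sum_v|g_v|$, taking the $L^1$ control from the sign split.

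Since $\sum_{v,u}H_{vu}b_v=\sum_v b_vg_v$, both arguments control essentially the same quantity, with a bounded weight ($b_v/M\le 1$, resp. $|g_v|\le M$) playing the $L^\infty$ role. The routes differ in what they buy. Yours only tests the cut norm on rectangles $S\times Bulk$, so it shows that the estimate depends only on $L^1$-closeness of the degree profile to $pM$. The paper's telescoping is the general counting-lemma mechanism, which also handles subgraph counts that are not functions of degrees. Both give the stated error $O(\eta M^3+M^2)$.

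Your $O(d)$ hedge in the row--bulk estimate is unnecessary if the diagonal terms $Y_{xx}-p=-p$ are included in the cut sum, as the paper's proof implicitly does. In that case $\sum_v r_vb_v-pMd=\sum_{x\in S,\,y\in Bulk}(Y_{xy}-p)$ holds exactly, and no threshold $n_0(\eta)$ is needed for that estimate.
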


\begin{proof}
Taking \(S=T=Bulk\) in the cut norm gives
\[
        \sum_{x,y\in Bulk}Y_{xy}
        =
        pM^2+O(\eta M^2).
\]
Since the diagonal contributes only \(O(M)\), this implies
\[
        e(Y)
        =
        \frac {p}2M^2+O(\eta M^2+M).
\]

We next prove the two star estimate.  Set
\(
        H_{xy}:=Y_{xy}-p.
\)
%We first record the following weighted consequence of the cut norm.  
For
\(0\le a_x\le 1\) and \(0\le c_y\le 1\), we have
\[
\begin{aligned}
        M^{-2}\sum_{x,y\in Bulk}a_xc_yH_{xy}
        =
        M^{-2}\int_0^1\int_0^1
        \sum_{\substack{x,y\in Bulk:a_x>s\\ x,y\in Bulk:c_y>t}}
        H_{xy}\,ds\,dt .
\end{aligned}
\]
Hence
\begin{equation}\label{eq:weighted-cut-bound}
        \left|
        M^{-2}\sum_{x,y\in Bulk}a_xc_yH_{xy}
        \right|
        \le \eta .
\end{equation}

Now
\[
        \sum_{v\in Bulk}b_v^2
        =
        \sum_{v,u,w\in Bulk}Y_{vu}Y_{vw}.
\]
Using
\[
        Y_{vu}Y_{vw}-p^2
        =
        H_{vu}Y_{vw}+pH_{vw},
\]
we obtain
\[
\begin{aligned}
        M^{-3}\sum_{v,u,w\in Bulk}Y_{vu}Y_{vw}-p^2 =
        M^{-3}\sum_{v,u,w\in Bulk}H_{vu}Y_{vw}
        +
        pM^{-3}\sum_{v,u,w\in Bulk}H_{vw}.
\end{aligned}
\]
For the second term,
\[
        pM^{-3}\sum_{v,u,w\in Bulk}H_{vw}
        =
        pM^{-2}\sum_{v,w\in Bulk}H_{vw}
        =
        O(\eta).
\]
For the first term,
\[
\begin{aligned}
        M^{-3}\sum_{v,u,w\in Bulk}H_{vu}Y_{vw}
        =
        M^{-3}\sum_{v,u\in Bulk}H_{vu}b_v                
        =
        M^{-2}\sum_{v,u\in Bulk}H_{vu}\frac{b_v}{M}.
\end{aligned}
\]
Since \(0\le b_v/M\le 1\), \eqref{eq:weighted-cut-bound} gives
\[
        M^{-3}\sum_{v,u,w\in Bulk}H_{vu}Y_{vw}
        =
        O(\eta).
\]
Therefore
\[
        M^{-3}\sum_{v\in Bulk}b_v^2
        =
        p^2+O(\eta).
\]
Since
\(
        b_v(b_v-1)=b_v^2-b_v
\)
and
\(
        0\le \sum_{v\in Bulk}b_v\le M^2,
\)
we have
\[
        M^{-3}\sum_{v\in Bulk}b_v
        =
        O(M^{-1}).
\]
Consequently,
\[
        M^{-3}\sum_{v\in Bulk}b_v(b_v-1)
        =
        p^2+O(\eta+M^{-1}),
\]
which is equivalent to
\[
        \sum_{v\in Bulk}b_v(b_v-1)
        =
        p^2 M^3+O(\eta M^3+M^2).
\]

For the weighted row-bulk estimate, write
\[
        \sum_{v\in Bulk}r_v b_v
        =
        \sum_{v,w\in Bulk} r_vY_{vw}.
\]
Thus
\[
\begin{aligned}
        \sum_{v,w\in Bulk} r_vY_{vw}
        &=
        p\sum_{v,w\in Bulk}r_v
        +
        \sum_{v,w\in Bulk}r_v(Y_{vw}-p)      =
        pMd
        +
        \sum_{v,w\in Bulk}r_vH_{vw}.
\end{aligned}
\]
The last term is bounded by \(\eta M^2\) by
\eqref{eq:weighted-cut-bound}, since \(r\) is the indicator of a subset of
\textit{Bulk} and the second weight is the constant function \(1\).  Hence
\[
        \sum_{v\in Bulk}r_v b_v
        =
        pMd+O(\eta M^2).
\]

It remains to prove the two averaged degree estimates.  For nonedges,
\[
\begin{aligned}
        \sum_{\{a,b\}\in\mathsf N_Y}(b_a+b_b)
        &=
        \sum_{a\in Bulk}b_a\bigl((M-1)-b_a\bigr)  =
        (M-1)\sum_{a\in Bulk}b_a-\sum_{a\in Bulk}b_a^2       \\
        &=
        (M-2)\sum_{a\in Bulk}b_a-\sum_{a\in Bulk}b_a(b_a-1).
\end{aligned}
\]
Using
\[
        \sum_{a\in Bulk}b_a=2| \mathsf E_Y|=pM^2+O(\eta M^2+M)
\]
and the two star estimate above, we obtain
\[
        \sum_{\{a,b\}\in\mathsf N_Y}(b_a+b_b)
        =
        p(1-p)M^3+O(\eta M^3+M^2).
\]
Also, we have
\[
        |\mathsf N_Y|
        =
        \binom M2-| \mathsf E_Y|
        =
        \frac{1-p}{2}M^2+O(\eta M^2+M).
\]
Dividing the two estimates gives
\[
        \frac1{|\mathsf N_Y|}
        \sum_{\{a,b\}\in\mathsf N_Y}(b_a+b_b)
        =
        2pM+O(\eta M+1),
\]
provided \(\eta\) is sufficiently small.

For edges,
\[
        \sum_{\{a,b\}\in\mathsf E_Y}(b_a+b_b)
        =
        \sum_{a\in Bulk}b_a^2
        =
        \sum_{a\in Bulk}b_a(b_a-1)+\sum_{a\in Bulk}b_a.
\]
Therefore
\[
        \sum_{\{a,b\}\in\mathsf E_Y}(b_a+b_b)
        =
        p^2 M^3+O(\eta M^3+M^2).
\]
Since
\[
        |\mathsf E_Y|
        =
        \frac {p}2M^2+O(\eta M^2+M),
\]
we get
\[
        \frac1{|\mathsf E_Y|}
        \sum_{\{a,b\}\in\mathsf E_Y}(b_a+b_b)
        =
        2pM+O(\eta M+1).
\]
This completes the proof.
\end{proof}

%We now prove the one-site energy estimate.  This is the point where the two star structure gives the exact correction \((q-p)^2\).

%The preceding estimates control both the row--bulk interaction and the average effect of adding or deleting bulk edges.  They now give the energetic contribution to the cavity comparison; the quadratic two star structure produces the precise correction \((q-p)^2\).

For \(d,d'\in\{0,\ldots,n-1\}\), a \emph{single-vertex
fixed-edge switching from \(d\) to \(d'\) at \(u\)} is defined as follows.
If \(d>d'\), it deletes \(d-d'\) edges incident to \(u\) and adds the same
number of nonedges within \(Bulk=[n]\setminus\{u\}\).  If \(d<d'\), it adds
\(d'-d\) nonedges incident to \(u\) and deletes the same number of edges
within \(Bulk\).  In either case the degree of \(u\) changes from \(d\) to
\(d'\), while the total number of edges is preserved.

\begin{lemma}\label{lem:twostar-energy}
Let \(X\in\mathcal A^{\eta/2,k}_{u,d}\), and set
\(
        q=d/n, q_*=d_*/n.
\)
Let \(X^\sigma\) be chosen uniformly among all single-vertex fixed-edge
switchings which change the degree of \(u\) from \(d\) to \(d_*\), while
preserving the total number of edges.  Then there exists
\(n_0=n_0(K,\eta)<\infty\) such that, uniformly over the standing
\((p,\beta)\in K\), for every  \(n\ge n_0\),
\[
        \mathcal H_\beta(X)
        -
        \mathbb E_\sigma\mathcal H_\beta(X^\sigma)
        =
        \beta n(q-p)^2+O(\eta n+1).
\]
\end{lemma}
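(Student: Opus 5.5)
We compute $\mathcal H_\beta(X)-\mathcal H_\beta(X^\sigma)$ exactly via $S_2$, then average over the random switching and estimate each piece with \cref{lem:twostar-cut-consequences}. Write $Y$ for the bulk graph of $X$ and $b_v=\deg_Y(v)$. Let $r_v=X_{uv}$, so that $\sum_v r_v=d$. Then
\[
S_2(X)=\binom d2+\sum_{v\in Bulk}\binom{b_v+r_v}{2}
=\binom d2+\sum_v\binom{b_v}2+\sum_v r_vb_v .
\]
Since $X\in\square^k_{\eta/2}(p)$ and removing one vertex changes the cut norm by $O(1/n)$, we have $\|Y-p\|_{\square,Bulk}\le\eta$ for $n$ large, so the lemma applies to $Y$.

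Treat the case $d>d_*$; the case $d<d_*$ is symmetric with the roles of edges and nonedges swapped. Set $m=d-d_*$. The switching deletes a uniformly random $m$-subset $R$ of the $u$-neighbours and adds a uniformly random $m$-set $F$ of bulk nonedges, independently. The star term at $u$ changes by
\[
\binom d2-\binom{d_*}2=\tfrac12(d-d_*)(d+d_*)+O(n).
\]
Deleting $R$ lowers $\sum_v r_vb_v$ by $\sum_{v\in R}b_v$, whose expectation is $\frac md\sum_v r_vb_v=m\,pM+O(\eta n^2/d\cdot m)$ by the weighted row bulk estimate. To avoid dividing by a small $d$, I will instead bound the deleted contribution directly as $m(pM)+O(\eta n^2)$, using that any subset $S$ of size $m$ has $\sum_S b_v=pMm+O(\eta M^2)$.

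Adding the nonedges in $F$ changes $\sum_v\binom{b_v}2$. Adding one nonedge $\{a,b\}$ increases it by $b_a+b_b$, and there are $O(m^2)$ interaction corrections from edges of $F$ sharing endpoints, plus a cross correction with the new $r$. After adding, the $r$-term picks up $\sum_{v\in F\text{ endpoints}} r'_v$, which is $O(m)$ per edge. Averaging over $F$ with the averaged degree estimate gives a bulk gain of $m(2pM)+O(\eta nm+m)+(\text{second order})$. The second-order term counts pairs of added edges sharing a vertex. Its expectation is $\binom m2\cdot O(1/M)\cdot O(1)=O(m^2/n)=O(n)$, which is fine since $m\le n$.

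Combining these, the total change in $S_2$ is
\[
S_2(X)-\E S_2(X^\sigma)=\tfrac12 m(d+d_*)+pMm-2pMm+O(\eta n^2+n)
=m\Big(\tfrac{d+d_*}{2}-pn\Big)+O(\eta n^2+n).
\]
With $d=qn$ and $d_*=pn+O(1)$ this equals $\tfrac{n^2}{2}(q-p)^2+O(\eta n^2+n)$. Multiplying by $2\beta/n$ gives $\beta n(q-p)^2+O(\eta n+1)$, as claimed. The case $d<d_*$ gives the same quadratic, since the added $u$-edges contribute $+pM$ each and the deleted bulk edges each contribute $-2pM$ by the edge-average estimate.

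The main obstacle is the bookkeeping of error terms. It must be checked that every correction is $O(\eta n^2+n)$ uniformly in $d$, in particular that the first-order bulk changes are governed by the cut-norm estimates with error $\eta n^2$ and not $\eta n^2\cdot m/n$ times something larger. It must also be checked that the removed or added sets $R$, $F$ (of size up to $n$) do not create $\Omega(n^2)$ overlap terms. The subset form of the weighted row bulk estimate and the $O(m^2/n)$ bound for the overlaps of $F$ are the two points I would verify most carefully.
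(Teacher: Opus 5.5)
Your proposal is correct and follows essentially the paper's argument. It uses the same split of $S_2$ into the star at $u$, the bulk sum $\sum_v\binom{b_v}{2}$ and the row--bulk interaction $\sum_v r_vb_v$, controlled by the weighted row bulk estimate (which indeed holds for any indicator $r$), the averaged degree estimates, and the $O(m^2/n)$ overlap count. The only difference is cosmetic: you track the change of $\sum_v r_vb_v$ directly, which needs cut control only for the bulk of $X$, whereas the paper evaluates the row part separately at $X$ and at $X^\sigma\in\square^k_\eta(p)$. Note also that the cross correction satisfies $\sum_v r'_v c_v\le 2m$, i.e.\ it is $O(1)$ per added edge and $O(m)$ in total, not ``$O(m)$ per edge'' as you wrote.
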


\begin{proof}
Recall \(Bulk=[n]\setminus\{u\}\), \(M=|Bulk|=n-1\), and let \(Y=X|_{Bulk}\) be the bulk
graph.  For \(v\in Bulk\), write
\(
        r_v=X_{uv}.
\)
Thus
\(
        d=\deg_X(u)=\sum_{v\in Bulk}r_v,
\)
and for \(v\in Bulk\),
\(
        \deg_X(v)=b_v+r_v.
\)

Set
\[
        \mathcal S(X)=\frac1n\sum_{w\in[n]}
        \deg_X(w)(\deg_X(w)-1)
\]
and recall
        $\mathcal H_\beta(X)=\beta\mathcal S(X).$
Then
\[
\begin{aligned}
        \mathcal S(X)
        &=
        \frac1n d(d-1)
        +
        \frac1n\sum_{v\in Bulk}(b_v+r_v)(b_v+r_v-1)  \\
        &=
        \frac1n\sum_{v\in Bulk}b_v(b_v-1)
        +
        \frac1n d(d-1)
        +
        \frac2n\sum_{v\in Bulk}r_vb_v.
\end{aligned}
\]
We call the first term the bulk part and the last two terms the row part.

First consider the row part.  Since \(X\in\square_{\eta/2}^k(p)\), the bulk graph
\(Y\) satisfies
\(
        \|Y-p\|_{\square,Bulk}\le \eta
\)
for all sufficiently large \(n\).  By Lemma~\ref{lem:twostar-cut-consequences},
\[
        \sum_{v\in Bulk}r_vb_v
        =
        pMd+O(\eta n^2).
\]
Hence
\[
\begin{aligned}
        \frac1n d(d-1)+\frac2n\sum_{v\in Bulk}r_vb_v
        &=
        \frac1n d(d-1)
        +
        \frac2n\{pMd+O(\eta n^2)\}        \\
        &=
        nq^2-q+2p(n-1)q+O(\eta n)        \\
        &=
        nq^2+2pnq+O(\eta n+1).
\end{aligned}
\]
For every switching image \(X^\sigma\), the degree of \(u\) is \(d_*\), and
\(X^\sigma\in\square_\eta^k(p)\).  Applying the same estimate to
\(X^\sigma\) gives, uniformly in \(\sigma\),
\[
        \frac1n d_*(d_*-1)
        +
        \frac2n\sum_{v\in Bulk}r_v^\sigma b_v^\sigma
        =
        nq_*^2+2pnq_*+O(\eta n+1).
\]
Therefore the row part contributes
\(
        n(q^2-q_*^2)+2pn(q-q_*)+O(\eta n+1)
\)
to
\(
        \mathcal S(X)
        -
        \mathbb E_\sigma \mathcal S(X^\sigma).
\)

We now estimate the bulk part.  Let
\[
        S_0(Y)=\frac1n\sum_{v\in Bulk}b_v(b_v-1).
\]
Suppose first that \(d>d_*\), and put \(L=d-d_*\).  The switching deletes
\(L\) row edges incident to \(u\) and adds \(L\) bulk nonedges.  If a single
bulk nonedge \(\{a,b\}\) is added, then only the bulk degrees of \(a\) and
\(b\) increase by one, so
\[
        S_0(Y^{+ab})-S_0(Y)
        =
        \frac2n(b_a+b_b).
\]
Averaging over bulk nonedges and using
Lemma~\ref{lem:twostar-cut-consequences}, we obtain
\[
        \frac1{|\mathsf N_Y|}
        \sum_{\{a,b\}\in\mathsf N_Y}
        \bigl(S_0(Y^{+ab})-S_0(Y)\bigr)
        =
        4p+O(\eta+n^{-1}).
\]

The switching adds \(L\le n\) bulk nonedges simultaneously.  If \(c_v\) is the
number of selected added bulk edges incident to \(v\), then
\[
        (b_v+c_v)(b_v+c_v-1)-b_v(b_v-1)
        =
        2c_vb_v+c_v(c_v-1).
\]
The first term is exactly the sum of single-edge increments.  The second term
counts ordered pairs of selected added edges sharing the vertex \(v\).  Since
the number of pairs of bulk pairs sharing an endpoint is \(O(n^3)\), and since
a fixed pair of bulk pairs is selected with probability \(O(L^2/n^4)\), the
expected number of such selected intersecting pairs is
\(
        O(n^3)\cdot O(L^2/n^4)=O(L^2/n)\le O(n).
\)
After the prefactor \(1/n\), the expected nonlinear contribution is \(O(1)\).
Consequently,
\[
        \mathbb E_\sigma\{S_0(Y^\sigma)-S_0(Y)\}
        =
        L\{4p+O(\eta+n^{-1})\}+O(1).
\]
Since \(L=d-d_*=n(q-q_*)\), this becomes
\[
        \mathbb E_\sigma\{S_0(Y^\sigma)-S_0(Y)\}
        =
        4pn(q-q_*)+O(\eta n+1).
\]

If \(d<d_*\), then \(L=d_*-d\), and the switching adds \(L\) row edges and
deletes \(L\) bulk edges.  If a single bulk edge \(\{a,b\}\) is deleted, then
only the bulk degrees of \(a\) and \(b\) decrease by one, and therefore
\[
\begin{aligned}
        S_0(Y)-S_0(Y^{-ab})
        &=
        \frac1n\Big[
        b_a(b_a-1)-(b_a-1)(b_a-2)       \\
        &\qquad\qquad
        +b_b(b_b-1)-(b_b-1)(b_b-2)
        \Big]                                    \\
        &=
        \frac2n(b_a+b_b)-\frac4n                  \\
        &=
        \frac2n(b_a+b_b)+O(n^{-1}).
\end{aligned}
\]
Averaging over bulk edges and using
Lemma~\ref{lem:twostar-cut-consequences} again yields the same single-edge
main term
\[
        4p+O(\eta+n^{-1}).
\]
The nonlinear error from simultaneous deletions is again \(O(1)\).  Since now
\(q-q_*<0\), the same formula holds:
\[
        \mathbb E_\sigma\{S_0(Y^\sigma)-S_0(Y)\}
        =
        4pn(q-q_*)+O(\eta n+1).
\]
Therefore, in all cases,
\[
        S_0(Y)-\mathbb E_\sigma S_0(Y^\sigma)
        =
        -4pn(q-q_*)+O(\eta n+1).
\]

Combining the row part and the bulk part, we obtain
\[
\begin{aligned}
        \mathcal S(X)
        -
        \mathbb E_\sigma \mathcal S(X^\sigma)
        &=
        n(q^2-q_*^2)
        +2pn(q-q_*)
        -4pn(q-q_*)      
        +O(\eta n+1)      \\
        &=
        n(q^2-q_*^2)-2pn(q-q_*)
        +O(\eta n+1).
\end{aligned}
\]
Since
\(
        q_*={d_*}/{n}=p+O(n^{-1}),
\)
we have
\[
        q^2-q_*^2-2p(q-q_*)
        =
        (q-p)^2-(q_*-p)^2
        =
        (q-p)^2+O(n^{-2}).
\]
Hence
\[
        \mathcal S(X)
        -
        \mathbb E_\sigma \mathcal S(X^\sigma)
        =
        n(q-p)^2+O(\eta n+1).
\]
Multiplying by \(\beta\) gives
\[
        \mathcal H_\beta(X)
        -
        \mathbb E_\sigma\mathcal H_\beta(X^\sigma)
        =
        \beta n(q-p)^2+O(\eta n+1),
\]
as claimed.
\end{proof}

%We next compute the combinatorial multiplicity ratio between the degree slice \(d\) and the reference degree slice \(d_*\).

%Lemma~\ref{lem:twostar-energy} determines the energetic cost of the cavity map.  
We next compute the entropic cost of the cavity map by comparing the numbers of forward
and reverse switchings between the degree-\(d\) and degree-\(d_*\) slices.

\begin{lemma}\label{lem:twostar-slice-ratio}
Let \(d\in\{0,\ldots,n-1\}\) be feasible, meaning
\(
        0\le k-d\le N_{Bulk}.
\)
Let \(F_d\) be the number of forward switchings from degree \(d\) to degree
\(d_*\), and let \(R_d\) be the maximal number of reverse switchings from
degree \(d_*\) back to degree \(d\).  Then
\[
        \frac{R_d}{F_d}
        =
        \frac{\binom{n-1}{d}\binom{N_{Bulk}}{k-d}}
             {\binom{n-1}{d_*}\binom{N_{Bulk}}{k-d_*}}.
\]
Moreover, uniformly over all feasible \(d\) and all standing
\((p,\beta)\in K\),
\[
        \log\frac{R_d}{F_d}
        =
        -nD\left(\frac dn\middle\|p\right)+O_K(\log n).
\]
\end{lemma}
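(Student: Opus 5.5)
The plan is a double count of the switching graph, followed by Stirling's formula applied to the resulting ratio of binomial coefficients.

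\textbf{Exact ratio.} Take the bipartite graph between the full slices $\{X\in\Omega_k:\deg_X(u)=d\}$ and $\{X\in\Omega_k:\deg_X(u)=d_*\}$, joining $X$ to $X'$ when $X'$ is a single-vertex fixed-edge switching of $X$. I will treat the case $d>d_*$; the case $d<d_*$ is symmetric, and $d=d_*$ is trivial.

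Put $L=d-d_*$. A configuration of degree $d$ at $u$ is determined by its row (a $d$-subset of $Bulk$) and its bulk graph (a $(k-d)$-subset of the $N_{Bulk}$ bulk pairs), so the slice has $\binom{n-1}{d}\binom{N_{Bulk}}{k-d}$ elements. A forward switching deletes $L$ of the $d$ row edges and adds $L$ of the $N_{Bulk}-(k-d)$ bulk nonedges. Every $X$ in the slice therefore has exactly
\[
F_d=\binom{d}{L}\binom{N_{Bulk}-k+d}{L}
\]
forward switchings. Every $X'$ of degree $d_*$ has exactly
\[
R_d=\binom{n-1-d_*}{L}\binom{k-d_*}{L}
\]
reverse switchings: add $L$ row nonedges and delete $L$ of its $k-d_*$ bulk edges. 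Both counts are independent of the configuration, so $R_d$ is also the maximal reverse count. Double counting the edges of the bipartite graph gives
\[
\binom{n-1}{d}\binom{N_{Bulk}}{k-d}F_d=\binom{n-1}{d_*}\binom{N_{Bulk}}{k-d_*}R_d ,
\]
which is the claimed identity. It can also be checked directly from the binomial formulas.

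\textbf{Asymptotics.} Split the logarithm of the ratio into a row factor and a bulk factor.

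For the row factor, Stirling's formula with an $O(\log n)$ error gives
\[
\log\binom{n-1}{d}=(n-1)h\bigl(d/(n-1)\bigr)+O(\log n),
\]
uniformly in $0\le d\le n-1$. Since $d_*/(n-1)=p+O(1/n)$ and $h$ is smooth near $p\in[\kappa,1-\kappa]$, the row factor equals $n\bigl[h(q)-h(p)\bigr]+O(\log n)$ with $q=d/n$. Here I use that $h$ is uniformly continuous with $|h'|$ bounded only logarithmically near the endpoints, which the $O(\log n)$ error absorbs. The replacement of $n-1$ by $n$ costs $O(\log n)$ for the same reason.

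For the bulk factor, $\log\binom{N_{Bulk}}{k-j}$ with $|j|\le n$ is a perturbation of order $n$ relative to $N_{Bulk}\asymp n^2$. I will write it as a telescoping product,
\[
\frac{\binom{N_{Bulk}}{k-d}}{\binom{N_{Bulk}}{k-d_*}}=\prod_{j=d_*+1}^{d}\frac{k-j+1}{N_{Bulk}-k+j}.
\]
Each factor equals $\frac{p}{1-p}\bigl(1+O(1/n)\bigr)$, because $k=Np$, $N_{Bulk}=N-(n-1)$ and $j\le n$. Hence the bulk log-ratio is $(d-d_*)\log\frac{p}{1-p}+O(1)=n(q-p)\log\frac{p}{1-p}+O(1)$.

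\textbf{Identification with the divergence.} Summing the two factors gives
\[
n\Bigl[h(q)-h(p)+(q-p)\log\tfrac{p}{1-p}\Bigr]+O(\log n),
\]
and the bracket is identically $-D(q\|p)$. The main obstacle is the uniformity over all feasible $d$, including $d$ near $0$ or $n-1$ where $h$ is not Lipschitz. I will handle it by using exact Stirling bounds
\[
\log\binom{m}{j}=mh(j/m)-\tfrac12\log j+O(\log m)
\]
valid for all $0\le j\le m$, rather than Taylor expansion in $q$. The feasibility condition $0\le k-d\le N_{Bulk}$ keeps the bulk factors well defined, and since $p$ is bounded away from $0$ and $1$ on $K$, every factor in the telescoping product stays bounded away from $0$ and $\infty$.
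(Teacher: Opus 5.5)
Your proof is correct and follows the paper's outline. Both split the ratio into a row factor and a bulk factor, apply uniform Stirling to the row factor, and recognize $h(q)-h(p)+(q-p)\log\frac{p}{1-p}=-D(q\|p)$. Two steps are argued differently.

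For the exact identity, the paper computes $F_d$ and $R_d$ and then checks two binomial identities: $\binom{n-1-d_*}{L}/\binom{d}{L}=\binom{n-1}{d}/\binom{n-1}{d_*}$ and $\binom{k-d_*}{L}/\binom{N_{Bulk}-(k-d)}{L}=\binom{N_{Bulk}}{k-d}/\binom{N_{Bulk}}{k-d_*}$. Your double count of the biregular switching graph gives the same formula without this algebra. It also shows why it holds: $R_d/F_d$ is simply the ratio of the two slice cardinalities.

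For the bulk factor, the paper uses Stirling plus a second-order Taylor expansion of $N_{Bulk}h(p+s/N_{Bulk})$. That relies on $s^2/N_{Bulk}=O(1)$ and accepts an $O(\log n)$ error. Your telescoping product of consecutive binomial ratios, each equal to $\frac{p}{1-p}(1+O(1/n))$ uniformly on $K$, is more elementary and gives an $O(1)$ error. The improvement is immaterial here, since the row factor already costs $O(\log n)$.

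One small slip. The formula $\log\binom{m}{j}=mh(j/m)-\tfrac12\log j+O(\log m)$ in your last paragraph is undefined at $j=0$, and the $-\tfrac12\log j$ term is redundant anyway. The bounds $\frac{1}{m+1}e^{mh(j/m)}\le\binom{m}{j}\le e^{mh(j/m)}$ hold for all $0\le j\le m$, endpoints included, and give the uniform row estimate you need directly.
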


\begin{proof}
Suppose first that \(d>d_*\), and put \(L=d-d_*\).  From degree \(d\) to degree
\(d_*\), one deletes \(L\) row edges incident to \(u\) and adds \(L\) bulk
nonedges.  Therefore
\[
        F_d
        =
        \binom dL
        \binom{N_{Bulk}-(k-d)}L.
\]
Conversely, from degree \(d_*\) back to degree \(d\), one adds \(L\) row
nonedges incident to \(u\) and deletes \(L\) bulk edges.  Thus
\[
        R_d
        =
        \binom{n-1-d_*}{L}
        \binom{k-d_*}{L}.
\]
Using the identities
\[
        \frac{\binom{n-1-d_*}{L}}{\binom dL}
        =
        \frac{\binom{n-1}{d}}{\binom{n-1}{d_*}}
\]
and
\[
        \frac{\binom{k-d_*}{L}}
             {\binom{N_{Bulk}-(k-d)}L}
        =
        \frac{\binom{N_{Bulk}}{k-d}}
             {\binom{N_{Bulk}}{k-d_*}},
\]
we obtain the desired product formula.  The case \(d<d_*\) is identical, with
additions and deletions interchanged.  The case \(d=d_*\) is trivial.

It remains to estimate this ratio.  Recall
\(
        M=n-1, q=d/n.
\)
All Stirling and Taylor remainders below are uniform because
\(K\) is a compact subset of \(\mathscr R_{\mathrm{RS}}\).
By uniform Stirling's formula,
\[
        \log\binom M d
        =
        Mh\left(\frac dM\right)+O_K(\log n),
\]
where
\(
        h(x):=-x\log x-(1-x)\log(1-x).
\)
Uniformly in \(d\),
\[
        Mh\left(\frac dM\right)
        =
        nh(q)+O_K(\log n),
\]
and since \(d_*=\lfloor p(n-1)\rfloor\),
\[
        Mh\left(\frac{d_*}{M}\right)
        =
        nh(p)+O_K(1).
\]
Therefore
\[
        \log\binom M d-\log\binom M{d_*}
        =
        n\{h(q)-h(p)\}+O_K(\log n).
\]

Note
\(
        k-d
        =
        p\binom n2-d
        =
        pN_{Bulk}+s_d,\) where 
        \(s_d=p(n-1)-d.
\)
For feasible \(d\), one has \(|s_d|=O_K(n)\).  Hence Stirling's formula and a
Taylor expansion around \(p\) give, uniformly for \(|s|=O_K(n)\),
\[
        \log\binom{N_{Bulk}}{pN_{Bulk}+s}
        =
        N_{Bulk} h(p)+s h'(p)+O_K(\log n),
\]
because \(s^2/N_{Bulk}=O(1)\).  Since
\(
        h'(p)=\log\frac{1-p}{p},
\)
and
\[
        s_d-s_{d_*}=d_*-d=-n(q-p)+O_K(1),
\]
we obtain
\[
        \log\binom{N_{Bulk}}{k-d}
        -
        \log\binom{N_{Bulk}}{k-d_*}
        =
        -n(q-p)\log\frac{1-p}{p}
        +O_K(\log n).
\]
Combining the row and bulk terms,
\[
\begin{aligned}
        \log\frac{R_d}{F_d}
        &=
        n\{h(q)-h(p)\}
        -
        n(q-p)\log\frac{1-p}{p}
        +
        O_K(\log n)       =
        -nD(q\|p)+O_K(\log n).
\end{aligned}
\]
This completes the proof.
\end{proof}

%Now we combine the energy estimate and the multiplicity estimate.

The energy estimate contributes
\(\beta n(q-p)^2\), whereas the switching multiplicity contributes
\(-nD(q\|p)\).  Combining them gives the following comparison between the
Gibbs weights of an atypical degree slice and the reference slice.

\begin{lemma}\label{lem:twostar-switching-ratio}
For every fixed \(\varepsilon>0\), there exist
\(\eta_0=\eta_0(K,\varepsilon)>0\) and \(C=C(K,\varepsilon)<\infty\) such that,
for every fixed \(0<\eta<\eta_0\), there exists
\(n_0=n_0(K,\varepsilon,\eta)<\infty\) such that, uniformly over
\((p,\beta)\in K\), for every  \(n\ge n_0\), if
\(
        \left|d/n-p\right|\ge\varepsilon,
\)
then
\[
        \mathcal Z^{\eta/2,k}_{u,d}
        \le
        \exp\left\{
        -n\left[
        D\left(\frac dn\middle\|p\right)
        -
        \beta\left(\frac dn-p\right)^2
        \right]
        +C\eta n+C\log n
        \right\}
        \mathcal Z^{\eta,k}_{u,d_*}.
\]
\end{lemma}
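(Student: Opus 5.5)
We will prove this by a weighted double-counting argument over the bipartite switching graph between the slice $\mathcal A^{\eta/2,k}_{u,d}$ and the reference slice at degree $d_*$. Let $\Sigma$ be the set of pairs $(X,X^\sigma)$ with $X\in\mathcal A^{\eta/2,k}_{u,d}$ and $X^\sigma$ a single-vertex fixed-edge switching of $X$ from $d$ to $d_*$ at $u$.

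The first step is to check that every image lies in the larger ball. A switching changes at most $2|d-d_*|\le 2n$ edge indicators, so it moves the cut norm by $O(n^{-1})$. Hence for $n\ge n_0(\eta)$ every image satisfies $X^\sigma\in\square^k_\eta(p)$, and therefore $X^\sigma\in\mathcal A^{\eta,k}_{u,d_*}$.

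Fix $X$ in the degree-$d$ slice. Each such $X$ has exactly $F_d$ forward switchings, which depends only on $d$ because the counts in \cref{lem:twostar-slice-ratio} depend only on $d,k,n$. By Jensen's inequality (convexity of $\exp$) and \cref{lem:twostar-energy},
\[
\frac1{F_d}\sum_{\sigma}e^{\mathcal H_\beta(X^\sigma)}\ge \exp\{\mathbb E_\sigma\mathcal H_\beta(X^\sigma)\}
= \exp\{\mathcal H_\beta(X)-\beta n(q-p)^2+O(\eta n+1)\}.
\]
Rearranging gives
\[
e^{\mathcal H_\beta(X)}\le e^{\beta n(q-p)^2+C\eta n+C}\,\frac1{F_d}\sum_\sigma e^{\mathcal H_\beta(X^\sigma)}.
\]

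Summing this over $X\in\mathcal A^{\eta/2,k}_{u,d}$ turns the right-hand side into a sum over $\Sigma$. Regroup that sum by the image $Y=X^\sigma\in\mathcal A^{\eta,k}_{u,d_*}$. Each $Y$ is the image of at most $R_d$ pairs, since preimages correspond injectively to reverse switchings from $Y$. This yields
\[
\mathcal Z^{\eta/2,k}_{u,d}\le e^{\beta n(q-p)^2+C\eta n+C}\,\frac{R_d}{F_d}\,\mathcal Z^{\eta,k}_{u,d_*}.
\]
We then insert the estimate $\log(R_d/F_d)=-nD(q\|p)+O_K(\log n)$ from \cref{lem:twostar-slice-ratio}. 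This requires $d$ to be feasible, which holds whenever the slice is nonempty; if the slice is empty the claim is trivial. The result is exactly the stated bound.

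The main point needing care is the use of \cref{lem:twostar-energy}. That lemma only controls the average of $\mathcal H_\beta(X^\sigma)$, not each individual image. The Jensen step handles this, because it bounds the average of the exponential from below by the exponential of the average, which is the direction needed.

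Two further points must be checked. First, the $O(\eta n+1)$ constant in \cref{lem:twostar-energy} must be uniform in $d$, including $d$ far from $np$. Second, $q=d/n$ must be used consistently in both lemmas. The hypothesis $|d/n-p|\ge\varepsilon$ plays no role in the inequality itself; it only matters afterwards, where positivity of $D(q\|p)-\beta(q-p)^2\ge(c_p-\beta)\varepsilon^2$ dominates $C\eta n$ for $\eta<\eta_0(K,\varepsilon)$.
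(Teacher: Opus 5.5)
Your proposal is correct and follows the paper's proof essentially step for step. The \(O(n^{-1})\) cut-norm perturbation puts every switching image in \(\mathcal A^{\eta,k}_{u,d_*}\); Jensen plus \cref{lem:twostar-energy} bounds \(F_d e^{\mathcal H_\beta(X)}\) by the sum over images; regrouping by image with at most \(R_d\) preimage pairs gives the factor \(R_d/F_d\); and \cref{lem:twostar-slice-ratio} finishes, with the empty-slice case covering infeasible \(d\) exactly as you note.
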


\begin{proof}
If \(\mathcal A^{\eta/2,k}_{u,d}=\varnothing\), the statement is trivial.
Thus assume that this slice is nonempty.  Fix
\(X\in\mathcal A^{\eta/2,k}_{u,d}\).  Let \(X^\sigma\) be one of the \(F_d\)
forward switchings from degree \(d\) to degree \(d_*\).

Since \(X^\sigma\) is obtained from \(X\) by changing \(2|d-d_*|\le 2n\)
edges, for every cut \(S,T\subset[n]\),
\[
        \left|
        \sum_{i\in S,j\in T}
        (X^\sigma_{ij}-X_{ij})
        \right|
        \le Cn.
\]
Therefore
\ben{\label{eq:cutchange}
        \|X^\sigma-X\|_{\square}
        \le \frac{C}{n}.
}
Since \(X\in\square_{\eta/2}^k(p)\), it follows that
\(
        X^\sigma\in\square_{\eta/2+C/n}^k(p)\subset \square_\eta^k(p)
\)
for all sufficiently large \(n\).  Hence we have
\(
        X^\sigma\in \mathcal A^{\eta,k}_{u,d_*}.
\)

By Jensen's inequality,
\[
        \frac1{F_d}\sum_{\sigma}e^{\mathcal H_\beta(X^\sigma)}
        \ge
        \exp\left\{
        \mathbb E_\sigma\mathcal H_\beta(X^\sigma)
        \right\},
\]
and by Lemma~\ref{lem:twostar-energy},
\[
        \mathbb E_\sigma\mathcal H_\beta(X^\sigma)
        \ge
        \mathcal H_\beta(X)
        -
        \beta n\left(\frac dn-p\right)^2
        -
        C_K\eta n-C_K.
\]
Hence
\[
        \frac1{F_d}\sum_{\sigma}e^{\mathcal H_\beta(X^\sigma)}
        \ge
        \exp\left\{
        \mathcal H_\beta(X)
        -
        \beta n\left(\frac dn-p\right)^2
        -
        C_K\eta n-C_K
        \right\}.
\]
Equivalently,
\[
        F_d e^{\mathcal H_\beta(X)}
        \le
        \exp\left\{
        \beta n\left(\frac dn-p\right)^2
        +
        C_K\eta n+C_K
        \right\}
        \sum_{\sigma}e^{\mathcal H_\beta(X^\sigma)}.
\]

We now sum this inequality over all
\(X\in\mathcal A^{\eta/2,k}_{u,d}\).  The left-hand side becomes
\(
        F_d\mathcal Z^{\eta/2,k}_{u,d}.
\)
The right-hand side contains the double sum
\[
        \sum_{X\in\mathcal A^{\eta/2,k}_{u,d}}
        \sum_{\sigma}e^{\mathcal H_\beta(X^\sigma)}.
\]
We regroup this double sum according to the image \(X'=X^\sigma\).  For
\(X'\in\mathcal A^{\eta,k}_{u,d_*}\), let \(N(X')\) be the number of
preimage-switching pairs \((X,\sigma)\) such that \(X^\sigma=X'\).  By the
definition of \(R_d\), every \(X'\) has at most \(R_d\) such preimage-switching
pairs.  Thus
\[
\begin{aligned}
        \sum_{X\in\mathcal A^{\eta/2,k}_{u,d}}
        \sum_{\sigma}e^{\mathcal H_\beta(X^\sigma)}
        &=
        \sum_{X'\in\mathcal A^{\eta,k}_{u,d_*}}
        N(X')e^{\mathcal H_\beta(X')}         \le
        R_d
        \sum_{X'\in\mathcal A^{\eta,k}_{u,d_*}}
        e^{\mathcal H_\beta(X')}               =
        R_d\mathcal Z^{\eta,k}_{u,d_*}.
\end{aligned}
\]
Therefore
\[
        F_d\mathcal Z^{\eta/2,k}_{u,d}
        \le
        \exp\left\{
        \beta n\left(\frac dn-p\right)^2
        +
        C_K\eta n+C_K
        \right\}
        R_d\mathcal Z^{\eta,k}_{u,d_*}.
\]
Dividing by \(F_d\), we get
\[
        \mathcal Z^{\eta/2,k}_{u,d}
        \le
        \frac{R_d}{F_d}
        \exp\left\{
        \beta n\left(\frac dn-p\right)^2
        +
        C_K\eta n+C_K
        \right\}
        \mathcal Z^{\eta,k}_{u,d_*}.
\]
Finally, Lemma~\ref{lem:twostar-slice-ratio} gives
\[
        \frac{R_d}{F_d}
        =
        \exp\left\{
        -nD\left(\frac dn\middle\|p\right)
        +O_K(\log n)
        \right\}.
\]
Substituting this estimate proves the claim.
\end{proof}

%We can now prove the uniform degree concentration estimate in the two star case.

Lemma~\ref{lem:twostar-switching-ratio} shows that a macroscopic degree
deviation is exponentially suppressed whenever
\(D(q\|p)-\beta(q-p)^2>0\).  Summing this estimate over all bad degree
slices, and using Lemma~\ref{twostar-shell} for the outer shell, yields
uniform degree concentration. Now we turn to completing the proof of \cref{le-1}.
\begin{proof}[Proof of \cref{le-1}]
All choices below are uniform over \((p,\beta)\in K\). Fix \(\varepsilon>0\), and set
\[
        \kappa_{K,\varepsilon}
        =
        \inf_{\substack{(p,\beta)\in K\\
        q\in[0,1],\ |q-p|\ge\varepsilon}}
        \{D(q\|p)-\beta(q-p)^2\}.
\]
By the definition of \(c_{p}\) and compactness of \(K\),
\(
        \kappa_{K,\varepsilon}>0.
\)
Write \(\kappa=\kappa_{K,\varepsilon}\).
Let \(C\) be the constant appearing in
Lemma~\ref{lem:twostar-switching-ratio}, which depends only on \(K\) and
\(\varepsilon\).  Choose \(\eta_0=\eta_0(K,\varepsilon)>0\) sufficiently small
so that
\(
        C\eta_0\le {\kappa}/{4}.
\)
Fix \(0<\eta<\eta_0\).  We then choose
\(n_0=n_0(K,\varepsilon,\eta)\) sufficiently large so that, for all
\(n\ge n_0\),
\(
        C\log n\le \frac{\kappa n}{4},
\)
the shell estimate in \cref{twostar-shell} holds, and
\[
        e^{-c_{K,\eta}n^2}\le e^{-\kappa n/4}.
\]

Fix a vertex \(u\).  Define
\[
        \mathcal D_\varepsilon
        :=
        \left\{
        d\in\{0,\ldots,n-1\}:
        \left|\frac dn-p\right|>\varepsilon
        \right\}.
\]
Inside \(\square_\eta^k(p)\), we split the bad event into the outer shell and
the interior degree slices:
\[
\begin{aligned}
        \{|p_u(X)-p|>\varepsilon\}\cap \square_\eta^k(p)
        \subseteq
        \bigl(\square_\eta^k(p)\setminus \square_{\eta/2}^k(p)\bigr)
        \cup
        \bigcup_{d\in\mathcal D_\varepsilon}
        \mathcal A^{\eta/2,k}_{u,d}.
\end{aligned}
\]
Therefore,
\[
\begin{aligned}
        \mathcal Z\bigl(
        \{|p_u(X)-p|>\varepsilon\}\cap \square_\eta^k(p)
        \bigr)
        &\le
        \mathcal Z\bigl(\square_\eta^k(p)\setminus \square_{\eta/2}^k(p)\bigr)
        \\
        &\quad+
        \sum_{d\in\mathcal D_\varepsilon}
        \mathcal Z^{\eta/2,k}_{u,d}.
\end{aligned}
\]

The shell term is bounded by \cref{twostar-shell}:
\[
        \mathcal Z\bigl(\square_\eta^k(p)\setminus \square_{\eta/2}^k(p)\bigr)
        \le
        e^{-c_{K,\eta}n^2}
        \mathcal Z\bigl(\square_\eta^k(p)\bigr)
        \le
        e^{-\kappa n/4}
        \mathcal Z\bigl(\square_\eta^k(p)\bigr).
\]

For the interior slices, Lemma~\ref{lem:twostar-switching-ratio} gives, for
every \(d\in\mathcal D_\varepsilon\),
\[
\begin{aligned}
        \mathcal Z^{\eta/2,k}_{u,d}
        &\le
        \exp\left\{
        -n\left[
        D\left(\frac dn\middle\|p\right)
        -
        \beta\left(\frac dn-p\right)^2
        \right]
        +C\eta n+C\log n
        \right\}
        \mathcal Z^{\eta,k}_{u,d_*}  \\
        &\le
        \exp\{-\kappa n+C\eta n+C\log n\}
        \mathcal Z\bigl(\square_\eta^k(p)\bigr).
\end{aligned}
\]
By the choices of \(\eta_0\) and \(n_0\),
\(
        -\kappa n+C\eta n+C\log n
        \le
        -{\kappa n}/{2}.
\)
Hence
\[
        \mathcal Z^{\eta/2,k}_{u,d}
        \le
        e^{-\kappa n/2}
        \mathcal Z\bigl(\square_\eta^k(p)\bigr).
\]
Since there are at most \(n\) possible values of \(d\),
\[
        \sum_{d\in\mathcal D_\varepsilon}
        \mathcal Z^{\eta/2,k}_{u,d}
        \le
        n e^{-\kappa n/2}
        \mathcal Z\bigl(\square_\eta^k(p)\bigr)
        \le
        e^{-\kappa n/3}
        \mathcal Z\bigl(\square_\eta^k(p)\bigr)
\]
for all sufficiently large \(n\).

Combining the shell and slice estimates, we obtain
\[
        \mu_\beta^k\left(
        |p_u(X)-p|>\varepsilon
        \,\middle|\,
        \square_\eta^k(p)
        \right)
        \le
        e^{-\kappa n/4}+e^{-\kappa n/3}
        \le
        e^{-\kappa n/5}.
\]
Finally, taking a union bound over \(u=1,\ldots,n\), we get
\[
\begin{aligned}
        \mu_\beta^k\left(
        \sup_{u\in[n]}|p_u(X)-p|>\varepsilon
        \,\middle|\,
        \square_\eta^k(p)
        \right)
        &\le
        n e^{-\kappa n/5}
        \le
        e^{-cn}
\end{aligned}
\]
for some \(c=c(K,\varepsilon)>0\), after increasing \(n_0\) if necessary. Applying the preceding estimate with $\varepsilon/2$ in place of
$\varepsilon$, and using
$\{\sup_{u\in [n]}|p_u(X)-p|<\varepsilon/2\}\subset \Gamma_{p,\varepsilon}$ proves \cref{le-1}.
%$$r(\{u,v\},X)=(\deg_X(u)+\deg_X(v)-2X_{uv})/(2n),$$
%we obtain $X\in\Gamma_{p,\varepsilon}$ for all sufficiently large $n$, which completes the proof.
\end{proof}

\section{A counterexample for the necessity of metastable mixing}\label{sec:GPT1}

Throughout this section, \((p,\beta)\) is fixed, and \(n\) ranges over
admissible values for which \(k=Np\in\mathbb Z\).  
The replica-symmetric estimates used below are applied with the compact
singleton \(K=\{(p,\beta)\}\subset\mathscr R_{\mathrm{RS}}\).

This appendix shows that the restriction on the initial state in
\cref{coupling} cannot in general be removed.  More precisely, even in the
conditional replica-symmetric region and under the contraction condition
\(4\beta p(1-p)<1/2\), a single vertex with anomalously large degree may create
a set of exponentially small conductance.  Consequently, there are initial
configurations outside the regular region whose hitting time of
\(\Gamma_{p,\varepsilon}\), and hence whose total-variation mixing time, is
exponential in \(n\).

We retain the notation
\(\mathcal A_{u,d}^{\eta,k}\), \(\mathcal Z_{u,d}^{\eta,k}\), and
\(\mathcal Z(\cdot)\) from Appendix~\ref{sec:degree-to-local}.    Define the one-row effective potential
\[
        V(q):=\Dkl(q\|p)-\beta(q-p)^2,
        \qquad q\in[0,1].
\]
By the definition of \(c_{p}\), the condition \(\beta<c_{p}\) implies
\(
        V(q)>V(p)=0, q\ne p.
\)

\begin{proposition}
\label{prop:one-vertex-defect}
Assume \(\beta<c_{p}\).  Suppose that there exist
\(
        p<b<q<1
\)
such that
\(
        \Delta:=V(b)-V(q)>0.
\)
Then, for every sufficiently small fixed \(\varepsilon>0\) satisfying
\(
        p+3\varepsilon<b,
\)
there exist constants \(c>0\), \(C<\infty\), and \(n_0<\infty\) such that,
for every admissible \(n\ge n_0\), there is a deterministic
\(x_n\in\Omega_k\setminus\Gamma_{p,\varepsilon}\) for which
\[
        \mathbb P_{x_n}\left(
        \tau_{\Gamma_{p,\varepsilon}}\le e^{cn}
        \right)
        \le e^{-cn},
\]
where
\(
        \tau_{\Gamma_{p,\varepsilon}}
        =\inf\{t\ge0:X_t\in\Gamma_{p,\varepsilon}\}.
\)
Moreover, after decreasing \(c\) if necessary,
\[
        \mathbb E_{x_n}\tau_{\Gamma_{p,\varepsilon}}
        \ge \frac12 e^{cn},
\]
and, for every integer \(0\le t\le e^{cn}\),
\[
        d_{\mathrm{TV}}\bigl(x_nP^t,\mu_\beta^k\bigr)
        \ge 1-Ce^{-cn}.
\]
In particular, the worst total variation mixing time of the full
Kawasaki chain is exponential in \(n\).
\end{proposition}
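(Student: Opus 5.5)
We will build a bottleneck from the one-row effective potential $V$ of Appendix~\ref{sec:degree-to-local}. Fix a vertex $u$ and look at the normalized degree $p_u(X)=\deg_X(u)/n$. Define the set
\[
\mathcal S:=\{X\in\Omega_k:\ p_u(X)\ge b\},
\]
and the thin boundary layer $\partial\mathcal S:=\{X: bn\le \deg_X(u)< bn+2\}$. A single Kawasaki exchange changes $\deg_X(u)$ by at most one, so any path from a configuration with $p_u\ge q$ to $\Gamma_{p,\varepsilon}$ must pass through $\partial\mathcal S$. This holds because on $\Gamma_{p,\varepsilon}$ every $r(e,X)\le p+\varepsilon$. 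Since $r(\{u,v\},X)=(\deg u+\deg v-2X_{uv})/(2n)$ and the average over $v$ of $\deg v$ is about $pn$, this forces $p_u\le p+2\varepsilon+o(1)<b$. The initial state $x_n$ will be a \emph{typical} configuration of the slice $\mathcal T:=\{p_u\in[q-\xi,q+\xi]\}$, in the sense described below.

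The main quantitative step is a comparison of Gibbs weights of degree slices, which is a two-sided version of Lemma~\ref{lem:twostar-switching-ratio}. We need the upper bound $\mathcal Z(\{\deg u=bn\}\cap\square_\eta^k)\lesssim e^{-nV(b)+C\eta n+C\log n}\mathcal Z^{\eta,k}_{u,d_*}$. This already follows from that lemma, applied through the shell Lemma~\ref{twostar-shell}, and Proposition~\ref{theo of appro} shows that the part outside $\square_\eta^k$ is negligible. We also need the matching lower bound $\mathcal Z(\mathcal T\cap\square_\eta^k)\gtrsim e^{-nV(q)-C\eta n-C\log n}\mathcal Z^{\eta,k}_{u,d_*}$. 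For this we run the switching argument in reverse, from $d_*$ to $d\approx qn$: apply Jensen to the average over switchings, use Lemma~\ref{lem:twostar-energy} with the roles of the slices swapped, and count with Lemma~\ref{lem:twostar-slice-ratio}. The remaining cut-norm check is that the configurations remain in $\square_\eta^k$, up to an $O(1/n)$ perturbation. Combining the two bounds with $\Delta=V(b)-V(q)>0$ and $\eta$ small gives
\[
\mu(\partial\mathcal S)\le e^{-\Delta n/2}\,\mu(\mathcal T).
\]
We will need to handle the contribution of $\partial\mathcal S$ from outside $\square_\eta^k$. Its mass is at most $e^{-cn^2}$, and since $\mu(\mathcal T)\ge e^{-Cn}$ it is negligible relative to $\mu(\mathcal T)$. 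We expect this two-sided cavity estimate, especially the lower bound for $\mathcal T$ and the uniformity of the error terms, to be the main obstacle.

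With the bottleneck in hand, the conclusion follows from a standard stationarity argument. Start the chain from $\nu:=\mu(\cdot\mid \mathcal T')$, where $\mathcal T'$ is the connected region $\{p_u\ge b\}$ restricted toward $\mathcal T$. Since $\mu$ is stationary, $\mathbb P_\nu(X_t\in\partial\mathcal S)\le \mu(\partial\mathcal S)/\mu(\mathcal T')$ for each $t$. A union bound over $t\le e^{cn}$ with $c<\Delta/4$ then gives $\mathbb P_\nu(\tau_{\partial\mathcal S}\le e^{cn})\le e^{-cn}$. By Markov's inequality, at least half of the $\nu$-mass consists of deterministic states $x_n\in\mathcal T$ with the same bound, up to a factor of $2$. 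We take $x_n$ to be such a state. These satisfy $x_n\notin\Gamma_{p,\varepsilon}$ because $p_u\ge q-\xi>p+3\varepsilon$. The lower bound on the expected hitting time follows immediately. For the total variation bound, note that on the event $\{\tau>t\}$ we have $X_t\in\{p_u\ge b\}$, while by Lemma~\ref{le-1} and Proposition~\ref{theo of appro} we have $\mu(p_u\ge b)\le Ce^{-cn}$. Hence $d_{\mathrm{TV}}(x_nP^t,\mu)\ge 1-e^{-cn}-Ce^{-cn}$.
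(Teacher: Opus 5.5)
Your proposal is correct and follows essentially the paper's route. Both arguments use the set $\{\deg_X(u)\ge d_b\}$ separated from $\Gamma_{p,\varepsilon}$, a cavity-switching comparison giving the boundary slice at most $e^{-n\Delta+C\eta n+C\log n}$ times the weight of the slice $\deg_X(u)=\lfloor qn\rfloor$, a stationarity-based exit bound, averaging to choose $x_n$, and the same total-variation estimate; the exit bound is phrased in the paper via the conductance $\Phi(A_n)$ and a killed-chain domination, and in your proposal via $\nu P^t\le\mu_\beta^k/\mu_\beta^k(\mathcal T')$ plus a union bound. The one step you assert without proof is $\mu_\beta^k(\mathcal T)\ge e^{-Cn}$, which needs an absolute lower bound on $\mathcal Z^{\eta,k}_{u,d_*}/\mathcal Z(\Omega_k)$; the paper obtains this in Lemma~\ref{lem:fixed-slice-lower-bound} by summing the slice comparisons over all $d$, using $V\ge 0$ and $\mu_\beta^k(\square^k_{\eta/4}(p))\ge\frac12$. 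Also note that Markov's inequality only places $x_n$ in $\mathcal T'$, not necessarily in $\mathcal T$, which suffices since $\mathcal T'\subseteq\{p_u\ge b\}$.
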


The proof rests on a two-slice version of the switching comparison established
in Appendix~\ref{sec:degree-to-local}.  The point is that the same
calculation as in Appendix~\ref{sec:degree-to-local} compares two arbitrary macroscopic degrees, rather than only an
atypical degree with the reference degree near \(pn\).
The same arguments leading to \cref{lem:twostar-switching-ratio} gives the following result.

\begin{lemma}
\label{lem:two-slice-comparison}
Fix a compact interval \(J\subset(0,1)\).  There is a constant
\(C=C(J,p,\beta)<\infty\) such that the following holds.  Let
\(q\in[0,1]\), \(q'\in J\), and set
\(
        d=\lfloor qn\rfloor,
        d'=\lfloor q'n\rfloor.
\)
For every sufficiently small fixed \(\eta>0\) and all sufficiently large
 \(n\),
\begin{equation}
\label{eq:two-slice-comparison}
        \mathcal Z_{u,d}^{\eta/2,k}
        \le
        \exp\left\{
        -n\bigl[V(q)-V(q')\bigr]
        +C\eta n+C\log n
        \right\}
        \mathcal Z_{u,d'}^{\eta,k}.
\end{equation}
\end{lemma}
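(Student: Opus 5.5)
We will rerun the switching argument behind Lemma~\ref{lem:twostar-switching-ratio}, with the reference degree $d_*$ replaced by the arbitrary target degree $d'=\lfloor q'n\rfloor$, $q'\in J$. Fix $X\in\mathcal A^{\eta/2,k}_{u,d}$ and let $X^\sigma$ range over the $F_{d,d'}$ single-vertex fixed-edge switchings at $u$ from degree $d$ to $d'$. The proof of \cref{eq:cutchange} is unchanged, since a switching alters at most $2n$ entries. Hence $\|X^\sigma-X\|_\square\le C/n$ and $X^\sigma\in\mathcal A^{\eta,k}_{u,d'}$ for large $n$. Jensen's inequality and regrouping by the image $X'=X^\sigma$ then give
\[
\mathcal Z^{\eta/2,k}_{u,d}\le \frac{R_{d,d'}}{F_{d,d'}}\exp\bigl\{\mathcal H_\beta(X)-\mathbb E_\sigma\mathcal H_\beta(X^\sigma)\bigr\}_{\max}\,\mathcal Z^{\eta,k}_{u,d'},
\]
where $R_{d,d'}$ is the maximal number of reverse switchings. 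This reduces the lemma to two-slice analogues of Lemmas~\ref{lem:twostar-energy} and~\ref{lem:twostar-slice-ratio}.

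\textbf{Energy step.} We repeat the proof of Lemma~\ref{lem:twostar-energy} with $q_*$ replaced by $q'_n:=d'/n=q'+O(n^{-1})$. The row part is estimated by the weighted row bulk estimate of Lemma~\ref{lem:twostar-cut-consequences}, applied to both $X$ and $X^\sigma$; it contributes $n(q^2-q'^2)+2pn(q-q')+O(\eta n+1)$. The bulk part is computed from the averaged degree estimates, with the nonlinear overlap term bounded by $O(1)$ exactly as before, since $L=|d-d'|\le n$; it contributes $-4pn(q-q')+O(\eta n+1)$. Together,
\[
\mathcal H_\beta(X)-\mathbb E_\sigma\mathcal H_\beta(X^\sigma)=\beta n\bigl[(q-p)^2-(q'-p)^2\bigr]+O(\eta n+1).
\]
Nothing here uses $q'\approx p$. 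The only inputs are the cut-norm consequences for the bulk graph and the facts $X\in\square^k_{\eta/2}(p)$ and $X^\sigma\in\square^k_\eta(p)$.

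\textbf{Entropy step.} The same counting as in Lemma~\ref{lem:twostar-slice-ratio} gives
\[
\frac{R_{d,d'}}{F_{d,d'}}=\frac{\binom{n-1}{d}\binom{N_{Bulk}}{k-d}}{\binom{n-1}{d'}\binom{N_{Bulk}}{k-d'}}.
\]
Stirling's formula gives $\log\binom{n-1}{d}-\log\binom{n-1}{d'}=n[h(q)-h(q')]+O(\log n)$. A Taylor expansion of $\log\binom{N_{Bulk}}{pN_{Bulk}+s}$, valid since $|s|=O(n)$ and $s^2/N_{Bulk}=O(1)$, contributes $-n(q-q')\log\frac{1-p}{p}+O(\log n)$. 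Summing, and using $D(q\|p)=-h(q)-q\log p-(1-q)\log(1-p)$, we get
\[
\log\frac{R_{d,d'}}{F_{d,d'}}=-n\bigl[D(q\|p)-D(q'\|p)\bigr]+O(\log n).
\]
Combining this with the energy step yields \cref{eq:two-slice-comparison}.

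\textbf{Main obstacle.} The delicate point is uniformity. The Stirling remainder $O(\log n)$ for $\binom{n-1}{d'}$ must be uniform, and this is where $q'\in J\subset(0,1)$ is used: it keeps $d'$ away from $0$ and $n-1$. For $q$ near $0$ or $1$ we only need an upper bound, since $\binom{n-1}{d}\le e^{(n-1)h(d/(n-1))}$ and $|h(d/(n-1))-h(q)|=O(\log n/n)$. One must also check two technical points. First, the floor in $d=\lfloor qn\rfloor$ shifts $D$ and $(q-p)^2$ only by $O(1/n)\cdot n=O(1)$ after multiplication by $n$, uniformly in $q$ (using $|D'|\lesssim\log n$ at distance $1/n$ from the endpoints). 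Second, feasibility $0\le k-d,\,k-d'\le N_{Bulk}$ holds automatically for large $n$. If the slice $\mathcal A^{\eta/2,k}_{u,d}$ is empty, the claim is trivial.
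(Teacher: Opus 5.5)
Your proposal is correct and is essentially the paper's own argument: the paper obtains this lemma by rerunning the proof of Lemma~\ref{lem:twostar-switching-ratio} with $d_*$ replaced by $d'$. That proof has three parts, all of which you reproduce: the two-slice energy identity $\mathcal H_\beta(X)-\mathbb E_\sigma\mathcal H_\beta(X^\sigma)=\beta n[(q-p)^2-(q'-p)^2]+O(\eta n+1)$, the exact ratio $R_{d,d'}/F_{d,d'}$ together with Stirling's formula giving $-n[D(q\|p)-D(q'\|p)]+O(\log n)$, and the Jensen-plus-regrouping step. Two minor corrections: the bounds $\binom{M}{d}\in[(M+1)^{-1}e^{Mh(d/M)},e^{Mh(d/M)}]$ hold uniformly in $d$, so $q'\in J$ is not what makes Stirling uniform (it mainly ensures $d'\le n-1$ so that forward switchings exist), and the floor corrections to $nD$ cost $O(\log n)$ rather than $O(1)$, which is absorbed by the $C\log n$ term.
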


\iffalse
\begin{proof}
For \(X\in\mathcal A_{u,d}^{\eta/2,k}\), let \(X^\sigma\) be chosen uniformly
among the single-vertex fixed-edge switchings which change the degree of \(u\)
from \(d\) to \(d'\).  Repeating the calculation in the proof of
\cref{lem:twostar-energy}, with \(d_*\) replaced by \(d'\), gives
\[
        \mathcal H_\beta(X)
        -\mathbb E_\sigma\mathcal H_\beta(X^\sigma)
        =
        \beta n\left[
        (q-p)^2-(q'-p)^2
        \right]
        +O_{J,p,\beta}(\eta n+1).
\]
Let \(F_{d,d'}\) be the number of such forward switchings and let
\(R_{d,d'}\) be the maximal number of reverse switchings.  The counting
argument in the proof of \cref{lem:twostar-slice-ratio} gives
\[
        \frac{R_{d,d'}}{F_{d,d'}}
        =
        \frac{
        \binom{n-1}{d}\binom{N_{Bulk}}{k-d}
        }{
        \binom{n-1}{d'}\binom{N_{Bulk}}{k-d'}
        },
\]
and the same uniform Stirling estimate yields
\[
        \log\frac{R_{d,d'}}{F_{d,d'}}
        =
        -n\left[
        \Dkl(q\|p)-\Dkl(q'\|p)
        \right]
        +O_J(\log n).
\]
Finally, the Jensen and regrouping argument used in the proof of
\cref{lem:twostar-switching-ratio} applies without change.  Every switching
image lies in \(\square_\eta^k(p)\) for all sufficiently large \(n\), since
at most \(2|d-d'|=O(n)\) edge indicators are changed.  Combining the energy
and multiplicity estimates gives \eqref{eq:two-slice-comparison}.
\end{proof}
\fi

We shall also need a lower bound on the weight of a fixed macroscopic degree
slice.  The estimate is only exponential in \(n\), which is sufficient because
the probability of leaving a fixed cut neighborhood is exponential in
\(n^2\).

\begin{lemma}
\label{lem:fixed-slice-lower-bound}
Assume \(\beta<c_{p}\).  Fix \(q\in(0,1)\) and a sufficiently small fixed
\(\eta>0\), and let \(d_q:=\lfloor qn\rfloor\).  There exists
\(C=C(q,\eta,p,\beta)<\infty\) such that, for all sufficiently large
 \(n\),
\begin{equation}
\label{eq:fixed-slice-lower-bound}
        \frac{\mathcal Z_{u,d_q}^{\eta,k}}
        {\mathcal Z(\Omega_k)}
        \ge e^{-Cn}.
\end{equation}
\end{lemma}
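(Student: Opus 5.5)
We want a lower bound on the weight of a fixed degree slice, exponential in \(n\) only. The plan is to bound \(\mathcal Z_{u,d_q}^{\eta,k}\) below by comparing it with the reference slice \(\mathcal Z_{u,d_*}^{\eta',k}\), using a reverse switching from degree \(d_*\) to degree \(d_q\). The reference slice itself carries a polynomially non-negligible fraction of the total weight.

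\textbf{Step 1 (reference slice).} By \cref{theo of appro}, applied with \(K=\{(p,\beta)\}\), we have
\[
\mathcal Z(\square^k_{\eta/2}(p))\ge \tfrac12\mathcal Z(\Omega_k)
\]
for large \(n\). The set \(\square^k_{\eta/2}(p)\) splits into the \(n\) slices \(\mathcal A^{\eta/2,k}_{u,d}\), so some \(d_0\) satisfies
\[
\mathcal Z^{\eta/2,k}_{u,d_0}\ge \mathcal Z(\Omega_k)/(2n).
\]
It is simplest not to insist on \(d_0=d_*\); any \(d_0\) works, since the subsequent cost estimate is uniform in the degrees involved.

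\textbf{Step 2 (switching \(d_0\to d_q\)).} Fix \(X\in\mathcal A^{\eta/2,k}_{u,d_0}\) and let \(L=|d_q-d_0|\le n\). Apply the single-vertex fixed-edge switchings that move the degree of \(u\) from \(d_0\) to \(d_q\). Each changes at most \(2n\) edge indicators, so by \cref{eq:cutchange} every image lies in \(\mathcal A^{\eta,k}_{u,d_q}\) once \(n\) is large.

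The energy changes crudely: \(\mathcal H_\beta\) is \((2\beta/n)S_2\), and one edge flip changes \(S_2\) by at most \(2n\). Hence each switching changes \(\mathcal H_\beta\) by at most \(8\beta n\) in absolute value. This deterministic bound avoids needing \cref{lem:twostar-energy} at all.

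\textbf{Step 3 (counting).} Let \(F\) be the number of forward switchings from a degree-\(d_0\) configuration, and let \(R\) be the maximal number of reverse switchings into a given degree-\(d_q\) configuration. Each is an explicit product of two binomials depending only on \((n,k,d_0,d_q)\), as in \cref{lem:twostar-slice-ratio}. Double counting pairs \((X,\sigma)\) gives
\[
F\,e^{-8\beta n}\,\mathcal Z^{\eta/2,k}_{u,d_0}\le R\,\mathcal Z^{\eta,k}_{u,d_q}.
\]

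\textbf{Step 4 (the ratio \(F/R\)).} It then suffices to show \(F/R\ge e^{-Cn}\). The product formula gives
\[
\frac{F}{R}=\frac{\binom{n-1}{d_q}\binom{N_{Bulk}}{k-d_q}}{\binom{n-1}{d_0}\binom{N_{Bulk}}{k-d_0}},
\]
which is the inverse of the expression in \cref{lem:twostar-slice-ratio} with the roles of the two degrees exchanged. The Stirling estimates there give \(\log(F/R)=-n[\text{bounded quantity}]+O(\log n)\). This requires \(q\in(0,1)\), so that \(\binom{n-1}{d_q}\ge e^{-Cn}2^{n}\cdots\) is controlled, together with \(|k-d|-pN_{Bulk}=O(n)\). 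Both the entropy term \(h\) and the bulk term \(s h'(p)\) are \(O(n)\) uniformly. Combining these yields \(\mathcal Z^{\eta,k}_{u,d_q}\ge e^{-Cn}\mathcal Z(\Omega_k)/(2n)\), which is the claim.

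The main obstacle is Step 4 when \(F\) or \(R\) vanishes because of feasibility. One needs \(0\le k-d_q\le N_{Bulk}\), and the switching requires enough bulk edges and nonedges. Since \(k\asymp n^2\), the bulk has \(\asymp n^2\) edges and nonedges, and \(L\le n\), all binomials are positive. Each is then bounded within \(e^{O(n)}\) by elementary estimates such as \(\binom{m}{L}\ge (m/L)^L\) and \(\binom{m}{L}\le m^L\). This gives an \(e^{O(n\log n)}\) bound at worst, so care is needed: the ratio \(\binom{A}{L}/\binom{B}{L}\) with \(A,B\asymp n^2\) is \((A/B)^{L}e^{O(L^2/n^2)}=e^{O(n)}\). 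That precise form is what I would verify.
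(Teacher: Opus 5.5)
Your proof is correct, and it takes a more elementary route than the paper.

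The paper's argument has two steps, each using \cref{lem:two-slice-comparison}.
\begin{enumerate}
\item It compares every degree $d$ with the reference degree $d_*$ and uses $V\ge 0$ to show that the specific slice $\mathcal Z^{\eta/2,k}_{u,d_*}$ carries at least $e^{-C_1 n}\mathcal Z(\Omega_k)$.
\item It applies the same lemma from $d_*$ to $d_q$, paying $e^{nV(q)+C\eta n+C\log n}$.
\end{enumerate}

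You replace these as follows.
\begin{enumerate}
\item Pigeonhole over the $n$ slices of $\square^k_{\eta/2}(p)$ gives some $d_0$ with weight at least $\mathcal Z(\Omega_k)/(2n)$. This needs no comparison lemma, and $\beta<c_p$ enters only through \cref{theo of appro}.
\item A single switching $d_0\to d_q$ with the deterministic bound $|\Delta\mathcal H_\beta|\le 8\beta n$ and a crude count of $F/R$ finishes the argument.
\end{enumerate}

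Your version needs neither \cref{lem:twostar-energy} nor any Stirling asymptotics. It does not rely on the two-slice comparison lemma, which the paper only sketches. It is also uniform in $d_0$, in $q\in[0,1)$ and in $\eta$.

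The price is a cruder constant. Yours is of order $\log 2+8\beta+\log\frac{\max(p,1-p)}{\min(p,1-p)}$, whereas the paper gets essentially $V(q)+O(\eta)$, which is close to sharp. This is harmless here, because the lemma is only used in \cref{lem:defect-conductance} against a term of size $e^{-c_\eta n^2}$.

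The final verification you flagged is immediate.
\begin{itemize}
\item Row factor: $\binom{n-1}{d_q}/\binom{n-1}{d_0}\ge 2^{-(n-1)}$, with no condition on $q$ beyond $d_q\le n-1$.
\item Bulk factor: $k-d=pN_{Bulk}+p(n-1)-d$, so all indices $m$ involved satisfy $|m-pN_{Bulk}|\le n$. Each consecutive ratio $(N_{Bulk}-m)/(m+1)$ then lies between two positive constants depending only on $p$, so the bulk factor is at least $e^{-Cn}$.
\item Positivity: $F,R>0$ follows from $k\asymp n^2$, $N_{Bulk}-k\asymp n^2$ and $L\le n-1$.
\end{itemize}
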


\begin{proof}
Let \(d_*:=\lfloor p(n-1)\rfloor\), as in
Appendix~\ref{sec:degree-to-local}.  Since \(\beta<c_{p}\), we have
\(V(r)\ge0\) for every \(r\in[0,1]\).  Applying
\cref{lem:two-slice-comparison} with target degree \(d_*\), and summing over
all \(d\in\{0,\ldots,n-1\}\), gives
\[
        \mathcal Z\bigl(\square_{\eta/4}^k(p)\bigr)
        \le
        n\exp\{C\eta n+C\log n\}
        \mathcal Z_{u,d_*}^{\eta/2,k}.
\]
By \cref{prop:1,theo of appro},
\(
        \mu_\beta^k\bigl(\square_{\eta/4}^k(p)\bigr)
        \ge \frac12
\)
for all sufficiently large \(n\).  Hence
\[
        \mathcal Z_{u,d_*}^{\eta/2,k}
        \ge e^{-C_1n}\mathcal Z(\Omega_k).
\]
Applying \cref{lem:two-slice-comparison} once more, now comparing \(d_*\) to
\(d_q\), yields
\[
        \mathcal Z_{u,d_*}^{\eta/2,k}
        \le
        \exp\{nV(q)+C\eta n+C\log n\}
        \mathcal Z_{u,d_q}^{\eta,k}.
\]
Combining the last two displays proves \eqref{eq:fixed-slice-lower-bound}.
\end{proof}

Fix a vertex \(u\in[n]\), let
\(
        d_b=\lceil bn\rceil,
        A_n=\{X\in\Omega_k:\deg_X(u)\ge d_b\},
\)
and write
\(
        Q(A,B)=\sum_{x\in A}\mu_\beta^k(x)P(x,B)
\)
for the stationary flow of the full Kawasaki chain.

\begin{lemma}
\label{lem:defect-separation}
If \(p+3\varepsilon<b\), then, for all sufficiently large  \(n\),
\[
        \Gamma_{p,\varepsilon}\subseteq A_n^c.
\]
\end{lemma}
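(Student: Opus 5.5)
The inclusion is a purely deterministic statement. My plan is to show that membership in \(\Gamma_{p,\varepsilon}\) forces every vertex degree to lie within roughly \(2\varepsilon n\) of \(pn\). Since \(A_n\) requires \(\deg_X(u)\ge \lceil bn\rceil\) with \(b>p+3\varepsilon\), these two conditions are incompatible for large \(n\).

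The key identity comes from the definition \cref{eq:rex}. For an edge-position \(e=\{u,v\}\), the occupied edges adjacent to \(e\) are those incident to \(u\) or to \(v\), other than \(e\) itself. This gives
\[
d(\{u,v\},X)=\deg_X(u)+\deg_X(v)-2X_{uv},
\]
so
\[
r(\{u,v\},X)=\frac{\deg_X(u)+\deg_X(v)-2X_{uv}}{2n}.
\]

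To isolate a single degree, I would combine three edge-positions \(\{u,v\},\{u,w\},\{v,w\}\) for distinct vertices \(u,v,w\). Averaging the first two and subtracting half the third yields
\[
\frac{\deg_X(u)}{n}=r(\{u,v\},X)+r(\{u,w\},X)-r(\{v,w\},X)+O(n^{-1}).
\]
Here the error comes only from the \(X_{\cdot\cdot}\) terms and is at most \(3/n\) in absolute value. Assuming \(X\in\Gamma_{p,\varepsilon}\), each \(r\) lies in \([p-\varepsilon,p+\varepsilon]\), hence
\[
\deg_X(u)\le n(p+3\varepsilon)+3.
\]

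Finally, \(d_b=\lceil bn\rceil\ge bn\), and \(bn>n(p+3\varepsilon)+3\) as soon as \(n>3/(b-p-3\varepsilon)\). So every \(X\in\Gamma_{p,\varepsilon}\) has \(\deg_X(u)<d_b\), that is, \(X\in A_n^c\).

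There is no real obstacle here. The only point needing care is the bookkeeping of the \(X_{uv}\) correction terms and the requirement that \(n\ge3\), so that the auxiliary vertices \(v,w\) exist.
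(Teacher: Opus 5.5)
Your proof is correct, but it isolates $\deg_X(u)$ by a different route from the paper. You use a single triangle of edge-positions and the identity $r(\{u,v\},X)+r(\{u,w\},X)-r(\{v,w\},X)=\deg_X(u)/n-(X_{uv}+X_{uw}-X_{vw})/n$. This gives $\deg_X(u)/n\le p+3\varepsilon+2/n$ without using the fact that $X$ has exactly $k$ edges. (A wording slip: this combination is the \emph{sum} of the first two minus the third. Averaging and subtracting half would give $\deg_X(u)/(2n)$. Your displayed formula is the right one.)

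The paper uses only the edge-positions $\{u,w\}$. It sums $\deg_X(u)+\deg_X(w)\le 2n(p+\varepsilon)+2$ over all $w\neq u$ and removes $\sum_{w\ne u}\deg_X(w)$ with the fixed-edge identity $\sum_w\deg_X(w)=2k=pn(n-1)$. This gives $(n-2)\deg_X(u)\le n(n-1)(p+2\varepsilon)+O(n)$, that is, $\deg_X(u)/n\le p+2\varepsilon+O(n^{-1})$.

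What each route buys:
\begin{itemize}
\item The paper's argument uses the conditioning on the edge count and gets the sharper constant $2\varepsilon$. The hypothesis $p+3\varepsilon<b$ therefore has slack; $p+2\varepsilon<b$ would suffice.
\item Your argument is purely local and works for any graph in which all local densities lie in $[p-\varepsilon,p+\varepsilon]$, whatever its edge count. The price is a loss of one $\varepsilon$: it uses the hypothesis $b>p+3\varepsilon$ with no room to spare. This is exactly why you need the strict inequality to absorb the $O(1/n)$ term and $n>3/(b-p-3\varepsilon)$.
\end{itemize}
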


\begin{proof}
Let \(X\in\Gamma_{p,\varepsilon}\).  For every \(w\ne u\), the definition of
\(\Gamma_{p,\varepsilon}\), applied to the edge-position \(\{u,w\}\), gives
\[
        \deg_X(u)+\deg_X(w)-2X_{uw}
        \le 2n(p+\varepsilon).
\]
Consequently,
\[
        \deg_X(u)+\deg_X(w)
        \le 2n(p+\varepsilon)+2.
\]
Summing over \(w\ne u\), and using
\[
        \deg_X(u)+\sum_{w\ne u}\deg_X(w)
        =2k=pn(n-1),
\]
we obtain
\[
        (n-2)\deg_X(u)
        \le n(n-1)(p+2\varepsilon)+O(n).
\]
Thus
\[
        \frac{\deg_X(u)}{n}\le p+3\varepsilon
\]
for all sufficiently large \(n\).  Since \(p+3\varepsilon<b\), this implies
\(\deg_X(u)<d_b\), and hence \(X\notin A_n\).
\end{proof}

\begin{lemma}
\label{lem:defect-conductance}
Under the assumptions of \cref{prop:one-vertex-defect}, there exists
\(c_0>0\) such that, for all sufficiently large  \(n\),
\begin{equation}
\label{eq:defect-conductance}
        \Phi(A_n)
        :=\frac{Q(A_n,A_n^c)}{\mu_\beta^k(A_n)}
        \le e^{-c_0n}.
\end{equation}
\end{lemma}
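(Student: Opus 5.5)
The conductance bound will follow from an exit-boundary estimate. A single Kawasaki exchange changes $\deg(u)$ by at most one. So every transition from $A_n$ to $A_n^c$ starts in the boundary slice $\{\deg_X(u)=d_b\}$ and moves to degree $d_b-1$. Since $P(x,\cdot)\le 1$, this gives
\[
Q(A_n,A_n^c)\le \mu_\beta^k(\deg_X(u)=d_b).
\]
It therefore suffices to show two things: an upper bound $\mu_\beta^k(\deg_X(u)=d_b)\le e^{-nV(b)+o(n)}\mathcal Z(\Omega_k)^{-1}\mathcal Z(\Omega_k)$, and a lower bound $\mu_\beta^k(A_n)\ge e^{-nV(q)-o(n)}$. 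Their ratio is then at most $e^{-n\Delta+o(n)}$, and we take $c_0=\Delta/2$.

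For the upper bound, split the boundary slice according to cut distance. The part outside $\square_{\eta/2}^k(p)$ has $\mu_\beta^k$-mass at most $Ce^{-cn^2}$ by \cref{theo of appro}. The part inside is $\mathcal Z^{\eta/2,k}_{u,d_b}$. We apply \cref{lem:two-slice-comparison} with $q\to d_b/n$ (whose floor issue is absorbed by continuity of $V$, costing $O(1)$) and target $q'=p$, that is, degree $d_*$, taking $J$ to be a small interval around $p$. This gives
\[
\mathcal Z^{\eta/2,k}_{u,d_b}\le \exp\{-nV(b)+C\eta n+C\log n\}\,\mathcal Z(\Omega_k),
\]
because $V(p)=0$ and $\mathcal Z^{\eta,k}_{u,d_*}\le \mathcal Z(\Omega_k)$.

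For the lower bound, $A_n$ contains the slice $\{\deg(u)=\lfloor qn\rfloor\}$, since $q>b$. We need the sharper estimate $\mu_\beta^k(\deg(u)=\lfloor qn\rfloor)\ge e^{-nV(q)-C\eta n-C\log n}$, not merely the $e^{-Cn}$ of \cref{lem:fixed-slice-lower-bound}. We obtain it by rerunning the proof of that lemma while tracking the constant. First, as shown there, $\mathcal Z^{\eta/2,k}_{u,d_*}\ge n^{-1}e^{-C\eta n-C\log n}\mathcal Z(\square_{\eta/4}^k(p))\ge e^{-C\eta n-C\log n}\mathcal Z(\Omega_k)$. Here the only loss is $C\eta n$, because \cref{lem:two-slice-comparison} with $V\ge 0$ gives a uniform $e^{C\eta n}$ bound for every slice. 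Second, we apply \cref{lem:two-slice-comparison} with $d=d_*$ and $d'=\lfloor qn\rfloor$, so that $q'=q\in J$ for a compact $J\ni q$. This gives $\mathcal Z^{\eta/2,k}_{u,d_*}\le e^{n[V(q)-V(p)]+C\eta n+C\log n}\mathcal Z^{\eta,k}_{u,\lfloor qn\rfloor}$. Combining the two yields $\mu_\beta^k(A_n)\ge e^{-nV(q)-2C\eta n-2C\log n}$.

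Putting the pieces together,
\[
\Phi(A_n)\le e^{-n(V(b)-V(q))+3C\eta n+3C\log n}+Ce^{-cn^2}e^{nV(q)+o(n)}.
\]
We choose $\eta$ small enough that $3C\eta<\Delta/4$, and then $n$ large, which gives $\Phi(A_n)\le e^{-\Delta n/2}$. The main obstacle is ensuring that the constant $C$ in \cref{lem:two-slice-comparison} is uniform over the two uses. In the upper-bound use the starting degree is $b$ and the target is near $p$; in the lower-bound use the target is $q\in(0,1)$. Both targets lie in a fixed compact $J\subset(0,1)$, so a single $C(J,p,\beta)$ works, and $\eta$ can be fixed after $C$. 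One should also check that $q<1$ makes $\lfloor qn\rfloor$ feasible, i.e.\ $0\le k-d\le N_{Bulk}$, which holds because $k\asymp pn^2/2$.
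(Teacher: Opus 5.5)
Your proof is correct. It uses the same ingredients as the paper: the boundary-slice bound $Q(A_n,A_n^c)\le\mu_\beta^k(\deg_X(u)=d_b)$, \cref{lem:two-slice-comparison}, and \cref{theo of appro} for the complement of the cut ball. What differs is how you organize the comparison.

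The paper applies \cref{lem:two-slice-comparison} once, directly from the boundary degree $d_b$ to the interior degree $d_q=\lfloor qn\rfloor$. This gives $\mathcal Z^{\eta/2,k}_{u,d_b}/\mathcal Z^{\eta,k}_{u,d_q}\le e^{-n\Delta+C\eta n+C\log n}$ in a single step. \cref{lem:fixed-slice-lower-bound}, with its crude $e^{-Cn}$, is then used only to show that the cut-complement term $\mathcal Z(\Omega_k)\,\mu_\beta^k\bigl((\square_{\eta/2}^k(p))^c\bigr)/\mathcal Z^{\eta,k}_{u,d_q}$ is at most $e^{-c_\eta n^2+C_qn}$. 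There any exponential-in-$n$ loss is harmless.

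You instead bound the numerator and the denominator separately against $\mathcal Z(\Omega_k)$, passing through the reference degree $d_*$. This costs you three applications of the comparison lemma. It also requires re-running the proof of \cref{lem:fixed-slice-lower-bound} with the constant tracked, giving exponent $-nV(q)-O(\eta n+\log n)$ instead of $-Cn$. Your ``$o(n)$'' errors are really $O(\eta n+\log n)$, and you absorb them correctly by fixing $\eta$ small at the end.

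In exchange, your route gives a sharper two-sided estimate: $\mu_\beta^k(\deg_X(u)=\lfloor rn\rfloor)=\exp\{-nV(r)+O(\eta n+\log n)\}$ for fixed $r\in(0,1)$. Hence $n^{-1}\log\mu_\beta^k(\deg_X(u)=\lfloor rn\rfloor)\to-V(r)$, and the conductance bound follows immediately. The paper's direct comparison is shorter but does not record this.

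Your uniformity concern is resolved as you say. Take a compact $J\subset(0,1)$ containing $q$ and a neighbourhood of $p$; \cref{lem:fixed-slice-lower-bound} already needs exactly this.
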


\begin{proof}
A single Kawasaki move changes \(\deg_X(u)\) by at most one.  Therefore, if
\(x\in A_n\) and \(P(x,A_n^c)>0\), then \(\deg_x(u)=d_b\).  Hence
\[
        Q(A_n,A_n^c)
        \le \mu_\beta^k\bigl(\deg_X(u)=d_b\bigr).
\]
Let \(d_q:=\lfloor qn\rfloor\).  Since \(q>b\), the slice
\(\{\deg_X(u)=d_q\}\) is contained in \(A_n\) for all sufficiently large
\(n\).  For a fixed sufficiently small \(\eta>0\), it follows that
\[
\begin{aligned}
        \Phi(A_n)
        &\le
        \frac{
        \mathcal Z_{u,d_b}^{\eta/2,k}
        +\mathcal Z(\Omega_k)
        \mu_\beta^k\bigl((\square_{\eta/2}^k(p))^c\bigr)
        }{
        \mathcal Z_{u,d_q}^{\eta,k}
        }.
\end{aligned}
\]
By \cref{lem:two-slice-comparison}, and after absorbing the rounding of
\(bn\) and \(qn\) into the error term,
\[
        \frac{\mathcal Z_{u,d_b}^{\eta/2,k}}
        {\mathcal Z_{u,d_q}^{\eta,k}}
        \le
        \exp\{-n\Delta+C\eta n+C\log n\}.
\]
Moreover, by \cref{prop:1,theo of appro,lem:fixed-slice-lower-bound},
\[
        \frac{
        \mathcal Z(\Omega_k)
        \mu_\beta^k\bigl((\square_{\eta/2}^k(p))^c\bigr)
        }{
        \mathcal Z_{u,d_q}^{\eta,k}
        }
        \le
        \exp\{-c_\eta n^2+C_qn\}.
\]
Choose \(\eta>0\) so small that \(C\eta\le\Delta/4\).  The two preceding
bounds then imply \eqref{eq:defect-conductance} after decreasing \(c_0\).
\end{proof}

The following elementary lemma converts small conductance into a lower bound on
the exit time.  We include the short proof for completeness.

\begin{lemma}
\label{lem:exit-small-conductance}
Let \(P\) be a reversible Markov kernel on a finite state space with stationary
law \(\pi\), and let \(A\) satisfy \(\pi(A)>0\).  If the chain starts from
\(\pi_A:=\pi(\,\cdot\mid A)\), then, for every integer \(T\ge1\),
\begin{equation}
\label{eq:exit-small-conductance}
        \mathbb P_{\pi_A}(\tau_{A^c}\le T)
        \le
        T\frac{Q(A,A^c)}{\pi(A)},
\end{equation}
where
\(
        Q(A,A^c)=\sum_{x\in A}\pi(x)P(x,A^c).
\)
\end{lemma}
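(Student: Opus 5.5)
The plan is a union bound over the first $T$ steps, using stationarity of the chain started from $\pi$ restricted to $A$. Let $(X_t)$ be the chain with $X_0\sim\pi_A$. On the event $\{\tau_{A^c}\le T\}$, the chain starts in $A$, so $\tau_{A^c}\ge1$. Hence there is a first time $1\le t\le T$ with $X_{t-1}\in A$ and $X_t\in A^c$. This gives
\[
\{\tau_{A^c}\le T\}\subseteq\bigcup_{t=1}^T\{X_{t-1}\in A,\ X_t\in A^c\},
\]
and therefore
\[
\mathbb P_{\pi_A}(\tau_{A^c}\le T)\le\sum_{t=1}^T\mathbb P_{\pi_A}(X_{t-1}\in A,\ X_t\in A^c).
\]

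The key step is to bound each summand by the stationary flow. I would use the pointwise domination $\pi_A(x)\le\pi(x)/\pi(A)$ for all $x$, with equality on $A$ and zero off $A$. By linearity of the Markov evolution, the law of $X_{t-1}$ under $\pi_A$ is then dominated by $\pi P^{t-1}/\pi(A)=\pi/\pi(A)$, using stationarity. It follows that
\[
\mathbb P_{\pi_A}(X_{t-1}\in A,X_t\in A^c)=\sum_{x\in A}\mathbb P_{\pi_A}(X_{t-1}=x)P(x,A^c)\le\frac{1}{\pi(A)}\sum_{x\in A}\pi(x)P(x,A^c)=\frac{Q(A,A^c)}{\pi(A)}.
\]
Summing over $t=1,\dots,T$ gives \eqref{eq:exit-small-conductance}.

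None of the steps is a real obstacle. The only point needing care is the domination of the time-$(t-1)$ law by $\pi/\pi(A)$, which relies on nonnegativity of $P$ and on stationarity of $\pi$. Reversibility is not needed for this bound; it matters only later, when the estimate is applied together with the conductance bound \cref{lem:defect-conductance}.
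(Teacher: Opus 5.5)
Your proof is correct and is essentially the paper's argument: both propagate the pointwise domination by \(\pi/\pi(A)\) through time and sum the one-step exit flows over \(t\le T\). The only differences are that the paper dominates the killed sub-distribution \(\nu_t(y)=\mathbb P_{\pi_A}(X_t=y,\ \tau_{A^c}>t)\), which gives an exact first-exit decomposition rather than your union bound, and it invokes reversibility in the induction step, whereas you dominate the full marginal using only stationarity. Your version thereby confirms your remark that reversibility is not actually needed.
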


\begin{proof}
For \(t\ge0\), define
\[
        \nu_t(y):=
        \mathbb P_{\pi_A}(X_t=y,\ \tau_{A^c}>t).
\]
We claim that
\[
        \nu_t(y)
        \le \frac{\pi(y)}{\pi(A)}\mathbf 1_A(y)
\]
for every \(t\ge0\).  This is an equality at \(t=0\).  If it holds at time
\(t\), reversibility gives
\[
\begin{aligned}
        \nu_{t+1}(y)
        &=
        \mathbf 1_A(y)\sum_{x\in A}\nu_t(x)P(x,y)  \le
        \frac{\mathbf 1_A(y)}{\pi(A)}
        \sum_{x\in A}\pi(x)P(x,y) =
        \frac{\pi(y)}{\pi(A)}\mathbf 1_A(y)P(y,A)
        \le
        \frac{\pi(y)}{\pi(A)}\mathbf 1_A(y).
\end{aligned}
\]
Thus the claim follows by induction.  Consequently,
\[
        \mathbb P_{\pi_A}(\tau_{A^c}=t+1)
        =
        \sum_{x\in A}\nu_t(x)P(x,A^c)
        \le
        \frac{Q(A,A^c)}{\pi(A)}.
\]
Summing over \(t=0,\ldots,T-1\) proves
\eqref{eq:exit-small-conductance}.
\end{proof}

\begin{proof}[Proof of \cref{prop:one-vertex-defect}]
By \cref{lem:defect-conductance},
\(
        \Phi(A_n)\le e^{-c_0n}
\)
for all sufficiently large \(n\).  Apply
\cref{lem:exit-small-conductance} with
\(
        T_n=\left\lfloor e^{c_0n/2}\right\rfloor.
\)
Then
\[
        \mathbb P_{\mu_\beta^k(\cdot\mid A_n)}
        (\tau_{A_n^c}\le T_n)
        \le e^{-c_0n/2}.
\]
By \cref{lem:defect-separation},
\(\Gamma_{p,\varepsilon}\subseteq A_n^c\).  Hence
\[
        \{\tau_{\Gamma_{p,\varepsilon}}\le T_n\}
        \subseteq
        \{\tau_{A_n^c}\le T_n\}.
\]
Averaging over the initial state under
\(\mu_\beta^k(\cdot\mid A_n)\), we conclude that there exists a deterministic
\(x_n\in A_n\) such that
\[
        \mathbb P_{x_n}(\tau_{A_n^c}\le T_n)
        \le e^{-c_0n/2}.
\]
Since \(\Gamma_{p,\varepsilon}\subseteq A_n^c\), the same initial state
satisfies
\[
        \mathbb P_{x_n}
        (\tau_{\Gamma_{p,\varepsilon}}\le T_n)
        \le e^{-c_0n/2}.
\]
In particular, \(x_n\notin\Gamma_{p,\varepsilon}\), and, for every
\(0\le t\le T_n\),
\[
        x_nP^t(A_n)
        \ge 1-e^{-c_0n/2}.
\]
On the other hand, since \(A_n\subseteq\Gamma_{p,\varepsilon}^c\),
\cref{prop:1,theo of appro,le-1} gives
\(
        \mu_\beta^k(A_n)
        \le Ce^{-c_1n}.
\)
Therefore
\[
\begin{aligned}
        d_{\mathrm{TV}}(x_nP^t,\mu_\beta^k)
        &\ge
        x_nP^t(A_n)-\mu_\beta^k(A_n) \ge
        1-e^{-c_0n/2}-Ce^{-c_1n}.
\end{aligned}
\]
Finally,
\[
        \mathbb E_{x_n}\tau_{\Gamma_{p,\varepsilon}}
        \ge
        T_n\,
        \mathbb P_{x_n}
        (\tau_{\Gamma_{p,\varepsilon}}>T_n).
\]
After decreasing the constant \(c\), all the assertions follow.
\end{proof}

We finish by giving an explicit parameter choice for which the contraction
inside the regular region and the one-vertex obstruction coexist.

\begin{corollary}
\label{cor:explicit-one-vertex-defect}
Let
\[
        p=0.01,
        \qquad
        \beta=4.
\]
Then the assumptions of \cref{coupling} hold.  Nevertheless, for every
sufficiently small fixed \(\varepsilon>0\), along all sufficiently large
admissible values of \(n\) there are deterministic initial configurations for
which the total-variation mixing time is exponential in \(n\).
\end{corollary}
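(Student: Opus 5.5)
The plan is to reduce the corollary to \cref{prop:one-vertex-defect} and to the standing hypotheses of \cref{coupling}. Both reduce to explicit numerical checks at the single point $(p,\beta)=(0.01,4)$. Throughout I take the compact set to be the singleton $B=K=\{(0.01,4)\}$.

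\textbf{Step 1: the assumptions of \cref{coupling}.} It suffices to show that $(0.01,4)\in\mathscr R_{\mathrm{Kaw}}$, i.e.\ that
\[
\beta<c_p \qquad\text{and}\qquad 4\beta p(1-p)<\tfrac12 .
\]
For the first inequality, $c_{0.01}=\log(99)/0.98$. Since $\log 99>4.59$, this gives $c_{0.01}>4.68>4$. For the second, $4\cdot 4\cdot 0.01\cdot 0.99=0.1584<1/2$. Because $B$ is a singleton, the margin $\kappa_B$ in \cref{eq:kappaB} can be taken to be $\min\{0.01,\,c_p-4,\,1/2-0.1584\}>0$. Hence \cref{coupling} applies.

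\textbf{Step 2: the hypothesis of \cref{prop:one-vertex-defect}.} Set
\[
V(q)=D(q\|0.01)-4(q-0.01)^2 .
\]
We need points $p<b<q<1$ with $V(b)>V(q)$. The derivative is
\[
V'(q)=\log\frac{q}{1-q}+\log 99-8(q-0.01).
\]
It is positive at $q=1/2$ (about $0.68$), negative at $q=0.9$ (about $-0.33$), and positive again at $q=0.99$ (about $1.35$). So $V$ has a local maximum in $(1/2,0.9)$ followed by a local minimum in $(0.9,0.99)$; this is the double-well structure behind the counterexample.

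I will take $b=0.8$ and $q=0.95$. Direct evaluation gives
\[
V(0.8)=0.8\log 80+0.2\log\frac{0.2}{0.99}-4(0.79)^2\approx 3.186-2.496\approx 0.689,
\]
\[
V(0.95)=0.95\log 95+0.05\log\frac{0.05}{0.99}-4(0.94)^2\approx 4.177-3.534\approx 0.643 .
\]
Thus $\Delta=V(0.8)-V(0.95)\approx 0.046>0$.

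\textbf{Main obstacle.} The only delicate point is making these numerics rigorous, since the gap $\Delta$ is small. I would bound each logarithm by elementary two-sided rational bounds, for example $4.382<\log 80<4.3821$, $4.553<\log 95<4.5539$, and similar bounds for $\log(0.2/0.99)$ and $\log(0.05/0.99)$. The accumulated error is then of order $10^{-3}$, well below $0.046$.

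\textbf{Step 3: conclusion.} Fix any $\varepsilon>0$ small enough that $\varepsilon\le\varepsilon_0(B)$, that $0.01+3\varepsilon<0.8$, and that $\varepsilon$ is admissible in \cref{prop:one-vertex-defect}. That proposition then produces, for every sufficiently large admissible $n$, deterministic states $x_n\notin\Gamma_{p,\varepsilon}$ such that
\[
d_{\mathrm{TV}}(x_nP^t,\mu_\beta^k)\ge 1-Ce^{-cn}\qquad\text{for all } t\le e^{cn}.
\]
For large $n$ the right-hand side exceeds $1/4$. Hence the total-variation mixing time from $x_n$ is at least $e^{cn}$, which is exponential in $n$.
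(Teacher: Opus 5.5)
Your proposal is correct and follows essentially the same route as the paper: you check $(0.01,4)\in\mathscr R_{\mathrm{Kaw}}$ via $c_{0.01}=\log 99/0.98\approx 4.689>4$ and $4\beta p(1-p)=0.1584<1/2$, then take $b=0.8$, $q=0.95$ with $V(0.8)-V(0.95)\approx 0.047>0$ and invoke \cref{prop:one-vertex-defect}. Your derivative analysis of $V$ and your plan for rigorous two-sided logarithm bounds are correct additions; the paper records only the numerical values.
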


\begin{proof}
First,
\[
        c_{p}
        =
        \frac{\log(p/(1-p))}{2p-1}
        \approx 4.688897806>4,
\]
so \(\beta<c_{p}\).  Moreover,
\[
        4\beta p(1-p)=0.1584<\frac12.
\]
Thus the conditional model is in the replica-symmetric region and the
regular-region contraction condition in \cref{coupling} is satisfied.

For the effective potential
\(
        V(r)=\Dkl(r\|0.01)-4(r-0.01)^2,
\)
take
\(
        b=0.8,
        q=0.95.
\)
A direct calculation gives
\[
        V(0.8)\approx0.6893437924,
        \qquad
        V(0.95)\approx0.6424989501,
\]
and hence
\[
        V(0.8)-V(0.95)
        \approx0.0468448423>0.
\]
The conclusion follows from \cref{prop:one-vertex-defect}.
\end{proof}

%\begin{remark}
%The obstruction is invisible at the graphon large deviation scale.  Changing the row and column incident to a single vertex modifies only \(O(n)\) edge indicators, and therefore changes the cut distance by at most \(O(n^{-1})\). It is nevertheless detected by \(\Gamma_{p,\varepsilon}\), which imposes a uniform local regularity condition on every edge-position.  Thus \cref{coupling} is genuinely a regular-basin, or metastable, mixing result and cannot in general be upgraded to a polynomial worst-case mixing bound.
%\end{remark}

\end{appendices}

\end{document}